\numberwithin{equation}{section}
\newcommand{\bpi}{\boldsymbol{\pi}}
\newcommand{\X}{{X}}
\newcommand{\bx}{{x}}
\newcommand{\E}{\mathbb{E}}
\newcommand{\R}{\mathbb{R}}
\newcommand{\Z}{\mathbb{Z}}
\newcommand{\N}{\mathbb{N}}
\DeclareMathOperator*{\argmin}{argmin}
\newcommand{\EE}{\mathbb{E}}
\newcommand{\PP}{\mathbb{P}}
\newcommand{\NN}{\mathbb{N}}
\newcommand{\eps}{ \varepsilon}
\newcommand{\bX}{{X}}
\newcommand{\by}{{y}}
\newcommand{\mykill}[1]{}
\newcommand\independent{\protect\mathpalette{\protect\independenT}{\perp}}
\def\independenT#1#2{\mathrel{\rlap{$#1#2$}\mkern2mu{#1#2}}}
\theoremstyle{plain}
\newtheorem{theorem}{Theorem}[section]
\newtheorem{lemma}{Lemma}[section]
\newtheorem{corollary}[theorem]{Corollary}
\theoremstyle{definition}
\newtheorem{definition}{Definition}[section]
\newtheorem{remark}{Remark}[section]
\newtheorem{assumption}{Assumption}[section]
\theoremstyle{remark}
\newlist{myenum}{enumerate}{3}
\setlist[myenum,1]{label={\rm (H\arabic*)},
                   ref  ={\rm (H\arabic*)}}
\crefname{myenumi}{property}{properties}
\title{ $\,$\vspace{-12mm}
\\ Nonparametric Vector Quantile Autoregression}
\date{\nodate}
\author{Alberto Gonz{\'a}lez-Sanz%
  \thanks{Department of Statistics, Columbia University, New York, USA <ag4855@columbia.edu>.} \and  Marc Hallin \thanks{D\' epartement de Mat\' ematique, Universit\' e libre de Bruxelles, Brussels, Belgium <mhallin@ulb.be> and Institute of Information Theory and Automation,  Czech Academy of Sciences, Prague, Czech Republic. } \and  Yisha Yao
  \thanks{Departments of Statistics, Columbia University, New York, USA <yy3381@columbia.edu>.} }
  \date{}
\begin{document}

\maketitle

$\,$\vspace{-4mm}

\begin{abstract}
Prediction is a key issue in time series analysis. Just as classical mean regression models, classical autoregressive  methods, yielding  L$^2$ point-predictions, provide rather poor predictive summaries; a much more informative approach is based on quantile  (auto)regression, where the whole distribution of future observations conditional on the past is consistently recovered. Since their introduction by Koenker and Xiao in 2006, autoregressive quantile autoregression methods have become a popular and successful alternative  to the traditional  L$^2$ ones. Due to the lack of a widely accepted concept of multivariate quantiles, however, quantile autoregression methods so far have been  limited to univariate time series. Building upon recent measure-transportation-based concepts of multivariate quantiles, we develop here a nonparametric vector quantile autoregressive approach to the analysis and prediction of (nonlinear as well as linear) multivariate time series.  
    \end{abstract}

 %\vspace{1em}
%\vspace{-1.5em}
{\small
\noindent \emph{Keywords}
Vector autoregression; Conditional multivariate quantiles; Multivariate quantile prediction; Measure transportation.\bigskip

\noindent \emph{AMS 2020 Subject Classification}
62M10, 62M20, 62P20, 62P10.

}
 \vspace{0em}

\section{Introduction}
Classical time series analysis is firmly rooted in an L$^2$ approach and the linear geometry of the corresponding Hilbert spaces. That L$^2$ approach involves linear filters, linear ARMA, VAR, and VARMA models, second-order white noise innovation processes, and linear optimal point predictors minimizing expected quadratic prediction errors. It has, however,  
 two severe limitations:  it only deals with second-order dependencies and linear dynamics, and only yields point predictors of future values. 
%\end{enumerate}

Real-world data provide overwhelming evidence of nonlinear dynamics, and significant effort has been invested in modelling, estimating, and predicting nonlinear processes. The literature on nonlinear techniques in time series is extensive and still growing---see, e.g., \cite{Fan.2005.Book.TimeSeries} for a classical monograph---but it largely adheres to the same optimal point prediction paradigm as the classical approach.
%he literature on nonlinear techniques in time series---see, e.g., \cite{Fan.2005.Book.TimeSeries} for a classical monograph---is huge and still growing, but mostly conforms to the same optimal point predictor paradigm as the classical approach. 

Point predictors, just as point estimators of conditional means in regression analysis, are %well known for
 providing poor summaries of future observations, and fail to exploit the full predictive information carried by the observed past. A remedy to this, in linear regression, was proposed in  the pathbreaking paper by \cite{KoenkerBassett78} with the introduction of the concept of {\it quantile regression}. Contrary to the classical point estimators of conditional means, quantile regression yields  consistent estimations of all the conditional quantiles of the response, hence of its entire conditional distribution. That appealing property of quantile regression was extended by \cite{koenker2006quantile}  to {\it quantile {\em auto}regression} and, since then, quantile autoregressive (QAR)  models have become a standard tool in time series econometrics as a powerful alternative to traditional AR models. 

While the original contributions by \cite{KoenkerBassett78}, \cite{Koul1995},  and \cite{koenker2006quantile} still involve some form of linearity,  they subsequently have been extended (see, e.g., \cite{Mukherjee1999, Cai_2002, koenker2005quantile, QU2015, koenker2017quantile, Handbook2017}) to more general  settings   with  nonlinear regression or autoregression---including  extensions to  Bayesian techniques, survival analysis,    instrumental variables,   high-dimensional and Banach-valued response,  cointegrated series, etc.\   (see Chapters 4, 7, 9, 14, 15, 17   of the Handbook volume \cite{Handbook2017} for references).  %. Whether linear or nonlinear, parametric or fully nonparametric, 
The so-called nonparametric QAR   models, thus, allow for the consistent estimation of the conditional distribution of future observations without any specification of innovation densities nor analytical constraints on the form of conditional heterogeneity and~AR serial dependence.   They have been widely applied in a variety of forecasting and learning problems (see, for instance,~\cite{cheung2024quantiledeeplearningmodels}) and attracted considerable interest in financial econometrics, with the evaluation of Values at Risk and Expected Shortfalls, and the popular CAViaR models \citep{CAVIAR}.

%QAR models have been extensively studied from both theoretical %(see~\cite{Handbook2017}) 
%and applied perspectives; they are now widely used in time series  forecasting and learning, 
% (see, for example,~\cite{cheung2024quantiledeeplearningmodels}),

However, the concept of quantile being based on the natural ordering of the real line, quantile regression and quantile autoregression so far  remain inherently restricted to univariate settings---single-output regression and univariate QAR models \citep{koenker2006quantile}, univariate  portfolio returns \citep{CAVIAR}, or linear vector autoregressive models involving vectors of (univariate) marginal quantiles \citep{Manganelli2023}, among many others. Due to the lack of  a canonical ordering   of ${\mathbb R}^d$ for~$d>1$,  genuinely multivariate quantile  concepts and quantile-based techniques for multiple-output regression and VAR models are more delicate.  Some interesting attempts  have been made---see \cite{ChaouchSaraccco2009}, \cite{HlubinkaSiman2013, HlubinkaSiman2015}, \cite{HallinSiman2016}, 
\cite{hallin2010multivariate}, \cite{HallinPdSiman2} %---see the survey by \cite{HallinSimanHandbook}---
 for multiple-output quantile regression,  \cite{GiannoneQVAR}, \cite{BayesQVAR2023} for   quantile vector autoregression.  However, as explained in~\cite{HallinSimanHandbook} and \cite{Barrio2022NonparametricMC},  none of these attempts (many of them based on the vector of univariate marginal quantiles  involves a genuinely multivariate and fully satisfactory concept of quantile. 
 
 Using the measure-transportation-based concept of {\it center-outward ranks} and {\it quantiles} introduced  by~\cite{chernoetal17} and \cite{Hallin2020DistributionAQ}, \cite{ Barrio2022NonparametricMC} have developed a multiple-output version of nonparametric  quantile regression that matches all the properties expected in a quantile regression approach: closed nested conditional quantile regions and contours, exact conditional coverage level irrespective of the underlying densities, etc. An earlier paper \citep{Carlier.Chernoz.Galichon.2016.AoS} had proposed a related  measure-transportation-based method for {\it linear} vector quantile regression which, however, does not lead to  the construction  of  conditional quantile regions and contours.   Based on the dual (in the sense of Kantorovich duality) concept of  {\it center-outward ranks},  rank-based testing and R-estimation for  linear VAR models with unspecified innovation densities have been developed in \cite{HLLBern, HLLJASA} and \cite{HLPortm}.   
 
 The objective of this paper is to propose a genuinely multivariate version (QVAR) of nonparametric univariate QAR  models based on the  concept of multivariate {\it center-outward quantiles} as introduced by \cite{Hallin2020DistributionAQ}.   Specifically, we   construct estimators of the predictive  $d$-dimensional  distribution---the conditional distribution at time $(t+1)$  of the variable under study given the observations up to time $t$. These estimators take the form of a collection of predictive center-outward quantile regions with a.s.\ conditional coverage probability $\tau\in [0,1]$, with obvious applications, e.g., in the prediction of multivariate value-at-risk or expected shortfall. Contrary to the depth-based concept considered in \cite{HallinPdSiman2} and the spatial or geometric quantiles introduced by \cite{chaudhuri1996geometric}  and \cite{Chowdhury19},  center-outward quantiles and the related ranks and signs enjoy (under absolutely continuity) all the properties expected from such notions. In particular, the predictive center-outward quantiles proposed in this paper  fully characterize the underlying (conditional) distributions,  yield  quantile regions with %, irrespective  of that  distribution,
  exact (conditional) coverage probability, and define center-outward ranks and signs that are distribution-free and maximal ancillary: see \cite{Hallin2020DistributionAQ} and its online supplement for details and a discussion of these properties. $\,$\vspace{-4mm}

\paragraph{Outline of the paper.} The  paper is organized as follows. Section~2  deals with the popu\-lation concepts of multivariate conditional quantiles and predictive quantile regions for stationary nonparametric VAR processes (Markov processes of order $p$). Section~3 proposes empirical counterparts of these concepts, then studies their consistency 
 and consistency rates.  Section~4 provides simulation-based numerical results and a real-data application. All proofs are postponed to an online appendix.

\section{Center-Outward Quantiles}\label{Section:quantiles}
\subsection{Quantile functions}
Let $\mu_d$ denote the spherical uniform distribution  over the unit ball $\mathbb{B}^d\coloneqq \{u\in \R^d: \|u\|<1\}$ in ${\mathbb R}^d$---that is, the distribution of the random vector $U\coloneqq R \sigma$, where $R$ and $\sigma$ are mutually independent, $R$ is uniformly distributed over $[0,1]$, and $\sigma$ uniformly distributed on the unit sphere $\mathcal{S}^{d-1}\coloneqq \{u\in \R^d:\|u\|=1\}$.  \cite{Hallin2020DistributionAQ} define  the {\it center-outward quantile function} of a probability distribution $\rm P$ in the family $\mathcal{P}(\R^d)%\coloneqq\{{\rm P}\vert {\rm P}\ll\ell_d \}
$ of Lebesgue-absolutely continuous probability measures over ${\mathbb R}^d$ as follows.

\begin{definition}
    The {\it center-outward quantile function} $\mathbf{Q}_\pm$ of   ${\rm P}\in\mathcal{P}(\R^d)$ is the $\mu_d$-a.s.~unique gradient $\mathbf{Q}_\pm=\nabla\varphi $ of a convex function $\varphi$ %such that $\mathbf{Q}_\pm$
     pushing $\mu_d$ forward to $P$. 
\end{definition}
\noindent This definition  is based on a  famous theorem  by McCann \citep{McCann}, which guarantees the existence and $\mu_d$-a.s.~uniqueness of $\mathbf{Q}_\pm$.

%Recall that the existence and uniqueness follows from McCann's theorem (see \cite{McCann}) and the fact that $\mu_d$ is absolutely continuous w.r.t.~the $d$-dimensional Lebesgue measure $\ell_d$ (i.e., $\mu_d\ll\ell_d$). 

The mapping $\mathbf{Q}_\pm=\nabla\varphi$, however, is only a.e.~defined  in the open unit ball $\mathbb{B}^d$. It is easily extended via the sub-gradient 
\[ \mathbb{B}^d \ni u\mapsto 
\partial \varphi(u) \coloneqq \left\{ x \in \mathbb{R}^d : \varphi(v) \geq \varphi(u) + \langle x, v - u \rangle \quad \text{for all } v \in \mathbb{B}^d \right\}
\]
which, for a convex  $\varphi$, is a maximal monotone set-valued mapping. Refer to~$\mathbb{Q}_\pm\coloneqq \partial \varphi$ as the {\it set-valued quantile mapping of} $\rm P$.
Since the support of $\mu_d$ is connected and~$ \varphi$ is the unique (up to additive constants) convex function such that $\mathbf{Q}_\pm=\nabla\varphi$,  the set-valued quantile mapping~$\mathbb{Q}_\pm$ of $\rm P$ is uniquely defined on $\mathbb{B}^d$.

\subsection{Conditional quantile functions}\label{Section:quantiles-cond}
In this section, we introduce  conditional center-outward quantile functions as  set-valued~opera\-tors.  In the univariate setting, \cite{Castro.2023.Quantiles.BJ} recently considered a similar approach. 

Let  $X$, with values in $({\R}^d, {\mathcal B}^d)$ ($ {\mathcal B}^d$ the Borel sigma-field on $\R^d$), be defined on some probability space $(\Omega, {\mathcal A}, {\mathbb P})$. Denoting by ${\rm P}\coloneqq {\mathbb P}\circ X^{-1}$ its  distribution, assume that ${\rm P}\in~\!\mathcal{P}(\R^d)$.  %Consider     a (sub)-$\sigma$-field $\mathcal{G}\subseteq \mathcal{A}$ of $\mathcal{A}$.
 Recall that the conditional probability distribution~$\mathbb{P}_{\X\vert \mathcal{G} }$ of $ \X $ given the (sub)-$\sigma$-field $\mathcal{G}\subseteq \mathcal{A}$ of $\mathcal{A}$ is defined as the unique (up to a set of $\omega$ values contained in a set $A\in{\mathcal A}$ of $\mathbb P$-probability zero) function~$\mathbb{P}_{\X\vert \mathcal{G} }:\mathcal{B}^d\times \Omega\to [0,1]$ such that \smallskip

\begin{compactitem}
    \item[--] for any $\omega\in \Omega$, the map
    $  \mathcal{B}^d \ni B\mapsto \mathbb{P}_{\X\vert \mathcal{G} }(B,\omega) $
    is a probability measure on $(\mathbb{R}^d,  \mathcal{B}^d)$,
    \item[--] for any $B\in \mathcal{B}^d $, the map 
    $ \Omega\ni \omega \mapsto \mathbb{P}_{\X\vert \mathcal{G} }(B,\omega) $
    is $\mathcal{G}$-measurable  and     satisfies the functional equation
$\int_G \mathbb{P}_{\X\vert \mathcal{G} }(B, \omega) {\rm d}\mathbb{P}(\omega)=\mathbb{P}(\{\X\in B\}\cap G)$ for all $G\in \mathcal{G}$ and $B\in \mathcal{B}^d$. 
\end{compactitem}

\begin{definition}\label{definition:setcond}
     The {\it set-valued center-outward quantile map}  of $\X$ conditional on $\mathcal{G}$ is the unique map $ \mathbb{B}^d\times \Omega  \ni (u,\omega) \mapsto \mathbb{Q}_{ X\vert \mathcal{G} }(u| \omega) \in 2^{\R^d}$ 
such that, for every $ \omega\in \Omega $,  $\mathbb{Q}_{\X\vert \mathcal{G} }(\cdot,  \omega)$ is the set-valued quantile mapping of   $ \mathbb{P}_{X\vert \mathcal{G} }(\cdot,\omega).$  Call {\it set-valued center-outward distribution map} of~$\X$ conditional on $\mathcal{G}$ 
the mapping 
 $ (y, \omega)\mapsto \mathbb{F}_{\X\vert \mathcal{G} }(y,  \omega) \coloneqq  \{ u\in \mathbb{B}^d:\, x\in \mathbb{Q}_{\X\vert \mathcal{G} }(u,  \omega) \}.$
\end{definition}
Denote by $\mathcal{B}(\mathcal{U})$ the Borel $\sigma$-field of a  Polish space $\mathcal{U}$. The following result shows that~$\mathbb{Q}_{\X\vert \mathcal{G} }(\cdot,  \cdot)$ and $\mathbb{F}_{\X\vert \mathcal{G} }(\cdot,  \cdot)$ are $\mathcal{G}\otimes \mathcal{B}(\mathbb{B}^d)$- and $\mathcal{G}\otimes \mathcal{B}^d$-measurable, respectively, where~$\otimes$ stands for the  product of $\sigma$-fields. 
Recall from \cite[chapter~14]{rockafellar2009variational} that a set-valued mapping $\mathbb{M}:\Omega\to 2^{\R^d}$, where $(\Omega,\mathcal{A})$ is a measurable space, is $\mathcal{A}$-measurable if \vspace{-1mm}
$$\mathbb{M}^{-1}(A)\coloneqq \{ \omega\in \Omega:\ \mathbb{M}(\omega) \cap A\neq \emptyset \} \in \mathcal{A}\vspace{-1mm} \quad\text{for any open or closed $A\subset \R^d$.}$$ 

\begin{lemma}\label{lemma:set-valued-prediction-measure}
    Let $(\Omega, \mathcal{A}, \mathbb{P})$ be a probability space and denote by $ \mathcal{G}%\subseteq \mathcal{A}
    $  a sub-$\sigma$-field of $\mathcal A$. Then,  
%    \begin{compactenum}
%        \item
      (i)   $\mathbb{Q}_{ X\vert \mathcal{G} }$ is $\mathcal{G}\otimes \mathcal{B}(\mathbb{B}^d)$-measurable  and 
%        \item
     (ii)    $\mathbb{F}_{ X\vert \mathcal{G} }$ is $\mathcal{G}\otimes \mathcal{B}^d$-measurable. 
%    \end{compactenum}
\end{lemma}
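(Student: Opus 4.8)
The plan is to reduce both assertions to the measurable-selection theory for set-valued maps of \cite[Ch.~14]{rockafellar2009variational}, by writing $\mathbb{Q}_{X\vert\mathcal{G}}$ and $\mathbb{F}_{X\vert\mathcal{G}}$ as $\argmin$-multifunctions of convex normal integrands built from the Brenier potential of the conditional law. Write $\nu_\omega\coloneqq\mathbb{P}_{X\vert\mathcal{G}}(\cdot,\omega)$. Two preliminary facts are needed. First, $\omega\mapsto\nu_\omega$ is measurable from $(\Omega,\mathcal{G})$ into $\mathcal{P}(\R^d)$ equipped with the topology of weak convergence: by the defining functional equation, $\omega\mapsto\nu_\omega(B)$ is $\mathcal{G}$-measurable for each $B\in\mathcal{B}^d$, and the Borel $\sigma$-field of the (Polish) space $\mathcal{P}(\R^d)$ is generated by the evaluations $\nu\mapsto\nu(B)$. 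Second, for each $\omega$ let $\varphi_\omega$ be the Brenier potential of $\nu_\omega$ provided by McCann's theorem --- which applies to the arbitrary target $\nu_\omega$ because the source $\mu_d$ is absolutely continuous --- taken lower semicontinuous and convex, normalized so as to be unique, and with effective domain contained in $\overline{\mathbb{B}}^d$ (adding $\iota_{\overline{\mathbb{B}}^d}$ changes neither $\partial\varphi_\omega$ on $\mathbb{B}^d$ nor $\mathbb{Q}_{X\vert\mathcal{G}}$), and let $\varphi_\omega^{*}$ be its convex conjugate; then $\varphi_\omega$ is finite and continuous on $\mathbb{B}^d=\mathrm{int}(\mathrm{dom}\,\varphi_\omega)$, while $\varphi_\omega^{*}$ (the conjugate of a function bounded below on the bounded set $\overline{\mathbb{B}}^d$) is finite and continuous on all of $\R^d$. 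The second fact I need is that $\omega\mapsto\varphi_\omega(v)$ is $\mathcal{G}$-measurable for every $v\in\mathbb{B}^d$, equivalently --- since $\varphi_\omega^{*}(x)=\sup_{v\in D}(\langle x,v\rangle-\varphi_\omega(v))$ for a fixed countable dense $D\subset\mathbb{B}^d$ --- that $\omega\mapsto\varphi_\omega^{*}(x)$ is $\mathcal{G}$-measurable for every $x\in\R^d$.

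For part~(i), Fenchel--Young duality gives, for every $(u,\omega)\in\mathbb{B}^d\times\Omega$,
\[
\mathbb{Q}_{X\vert\mathcal{G}}(u,\omega)=\partial\varphi_\omega(u)=\argmin_{x\in\R^d}\bigl\{\varphi_\omega^{*}(x)-\langle x,u\rangle\bigr\}.
\]
The integrand $f\bigl((u,\omega),x\bigr)\coloneqq\varphi_\omega^{*}(x)-\langle x,u\rangle$ is continuous in $x$ and, for each fixed $x$, $\mathcal{G}\otimes\mathcal{B}(\mathbb{B}^d)$-measurable in $(u,\omega)$, hence a Carath\'eodory --- in particular, normal --- integrand; and it is level-bounded in $x$ for each $(u,\omega)$, since $u\in\mathrm{int}(\mathrm{dom}\,\varphi_\omega)$ forces $\varphi_\omega^{*}(x)-\langle x,u\rangle\to+\infty$ as $|x|\to\infty$. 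By \cite[Thm.~14.37]{rockafellar2009variational}, $(u,\omega)\mapsto\argmin_{x}f$ is a $\mathcal{G}\otimes\mathcal{B}(\mathbb{B}^d)$-measurable, nonempty compact-valued multifunction; this is~(i).

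Part~(ii) I would handle symmetrically, staying with the well-behaved conjugate. Since $\varphi_\omega$ is lower semicontinuous and convex, $y\in\partial\varphi_\omega(u)\Leftrightarrow u\in\partial\varphi_\omega^{*}(y)$, so $\mathbb{F}_{X\vert\mathcal{G}}(y,\omega)=\partial\varphi_\omega^{*}(y)\cap\mathbb{B}^d$, and again by Fenchel--Young $\partial\varphi_\omega^{*}(y)=\argmin_{u\in\R^d}\{\varphi_\omega^{**}(u)-\langle y,u\rangle\}$ with $\varphi_\omega^{**}(u)=\sup_{x\in D'}(\langle u,x\rangle-\varphi_\omega^{*}(x))$ for a fixed countable dense $D'\subset\R^d$. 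The integrand $g\bigl((y,\omega),u\bigr)\coloneqq\varphi_\omega^{**}(u)-\langle y,u\rangle=\sup_{x\in D'}\bigl(\langle u,x-y\rangle-\varphi_\omega^{*}(x)\bigr)$ is a countable supremum of Carath\'eodory integrands, hence normal, and is level-bounded in $u$ because $\varphi_\omega^{**}\equiv+\infty$ off $\overline{\mathbb{B}}^d$. Hence $(y,\omega)\mapsto\partial\varphi_\omega^{*}(y)$ is $\mathcal{G}\otimes\mathcal{B}^d$-measurable by \cite[Thm.~14.37]{rockafellar2009variational}, and intersecting it with the fixed open set $\mathbb{B}^d$ (i.e.\ restricting the lower inverse to open subsets of $\mathbb{B}^d$) preserves measurability; this is~(ii).

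The routine ingredients are the two appeals to \cite[Thm.~14.37]{rockafellar2009variational} and the convex-analytic identities above. The main obstacle, I expect, is the second preliminary fact: the $\mathcal{G}$-measurability of $\omega\mapsto\varphi_\omega(v)$ amounts to saying that the normalized Brenier potential depends measurably on its target measure, and I would obtain it by composing the measurability of $\omega\mapsto\nu_\omega$ with the Borel --- indeed continuous --- dependence of the potential on $\nu$ furnished by the stability theory of optimal transport (uniform tightness of the targets yields potentials that are equi-Lipschitz on compact subsets of $\mathbb{B}^d$, hence precompact in $C_{\mathrm{loc}}(\mathbb{B}^d)$, and the constraint $(\nabla\varphi)_{\#}\mu_d=\nu$ together with the normalization identifies the limit). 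Some care is needed near $\partial\mathbb{B}^d$, where $\varphi_\omega$ may blow up and fail to extend continuously; this is precisely why part~(ii) is routed through $\varphi_\omega^{*}$ rather than through $\varphi_\omega$ itself. A more hands-on alternative bypasses continuity: the Brenier coupling of $\mu_d$ and $\nu_\omega$ is the unique minimizer of the weakly continuous functional $\pi\mapsto\int|x-y|^2\,\mathrm{d}\pi$ over the (measurably varying, weakly compact) set of couplings, so $\omega\mapsto\pi_\omega$ is measurable by a measurable-selection argument, and $\varphi_\omega$ is then recovered up to an additive constant by integrating the barycentric projection of $\pi_\omega$ along rays through the origin.
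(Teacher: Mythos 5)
Your proof is correct in its overall structure and takes a genuinely different route from the paper's. The paper endows $CL_{\neq\emptyset}(\mathbb{B}^d\times\R^d)$ with the Fell topology, defines the graph map $\Gamma:\nu\mapsto(\mathbb{B}^d\times\R^d)\cap\operatorname{graph}(\partial\varphi_\nu)$, proves $\Gamma$ is continuous by invoking the graphical stability result of Segers (2022, Theorem~1.1), shows the distance function $(u,\nu)\mapsto d(x,\partial\varphi_\nu(u))$ is lower semicontinuous (hence Borel), and finally composes with $(u,\omega)\mapsto(u,\mathbb{P}_{X\vert\mathcal{G}}(\cdot,\omega))$ and applies the distance-function characterization of measurable multifunctions (Rockafellar--Wets, Thm.~14.3). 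Part~(ii) is handled identically by swapping the two coordinates in the graph. You instead route both parts through Fenchel--Young and the $\argmin$-multifunction theorem (Rockafellar--Wets, Thm.~14.37), requiring a normalization of $\varphi_\omega$ and the $\mathcal{G}$-measurability of $\omega\mapsto\varphi_\omega^{*}(x)$. What the paper's route buys: by working with the subdifferential graph over $\mathbb{B}^d$ rather than the potential itself, it sidesteps the normalization issue entirely --- Segers's Lemma~4.2 guarantees $\Gamma$ is well-defined irrespective of the additive constant, and the Fell-topology continuity is exactly the form of stability that the lower-semicontinuity argument consumes. What your route buys: the two parts of the lemma are treated in parallel by applying the same $\argmin$ template to $\varphi_\omega^*$ and $\varphi_\omega^{**}=\varphi_\omega$, and the Fell-topology machinery is avoided in favour of the more standard normal-integrand calculus.

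The one place to be careful is the step you yourself flag: the $\mathcal{G}$-measurability of $\omega\mapsto\varphi_\omega(v)$ (equivalently of $\omega\mapsto\varphi_\omega^*(x)$). Your first route --- continuity of $\nu\mapsto\varphi_\nu$ on $(\mathcal{P}(\R^d),d_{\rm BL})$ --- needs some care: equi-Lipschitzness of the normalized potentials on compacts of $\mathbb{B}^d$ is available since they all push $\mu_d$ forward to probability measures, but one must check that the chosen normalization (e.g.\ $\varphi_\nu(0)=0$) indeed varies continuously, which ultimately appeals to the same stability results (Segers, or Del Barrio--Gonz\'alez-Sanz--Hallin) the paper cites. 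Your second route, via measurable selection of the optimal coupling, also works but requires recovering $\varphi_\omega$ measurably from $\pi_\omega$; your sketch (integrating the barycentric projection of $\pi_\omega$ along rays) is plausible though you'd need to verify it reproduces the correct $\mu_d$-a.e.\ gradient. Neither gap is fatal, but the amount of work hiding in this step is essentially the same as the paper's Lemma~\ref{Lemma:continouity} plus Preparatory Step~3, so your proof is not shorter once the obstacle is filled in.

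Two small technical remarks. The appeal to Theorem~14.37 does not actually require level-boundedness for measurability of the $\argmin$ map --- it is used only to guarantee nonempty compact values --- so your coercivity observations are for the nonemptiness/compactness conclusion rather than for measurability; this is fine, just worth keeping the two conclusions distinct. And the final intersection $\partial\varphi_\omega^*(y)\cap\mathbb{B}^d$ in part~(ii) yields a set-valued map whose values are not closed in $\R^d$; your remark about restricting lower inverses to open subsets of $\mathbb{B}^d$ handles the open-set case, and for closed $A$ one can write $A\cap\mathbb{B}^d=\bigcup_n\bigl(A\cap(1-1/n)\overline{\mathbb{B}^d}\bigr)$ and use countable unions --- worth spelling out since the definition of measurability being used asks for preimages of both open and closed sets.
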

\begin{definition}
    Call {\it conditional center-outward quantile function $\mathbf{Q}_{ X\vert \mathcal{G} }$ of $X$ given $\mathcal{G}$} any measurable selection of $\mathbb{Q}_{ X\vert \mathcal{G} }$ and  {\it conditional center-outward distribution function $\mathbf{F}_{ X\vert \mathcal{G} }$ of~$X$ given $\mathcal{G}$} any measurable selection of $\mathbb{F}_{ X\vert \mathcal{G} }$.
\end{definition}
\begin{remark} 
 Theorem 2.1 in  \cite{Carlier.Chernoz.Galichon.2016.AoS} establishes the joint measurability of~$\mathbf{Q}_{X|\mathcal{G}}$ for $\Omega=\R^d\times \R^m $, with $\mathbb P$  the joint probability distribution of the $(d+m)$-dimensional  random vector $(X,Z)$,  and $\mathcal{G}$   the $\sigma$-field generated by  the {\it vertical strips}, that is, the product sets of the form  $\R^d\times E$ with $E\in \mathcal{B}
 ^m$. Their proof  readily extends   to general measurable spaces, yielding an analog of \cref{lemma:set-valued-prediction-measure}.  This is not sufficient to conclude the measurability of $\mathbb{Q}_{X|\mathcal{G}}$, though. In other words, what  Theorem~2.1 of \cite{Carlier.Chernoz.Galichon.2016.AoS} proves is the existence of a measurable selection while the measurability of a set-valued mapping requires  the existence of a dense countable family of measurable selections---a {\it Castaing representation} \cite[see][Theorem 14.5]{rockafellar2009variational}.\vspace{-3mm}%: one selection is not enough.
\end{remark}
\subsection{Prediction quantile functions and regions}\label{Section:quantiles-pred}
Let ${\bf X}\coloneqq \{X_t\vert\, t\in{\mathbb Z}\}$ be a time series defined over a probability space $(\Omega, \mathcal{A}, \PP)$ and denote by~$\mathcal{F}_{\leq t}{\subset \mathcal{A}}$ the $\sigma$-field generated by $\{X_s\vert\, s\leq t \}$.  Define the {\it one-step-ahead prediction quantile set-valued mapping $\mathbb{Q}_{t+1|t}$ of $\bf X$ at time $t$}  as the conditional center-outward quantile set-valued mapping $\mathbb{Q}_{ X_{t+1}\vert \mathcal{F}_{\leq t} }$  of $X_{t+1}$ given $\mathcal{F}_{\leq t}$ and call {\it one-step-ahead prediction quantile function of~$\bf X$ at time $t$}  
  any measurable selection $\mathbf{Q}_{t+1|t}$ of $\mathbb{Q}_{t+1|t}$.   Similarly, define the {\it one-step-ahead prediction  distribution set-valued  function   of $\bf X$ at time $t$ as  
$\mathbb{F}_{t+1|t}%=\mathcal{F}_{\leq t}$
{ \coloneqq \mathbb{F}_{ X_{t+1}\vert \mathcal{F}_{\leq t} }}$}  and   call  {\it one-step-ahead prediction distribution function of~$\bf X$ at time~$t$} any measurable selection $\mathbf{F}_{t+1|t}$ of~$\mathbb{F}_{t+1|t}$. 

In practice, conditional prediction quantiles are used to construct prediction quantile regions. Define the {\it one-step-ahead prediction quantile region of order $\tau\in (0,1)$  of $\bf X$ at time~$t$}   as %$\mathcal{R}_{t+1|t}(\tau|\cdot)$  is
 the set-valued mapping
$$\Omega \ni \omega\mapsto \mathcal{R}_{t+1|t}(\tau| \omega)\coloneqq \mathbb{Q}_{t+1|t}(\tau \overline{\mathbb{B}^d}|\omega)\coloneqq \bigcup_{\|u\|\leq \tau} \mathbb{Q}_{t+1|t}(u|\omega) ,  \vspace{-2mm}$$
where $\tau \overline{\mathbb{B}^d}$ denotes the  closed unit ball with center $0$ and radius $\tau$, and 
the  {\it one-step-ahead autoregression median} as the set-valued mapping 
$$\Omega \ni \omega\mapsto m_{ t+1|t}(\omega)=\bigcap_{\tau\in (0,1)}\mathcal{R}_{t+1|t}(\tau| \omega) . \vspace{-2mm}$$
These one-step-ahead concepts straightforwardly extend to $k$-steps-ahead ones, $k\in{\mathbb N}$, with obvious notation $\mathcal{R}_{t+k|t}(\tau| \omega)$ and $m_{ t+k|t}(\omega)$ and similar properties. 

%\textcolor{red}{STOPPED HERE JULY 24}
%
%\bibliographystyle{plainnat}
%\bibliography{ref}
%
%\end{document}

The following result shows that the prediction quantile regions and autoregression median are $\mathcal A$-measurable, and  that the probability content of the region of order $\tau$ is $\tau$.

\begin{lemma}\label{lemma:proba-control}
    For every $\tau\in (0,1)$, the event\vspace{-1mm}
\begin{equation}\label{Lemma2.6(1)}
X_{t+1}\in \mathcal{R}_{t+1|t}(\tau| \cdot)  =  \{ \omega\in \Omega: \mathbb{F}_{t+1|t}( X_{t+1}(\omega), \omega) \cap  \tau\, \overline{\mathbb{B}^d}\neq \emptyset \} \in \mathcal{A}\vspace{-2mm}
\end{equation}
%is $\mathcal{A}$-measurable  
and  satisfies \vspace{-1mm}
\begin{equation}\label{Lemma2.6(2)}  \mathrm{P}\Big( X_{t+1}\in  \mathcal{R}_{t+1|t}(\tau|\cdot) \bigg \vert \mathcal{F}_{\leq t}\Big)  \geq  \tau  \quad \mathbb{P}-\text{\rm a.s.}\vspace{-1mm}
\end{equation}

 \color{black} If, moreover, 
\begin{equation}\label{moreover}%\mathbb{P}\left( \omega\in \Omega: \
   \mathbb{P}_{X_{t+1}\vert \mathcal{F}_{\leq t} }(\cdot ,\omega) \ll \ell_d \quad\text{ $\mathbb P$-a.s.}
   \end{equation}
    (where $\ell_d$ denotes  the Lebesgue measure over $(\R^d, \mathcal{B}^d)$), %\right)=~\!1
%  ,$
then, for every $\tau\in (0,1)$, \vspace{-1mm}
\begin{equation}\label{Lemma2.6(3)}  \mathrm{P}\Big( X_{t+1}\in  \mathcal{R}_{t+1|t}(\tau|\cdot) \bigg \vert \mathcal{F}_{\leq t}\Big)  =  \tau  \quad \mathbb{P}-\text{\rm a.s.}
\end{equation}
\vspace{-6mm}\end{lemma}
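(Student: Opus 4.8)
The plan is to prove the three assertions in turn, the core being a change-of-variables identity for the center-outward quantile map of the regular conditional law $P_\omega:=\mathbb{P}_{X_{t+1}\vert\mathcal{F}_{\leq t}}(\cdot,\omega)$. For the set identity in \eqref{Lemma2.6(1)}: since $\tau<1$ we have $\tau\,\overline{\mathbb{B}^d}\subset\mathbb{B}^d$, so unwinding \cref{definition:setcond} and the definition of $\mathcal{R}_{t+1|t}$ gives directly that $X_{t+1}(\omega)\in\mathcal{R}_{t+1|t}(\tau|\omega)=\bigcup_{\|u\|\leq\tau}\mathbb{Q}_{t+1|t}(u|\omega)$ if and only if $\mathbb{F}_{t+1|t}(X_{t+1}(\omega),\omega)\cap\tau\,\overline{\mathbb{B}^d}\neq\emptyset$. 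For measurability, the function $g(y,\omega):=\mathbbm{1}\big\{\mathbb{F}_{t+1|t}(y,\omega)\cap\tau\,\overline{\mathbb{B}^d}\neq\emptyset\big\}$ is the indicator of $\mathbb{F}_{t+1|t}^{-1}(\tau\,\overline{\mathbb{B}^d})$, the preimage of a closed set under the set-valued map of \cref{lemma:set-valued-prediction-measure}(ii), hence $g$ is $\mathcal{F}_{\leq t}\otimes\mathcal{B}^d$-measurable; composing with the $\mathcal{A}$-measurable map $\omega\mapsto(X_{t+1}(\omega),\omega)$ identifies the event with $\{g(X_{t+1},\cdot)=1\}\in\mathcal{A}$.

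For \eqref{Lemma2.6(2)}, since $g$ is bounded and jointly measurable a monotone-class argument --- verified first on product functions $g(y,\omega)=\mathbbm{1}_B(y)\mathbbm{1}_G(\omega)$ via the defining property of the regular conditional distribution --- gives the conditioning-with-a-parameter identity $\mathrm{P}\big(X_{t+1}\in\mathcal{R}_{t+1|t}(\tau|\cdot)\mid\mathcal{F}_{\leq t}\big)(\omega)=\int g(y,\omega)\,P_\omega(\mathrm{d}y)$ for $\mathbb{P}$-a.e.\ $\omega$. Fix such an $\omega$ with $P_\omega$ a probability measure and let $\varphi_\omega$ be the convex potential on $\mathbb{B}^d$ with $(\nabla\varphi_\omega)_{\#}\mu_d=P_\omega$, so $\mathbb{Q}_{t+1|t}(\cdot|\omega)=\partial\varphi_\omega$ and the integral equals $P_\omega\big(\partial\varphi_\omega(\tau\,\overline{\mathbb{B}^d})\big)$, a Borel set (it is the $\omega$-section of $\mathbb{F}_{t+1|t}^{-1}(\tau\,\overline{\mathbb{B}^d})$; equivalently, a $\sigma$-compact set, being the projection of the set $\{(u,x):u\in\tau\,\overline{\mathbb{B}^d},\,x\in\partial\varphi_\omega(u)\}$, which is closed by maximal monotonicity). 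Off an $\ell_d$-null --- hence $\mu_d$-null --- set $\varphi_\omega$ is differentiable, so $\partial\varphi_\omega(u)=\{\nabla\varphi_\omega(u)\}$ there, whence $\nabla\varphi_\omega(u)\in\partial\varphi_\omega(\tau\,\overline{\mathbb{B}^d})$ whenever $u\in\tau\,\overline{\mathbb{B}^d}$; thus $(\nabla\varphi_\omega)^{-1}\big(\partial\varphi_\omega(\tau\,\overline{\mathbb{B}^d})\big)\supseteq\tau\,\overline{\mathbb{B}^d}$ up to a $\mu_d$-null set. Since $P_\omega=(\nabla\varphi_\omega)_{\#}\mu_d$ and $\mu_d(\tau\,\overline{\mathbb{B}^d})=\tau$ (as $\|U\|$ is uniform on $[0,1]$ under $\mu_d$),
$$\mathrm{P}\big(X_{t+1}\in\mathcal{R}_{t+1|t}(\tau|\cdot)\mid\mathcal{F}_{\leq t}\big)(\omega)=\mu_d\big((\nabla\varphi_\omega)^{-1}(\partial\varphi_\omega(\tau\,\overline{\mathbb{B}^d}))\big)\;\geq\;\mu_d(\tau\,\overline{\mathbb{B}^d})=\tau ,$$
which is \eqref{Lemma2.6(2)}.

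Finally, under \eqref{moreover} this becomes an equality. Extend $\varphi_\omega$ to a proper lower-semicontinuous convex function on $\R^d$, equal to $+\infty$ outside $\overline{\mathbb{B}^d}$, and set $\psi_\omega:=\varphi_\omega^{\ast}$; since $\mathrm{dom}\,\varphi_\omega$ is bounded, $\psi_\omega$ is finite convex on $\R^d$, hence differentiable off an $\ell_d$-null Borel set $N_\omega$. By Fenchel's identity $y\in\partial\varphi_\omega(u)\Leftrightarrow u\in\partial\psi_\omega(y)$, so if $\nabla\varphi_\omega(u)=y\notin N_\omega$ and $y\in\partial\varphi_\omega(v)$ for some $v\in\tau\,\overline{\mathbb{B}^d}$, then $u$ and $v$ both lie in the singleton $\partial\psi_\omega(y)$, forcing $u=v\in\tau\,\overline{\mathbb{B}^d}$. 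Hence $(\nabla\varphi_\omega)^{-1}\big(\partial\varphi_\omega(\tau\,\overline{\mathbb{B}^d})\big)\subseteq\tau\,\overline{\mathbb{B}^d}\cup(\nabla\varphi_\omega)^{-1}(N_\omega)$, and $P_\omega\ll\ell_d$ gives $\mu_d\big((\nabla\varphi_\omega)^{-1}(N_\omega)\big)=P_\omega(N_\omega)=0$, so $P_\omega\big(\partial\varphi_\omega(\tau\,\overline{\mathbb{B}^d})\big)\leq\tau$; together with \eqref{Lemma2.6(2)} this yields \eqref{Lemma2.6(3)}. The step I expect to be most delicate is the measurability bookkeeping --- deriving the conditioning-with-a-parameter identity rigorously and confirming that $\partial\varphi_\omega(\tau\,\overline{\mathbb{B}^d})$ is Borel so the pushforward computations are legitimate --- whereas the convex-analytic inputs (a.e.\ differentiability of convex functions, Fenchel duality) are routine.
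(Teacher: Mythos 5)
Your proposal is correct and follows essentially the same route as the paper: measurability by composing the $\mathcal{F}_{\leq t}\otimes\mathcal{B}^d$-measurable set-valued distribution map with $\omega\mapsto(X_{t+1}(\omega),\omega)$, the inequality by rewriting the conditional coverage as the $\mu_d$-mass of the preimage $(\nabla\varphi_\omega)^{-1}\big(\partial\varphi_\omega(\tau\overline{\mathbb{B}^d})\big)$ and observing it contains $\tau\overline{\mathbb{B}^d}$ up to a $\mu_d$-null set, and the equality under \eqref{moreover} via a.e.\ invertibility of the quantile map. You simply unpack the steps the paper compresses — notably giving a self-contained proof of the a.e.-invertibility claim through the Legendre conjugate $\psi_\omega=\varphi_\omega^{*}$, its Lebesgue-a.e.\ differentiability, and Fenchel's identity, where the paper just asserts the fact — and you make explicit the conditioning-with-a-parameter identity and the Borel-ness of $\partial\varphi_\omega(\tau\overline{\mathbb{B}^d})$, which the paper leaves implicit.
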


\section{Estimation and Prediction}\label{Section:Estimation}
\subsection{Empirical prediction quantiles}
  Recall that a time series $\{X_t\vert\, t\in{\mathbb Z}\}$ is  strictly stationary if, for all $h\in \Z$, $m\in{\mathbb N}$, and~$\{t_1, \dots, t_m\}\subset ~\!\Z$,  the random vectors  $(X_{t_1}, \dots, X_{t_m})$ and $(X_{t_1+h}, \dots, X_{t_m+h})$ are equally distributed. The same $\{X_t\vert\, t\in{\mathbb Z}\}$  is Markov of order $p$ if, for any $f:\R^d\to \R$ continuous and  bounded,
   $\E[f(X_{t+1})\vert \mathcal{F}_{\leq t} ]=\E[f(X_{t+1})\vert (X_t, X_{t-1},\ldots,X_{t-p+1})].$
%for any continuous and bounded $f:\R^d\to \R$. 

Let $x^T\coloneqq ( x_1, x_2, \ldots, x_T )$ be an observed sample from the  strictly stationary Markovian  time series of order $p=1$   (extensions to $p>1$ are straightforward) ${\bf X}\coloneqq \{X_t\vert\, t\in{\mathbb Z}\}$. Denote by  ${\rm P}_1$ the distribution of $X_1$, by  $ {\rm P}_{1,2}$ the distribution of the pair $(X_1,X_2)$, and  assume   that~${\rm P}_1\ll \ell_d$, with density $p_1$. For a density point $x$ of ${\rm P}_1$,   denote by $ {\rm P}_{2|1}(\cdot\vert x) $ the conditional distribution of~$X_{2}$ given $X_1=x$. Assuming that $x$ is such that~$ {\rm P}_{2|1}(\cdot\vert x) $ has density ${p}_{2|1}(\cdot | x)$, write~$\mathbf{Q}_{t+1|t}(\cdot|x)$ for an arbitrary one-step-ahead prediction quantile mapping of $ {\rm P}_{2|1}(\cdot\vert x) $.

Our estimates of the predictive quantiles of ${\bf X}$ require the construction of a $k$-point \emph{regular grid} $\mathfrak{U}=\mathfrak{U}(T)\coloneqq \{u_1, \dots, u_{k}\}$ of $\mathbb{B}^d$ where  $ k= k(T)$ factorizes into $k_R k_S+ 1$ and the integers~$k_R=k_R(T)$ %\linebreak  
and~$k_S=k_S(T)$ tend to infinity as $T\to\infty$; these $k(T)$ gridpoints  are obtained  as the intersections  between
\begin{compactenum}
	\item[--] the $k_S(T)$
	 rays associated with a $k_S(T)$-tuple of unit vectors ${v}_1, \dots,{v}_{k_S(T)}\in\R^d$ such that~${1}/{(k_S(T))}\sum_{j=1}^{k_S(T)} \delta_{{v}_j}$ tends weakly, as $T\to\infty$, to the uniform distribution over the unit sphere $\mathcal{S}^{d-1}$, and
	\item[--] the $k_R(T)$ hyperspheres with center $\mathbf{0}$ and radius ${j}/({k_R(T)+1})$, $j=1, \dots, k_R(T)$,
\end{compactenum}
along with  the origin.  Associated with this grid is the empirical measure\vspace{-1mm} %we define  the measure
  \begin{equation}\label{measure_cond_Reg}
     {\rm \mu}_d^{(k(T))}\coloneqq  \frac{1}{k(T)}\sum_{j=1}^{k(T)}\delta_{u_j},%\in \mathcal{P}(\R^d)
\vspace{-1mm}  \end{equation}
  which, as $k_R\rightarrow\infty$ and $k_S\rightarrow\infty$, converges weakly to the spherical uniform $\mu_d$ over the unit ball $\mathbb{B}^d$.   Let \vspace{-2mm}
  \begin{equation}\label{NW-estimator}
      \widehat{\rm P}_{X_{t+1} \vert X_t=x} \coloneqq \sum_{i=1}^{T-1}  w_{i+1}^x \cdot \delta_{X_{i+1}}\quad \text{ with } \quad w_{i+1}^x \coloneqq  \frac{ K\left(\frac{X_i-x}{h} \right) }{\sum_{j=1}^{T-1}  K\left(\frac{X_j-x}{h} \right) }\vspace{-1mm}
  \end{equation}
 denote a   Nadaraya-Watson estimator, based on some  appropriate kernel $K$  and bandwidth~$h$,  of the  predictive probability measure ${\rm P}_{X_{t+1} \vert X_t=x}$.  This estimator is 
  used  in 
  %instead of the unspecified ${\rm P}_{X_{t+1} \vert X_t=x}$ in the constraints of 
   the following empirical optimal transport problem from $  {\rm \mu}_d^{(k(T))}$ to    $\widehat{\rm P}_{X_{t+1} \vert X_t=x}$:\vspace{-2mm} %$x$.
 \begin{align}
    \begin{split}\label{kanto_reg}
    	&{\hat{\pi} \in  \argmin_{\pi}\sum_{i=1}^k \sum_{j=2}^{T}   \frac{1}{2}\, \| u_i - X_j\|^2 \pi_{i,j} \ ,}\vspace{-1mm}\\
    	\text{subject to} &\ \ \sum_{j=2}^{T}  \pi_{i,j}= \frac{1}{k} \ \text{for all $i\in \{1, 2, \dots, k \}$},\vspace{-1mm}\\
            &\ {\sum_{i=1}^k \pi_{i,j}{= w^x_{j}} =\frac{  K\left(\frac{X_{j-1}-x}{h} \right) }{\sum_{t=2}^{T}  K\left(\frac{X_{t-1}-x}{h} \right) } \ \text{for all $j\in \{2, \dots, T \}$},} \\
    	&\ \pi_{i,j}\geq 0 \ \text{for all } i\in \{1, 2, \dots, 
        k \} \text{ and } j\in \{2, \dots, T \}.
\end{split}
\end{align}
It follows from  \cite{villani2003topics} that the solution $\hat{\pi}$  of \eqref{kanto_reg} has monotone support, i.e.,  is such that 
$  \langle x_{i_1}-x_{i_2},  u_{j_1}-u_{j_2} \rangle \geq 0    $ for all $(i_1,j_1)$ and $(i_2,j_2)$ for which  $\pi_{i_1, j_1}>0$  and  $\pi_{i_2,j_2}>0$. 
 We then define the empirical prediction quantile at the gridpoints as 
\begin{equation}\label{cond_quantile_estimator}
    \{u_1, \dots, u_k\} \ni  u_i\mapsto \widehat{\mathbf{Q}}_T(u_i|x)\coloneqq k \sum_{j=2}^T  \hat{\pi}_{i,j}\cdot x_j  .
\end{equation}

Note that for some choices of the kernel $K$, such as the indicator $K(x)=\mathbb{ I}_{[\|x\|\leq 1]}$,   the vertexes of the polytope defining the linear program \eqref{kanto_reg}, as a consequence of the Birkhoff theorem (see \cite{Birkhoff.46}), are (weighted) permutation matrices. In this case, $\hat \pi$ is already concentrated in the graph of $ u\mapsto \widehat{\mathbf{Q}}_{ T }(u|x)$.   The following result shows that $\widehat{\mathbf{Q}}_{ T }(\cdot| x)$ has monotone support. 
%\color{red}
\begin{remark} Due to stationarity, $  \widehat{\rm P}_{X_{t+1} \vert X_t=x} $ in \eqref{measure_cond_Reg} is, for all $t$ and $x$,  an   estimator of the one-step-ahead predictive distribution of $X_{t+1}$ computed at time $t$. In practice, however, being based on  observations up to time $T$, $  \widehat{\rm P}_{X_{t+1} \vert X_t=x} $ cannot be used as a predictor for~$t<T$. Therefore, in the sequel, we are only considering $  \widehat{\rm P}_{X_{T+1} \vert X_T=x} $ and the empirical one-step-ahead prediction quantiles, quantile regions, and quantile contours  computed at time~$T$.\vspace{-1mm}
\end{remark}
\color{black}
\begin{lemma}\label{lemma:monotone}
The empirical prediction quantile $ u\mapsto \widehat{\mathbf{Q}}_{ T }(u|x)$ is  monotone at the gridpoints, i.e.,    for all $r,s\in \{1, \dots, k\}$ and $x\in \R^d$, 
    $ \langle \widehat{\mathbf{Q}}_{ T }(u_s|x)- \widehat{\mathbf{Q}}_{ T }(u_r|x), u_s-u_r \rangle \geq 0$, $\mathbb P$-a.s. 
   
\end{lemma}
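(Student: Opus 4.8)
The plan is to reduce the statement to the already-recorded monotone support of the optimal plan $\hat\pi$ of \eqref{kanto_reg}, via the elementary observation that, for each fixed gridpoint $u_i$, the empirical prediction quantile $\widehat{\mathbf{Q}}_T(u_i|x)=k\sum_{j=2}^T\hat\pi_{i,j}\,x_j$ is a \emph{barycenter} of the data points $x_2,\dots,x_T$. Indeed, the first marginal constraint in \eqref{kanto_reg} gives $\sum_{j=2}^T k\hat\pi_{i,j}=1$, so $\{k\hat\pi_{i,j}\}_{j=2}^T$ is a probability vector and $\widehat{\mathbf{Q}}_T(u_i|x)\in\coh\{x_2,\dots,x_T\}$. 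The lemma is then the finite/discrete instance of the classical fact that barycentric projections of monotone couplings are monotone; in this setting it follows by a direct computation.

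Concretely, I would fix $r,s\in\{1,\dots,k\}$ and $x\in\R^d$, and set $a_j\coloneqq k\hat\pi_{s,j}\ge 0$ and $b_j\coloneqq k\hat\pi_{r,j}\ge 0$ for $j=2,\dots,T$, so that $\sum_{j=2}^T a_j=\sum_{j=2}^T b_j=1$. Inserting these identities,
\begin{equation*}
\widehat{\mathbf{Q}}_T(u_s|x)-\widehat{\mathbf{Q}}_T(u_r|x)=\sum_{j=2}^T a_j x_j-\sum_{j'=2}^T b_{j'}x_{j'}=\sum_{j=2}^T\sum_{j'=2}^T a_j b_{j'}\,(x_j-x_{j'}),
\end{equation*}
and taking the inner product with $u_s-u_r$,
\begin{equation*}
\langle \widehat{\mathbf{Q}}_T(u_s|x)-\widehat{\mathbf{Q}}_T(u_r|x),\,u_s-u_r\rangle=\sum_{j=2}^T\sum_{j'=2}^T a_j b_{j'}\,\langle x_j-x_{j'},\,u_s-u_r\rangle .
\end{equation*}

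It then remains to see that each summand is nonnegative. Whenever $a_j b_{j'}>0$ one has $\hat\pi_{s,j}>0$ and $\hat\pi_{r,j'}>0$, so the couples $(u_s,x_j)$ and $(u_r,x_{j'})$ both lie in the support of $\hat\pi$; the monotone support property recorded right after \eqref{kanto_reg} (the pairwise consequence of cyclical monotonicity of the support of an optimal plan, via \cite{villani2003topics}) yields $\langle u_s-u_r,\,x_j-x_{j'}\rangle\ge 0$. Hence the double sum is a sum of nonnegative terms, which gives the claimed inequality. The $\mathbb P$-a.s.\ qualifier is inherited verbatim: it comes solely from the a.s.\ existence/selection of the minimizer $\hat\pi$, and everything above is a deterministic manipulation once $\hat\pi$ is fixed.

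The argument is short and I do not expect a genuine obstacle; the only points requiring care are bookkeeping ones—correctly matching the two indices in the monotone-support inequality (gridpoints paired with gridpoints, data points with data points) and tracking the a.s.\ qualifier. An alternative, should one prefer a more conceptual write-up, is to introduce for each $u_i$ the conditional law of the second coordinate of $\hat\pi$ given first coordinate $u_i$ and invoke the general "barycentric projection is monotone" lemma; but in the present finite setting the explicit computation above is cleaner and self-contained.
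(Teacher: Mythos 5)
Your proposal is correct and is essentially the paper's own argument: the paper likewise uses that the row marginals make $\{k\hat\pi_{s,j}\}_j$ and $\{k\hat\pi_{r,j}\}_j$ probability vectors, expands the difference as the double sum $k^2\sum_{i,j}\hat\pi_{s,j}\hat\pi_{r,i}\langle X_j-X_i,\,u_s-u_r\rangle$, and concludes termwise from the monotone support of $\hat\pi$ recorded after \eqref{kanto_reg}. No meaningful difference in route or level of detail.
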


 If the function $u\mapsto {\widehat{\mathbf{Q}}}_{ T }(u| x)$ is to be extended  beyond the gridpoints, we choose any continuous maximal monotone interpolator of the  points $(u_i, \widehat{\mathbf{Q}}_{ T }(u_i| x))$, $i=1,\ldots,k$; see \cite{Hallin2020DistributionAQ,Barrio2022NonparametricMC} for details. For the sake of simplicity, we concentrate on autoregressions of   order $p=1$; the $p>1$ case readily  follows along the same~lines.\vspace{-1mm}

\subsection{Consistency}\label{Section:Estimation-consitency}

The consistent estimation of time series requires some assumptions on the impact of the observation  $X_t$ at time $t$ on the observation at time  $t+m$ as $m\to \infty$. In the literature, the evolution of this impact is generally measured by the so-called mixing conditions (see  \cite{Bradley.Mixing.Survey}). Another common assumption is the recurrence of the process (see \cite{Yakowitz.NN.SPaA,Sancetta.2009.JMA, Cai_2002}; \cite{Karlsen.AoS}, among others,  for $K$-nearest neighbors and Nadaraya–Watson autoregressors). The following  mixing condition is standard in nonpara\-metric time series estimation and was originally introduced in \cite{Rosenblatt.CLT.1956}.   Throughout, let ${\bf X}\coloneqq \{X_t\vert\, t\in\Z\}$,  $ \mathcal{F}_{\leq t}\coloneqq\sigma(\{X_{s}\}_{s\leq t})\subset{\mathcal A}$, and~$ \mathcal{F}_{\geq t}\coloneqq\sigma(\{X_{s}\}_{s\geq t})\subset{\mathcal A}$.
\begin{definition}[$\alpha$-mixing]
    A strictly stationary time series ${\bf X}$ is {\it $\alpha$-mixing} if\vspace{-1mm}
    $$ \alpha(m) \coloneqq \sup_{A\in \mathcal{F}_{\leq t},B \in \mathcal{F}_{\geq t+m}} |\PP(A\times B)-\PP(A)\PP(B)| \to 0\quad\text{ as $m\to \infty$.} $$
 %   tends to zero as $m\to \infty$. 
\end{definition}
Note that $\alpha(m)$ is upper- and lower-bounded by \vspace{-1mm}
$$ \alpha'(m)\coloneqq \sup_{U\in  \mathcal{B}_{\leq t,\infty}, U \in \mathcal{B}_{\geq t+m,\infty}} |\E(UV)-\E[U]\E[V]|,\vspace{-2mm}$$
where $\mathcal{B}_{\leq t,p}$ and $\mathcal{B}_{\geq t,p}$  denote the unit balls in $L^{p}(\mathcal{F}_{\leq t}, \PP)$ and $L^{p}(\mathcal{F}_{\geq t}, \PP )$, respectively,   for~$p\in~\![1, \infty]$.
The following condition, which is related to the notion of {\it $\beta$-mixing} (see \cite[Theorem 3.7]{Bradley.Mixing.Survey})
was used by Rosenblatt to derive the consistency of  kernel density estimators for Markov processes  \citep{Davis2011}. 
%In particular, assuming that  
 %\begin{align*}
%   \E(U \vert V)-\E[U]\E[V]&= \E[(U-\E[U])(V-\E[V])] \\
%   &= \E[ U \E[V| \mathcal{B}_{\leq t,\infty}]]
%\end{align*}
\begin{definition}[Geometric ergodicity]
    A strictly stationary time series ${\bf X}$ is {\it geometrically  ergodic of order two} if %$G_2$ if 
    $ \beta(m)\coloneqq \sup_{U\in  \mathcal{B}_{\leq t,2}, U \in \mathcal{B}_{\geq t+m,2}} |\E(UV)-\E[U]\E[V]| $ 
    decreases exponentially fast as $m\to\infty$.
\end{definition}
The following assumptions are  standard in   regularity results for  center-outward quantiles (see \cite{Barrio2023RegularityOC,FIGALLI2018413,DELBARRIO2020104671}). For the regularity of  conditional (with respect to~covariates) quantiles, we refer the reader to \cite{gonzalezsanz2024linearizationmongeampereequationsstatistical}.   
%\textcolor{blue}{In my compiler I get reference  González-Sanz and Sheng (2024}
\begin{assumption}\label{Assumption-On-density}(Regularity condition)
For all $x$ in the support  ${\rm supp}(\rm P_1)$ of~$\rm P_1$, the conditional distribution ${\rm P}_{2|1}(\cdot|x)$  is supported on a convex set,  and   its  density~${p}_{2|1}(\cdot|x)$ is continuous and bounded away from zero in that support.
\end{assumption}
\begin{remark}
    Under \cref{Assumption-On-density},  it follows from \cite{Barrio2023RegularityOC}, \cite{FIGALLI2018413}, and \cite{DELBARRIO2020104671}  that, for all  $x$ in the support of ${\rm P}_1$, $\mathbf{Q}_{t+1|t}(\cdot|x)$ is continuous in $\overline{\mathbb{B}^d}\setminus \{0\}$ and~$\mathbf{F}_{t+1|t}(\cdot|x)$ can be extended to be  continuous over $ \R^d $. However, as $\mathbf{Q}_{t+1|t}(\cdot|x)$ might   be discontinuous at $0$, the median
     $\mathbb{Q}_{t+1|t}(0|x)$ could be set-valued (not a singleton). 
\end{remark}

Next, let us introduce a kernel function $K$ with the following properties.

\begin{assumption}\label{Assumption-On-kernel}
The   kernel function $K$  
%\begin{compactenum}
%    \item 
    is nonnegative,
%     \item
 bounded, and 
%   \item 
 integrates to one in  the Lebesgue measure.
%   \end{compactenum} 
\end{assumption}
We then have the following results. 
 \begin{lemma}\label{lemma:consistency}
   Let $X_1, \dots, X_T$ be a realization of a strictly stationary Markov   pro\-cess~${\bf X}$ of order one satisfying \cref{Assumption-On-density}.  Let the kernel~$K$ satisfiy \cref{Assumption-On-kernel}.  Fix~$x\!\in~\!\!{\rm supp}(\rm P_1)$ with $p_1(x)>0$ and assume that one of the following conditions  holds: 
    \begin{compactenum}
        \item ${\bf X}$  is geometrically ergodic of order two,  $h\to 0$, and $h^d  T \to \infty$ as $T\to\infty$; or  
        \item  ${\bf X}$  is $\alpha$-mixing with $\alpha = \alpha (m)$ decreasing exponentially fast as $m\to\infty$,  $h\to 0$, and~$h^{2d}  T \to ~\!\infty$ as $T\to\infty$.
    \end{compactenum}
    Then, for any continuous bounded function $f:\R^d\to \R$, letting $\widehat{\rm P}_{x}\coloneqq  \widehat{\rm P}_{X_{t+1} \vert X_t=x} $,% it follows that 
    $$ \int f {\rm d} \widehat{\rm P}_{x}= \frac{\sum_{t=1}^{ T -1}  f(X_{t+1})K\left(\frac{x-X_t}{h} \right) }{\sum_{t=1}^{ T -1}  K\left(\frac{x-X_t}{h} \right) } \overset{  \PP  }{\longrightarrow} \E[f(X_2)\vert X_1=x]\quad\text{as $T\to\infty$}.  $$
\end{lemma}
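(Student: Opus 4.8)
The plan is to establish the consistency of the Nadaraya--Watson estimator by the standard ratio decomposition, treating separately the numerator and denominator and invoking a suitable law of large numbers for weakly dependent sequences. Write $\widehat{m}_f(x)\coloneqq (T-1)^{-1}h^{-d}\sum_{t=1}^{T-1} f(X_{t+1})K\big(\tfrac{x-X_t}{h}\big)$ and $\widehat{g}(x)\coloneqq (T-1)^{-1}h^{-d}\sum_{t=1}^{T-1} K\big(\tfrac{x-X_t}{h}\big)$, so that $\int f\,{\rm d}\widehat{\rm P}_x = \widehat{m}_f(x)/\widehat{g}(x)$. First I would compute the expectations: by stationarity and a change of variables, $\E[\widehat{g}(x)] = h^{-d}\E[K(\tfrac{x-X_1}{h})] = \int K(u)\,p_1(x-hu)\,{\rm d}u \to p_1(x)$ at every density point $x$ with $p_1(x)>0$, using \cref{Assumption-On-kernel} (nonnegativity, boundedness, unit mass) together with continuity of $p_1$ on its support and dominated convergence; similarly $\E[\widehat{m}_f(x)] = \int K(u)\,\E[f(X_2)\mid X_1 = x-hu]\,p_1(x-hu)\,{\rm d}u \to \E[f(X_2)\mid X_1=x]\,p_1(x)$, where \cref{Assumption-On-density} guarantees that $z\mapsto \E[f(X_2)\mid X_1=z]\,p_1(z)$ is continuous in a neighbourhood of $x$ (continuity of $p_{2|1}(\cdot\mid z)$ in $z$ on the relevant support).

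Next I would control the variances. For $\widehat{g}(x)$, decompose $\mathrm{Var}(\widehat{g}(x))$ into the diagonal term and the covariance terms. The diagonal contributes $(T-1)^{-1}h^{-2d}\,\mathrm{Var}(K(\tfrac{x-X_1}{h})) = O\big((T-1)^{-1}h^{-d}\big)$, using $h^{-d}\E[K(\tfrac{x-X_1}{h})^2]\to p_1(x)\int K^2$ by boundedness of $K$, and this vanishes under $h^d T\to\infty$. The off-diagonal sum $\sum_{|s-t|\geq 1}\mathrm{Cov}(\cdot,\cdot)$ is where the dependence structure enters: under geometric ergodicity of order two I would bound the lag-$\ell$ covariance using the $\beta(\ell)$ coefficient (noting that $h^{-d}K(\tfrac{x-\cdot}{h})$, suitably normalized, lies in a ball of $L^2(\mathcal{F}_{\le t},\PP)$ up to a factor $h^{-d/2}$), so that each covariance is $O(h^{-d}\beta(\ell))$ and the sum over $\ell$ telescopes to $O(h^{-d}\sum_\ell \beta(\ell)) = O(h^{-d})$ since $\beta$ decays exponentially; dividing by $T$ and using $h^d T\to\infty$ gives negligibility. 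Under the weaker $\alpha$-mixing hypothesis one cannot use the $L^2$-ball bound directly because $K$ need not be square-integrable against the mixing coefficient in the same way; instead I would truncate the lag sum at a cutoff $\ell_T\to\infty$, bound the near-diagonal lags $\ell\le \ell_T$ crudely by $\|K\|_\infty \cdot O(h^{-d})$ each (Billingsley-type covariance inequality for $\alpha$-mixing with one bounded factor, giving $O(h^{-d})$ per lag and hence $O(\ell_T h^{-d})$ total, controlled provided we also exploit the joint density bound to upgrade this to $O(h^{-2d})$ only when the two arguments are both near $x$), and the far lags $\ell>\ell_T$ by the exponential decay of $\alpha$; optimizing $\ell_T$ against $h$ yields the condition $h^{2d}T\to\infty$. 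The same variance analysis applies verbatim to $\widehat{m}_f(x)$ since $f$ is bounded, replacing $\|K\|_\infty$ by $\|f\|_\infty\|K\|_\infty$.

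Combining, $\widehat{g}(x)\overset{\PP}{\to} p_1(x)>0$ and $\widehat{m}_f(x)\overset{\PP}{\to}\E[f(X_2)\mid X_1=x]\,p_1(x)$ by Chebyshev, and the conclusion follows from the continuous mapping theorem applied to $(a,b)\mapsto a/b$ on the event $\{\widehat{g}(x)>p_1(x)/2\}$, which has probability tending to one. The main obstacle is the off-diagonal covariance bound under the $\alpha$-mixing hypothesis: controlling $\mathrm{Cov}\big(K(\tfrac{x-X_0}{h}),K(\tfrac{x-X_\ell}{h})\big)$ requires balancing the mixing-inequality bound (which loses a factor $h^{-d}$ relative to the $\beta$-mixing case because only one factor can be bounded in sup-norm) against the fact that the pair density of $(X_0,X_\ell)$ near $(x,x)$ is itself $O(1)$, and it is precisely this trade-off that forces the stronger bandwidth requirement $h^{2d}T\to\infty$ in case (2) versus $h^dT\to\infty$ in case (1); the rest is routine kernel-estimation bookkeeping.
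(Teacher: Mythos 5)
Your overall plan matches the paper's: the ratio decomposition into numerator and denominator, a bias computation via change of variables and dominated convergence, and a variance bound with a diagonal piece of order $(Th^d)^{-1}$ plus off-diagonal covariances controlled by mixing. The bias step and the geometric-ergodicity variance step are essentially identical to the paper's; there, writing $S_t = f(X_{t+1})K_h\big(\tfrac{x-X_t}{h}\big) - \E[\cdot]$, one uses $|\mathrm{Cov}(S_s,S_t)| \leq \delta^{|t-s|}\|S_s\|_{L^2}\|S_t\|_{L^2} \lesssim \delta^{|t-s|}h^{-d}$, sums the geometric series, and obtains $O\big((Th^d)^{-1}\big)$.

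Where your sketch goes wrong is in the $\alpha$-mixing case. You propose a ``Billingsley-type covariance inequality with one bounded factor, giving $O(h^{-d})$ per lag'' and then say a joint-density bound ``upgrades'' this to $O(h^{-2d})$; this is backwards (that would be a worse bound, not an upgrade) and, more importantly, the inequality you invoke is not available under $\alpha$-mixing alone. Davydov/Rio-type bounds of the form $|\mathrm{Cov}(U,V)|\lesssim \alpha(\ell)^{1-1/p-1/q}\|U\|_p\|V\|_q$ require $1/p+1/q<1$, so pairing $\|U\|_1$ with $\|V\|_\infty$ loses the mixing entirely; the bound with one $L^1$ factor belongs to $\phi$- or $\rho$-mixing, not $\alpha$-mixing. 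The correct (and simpler) argument — and what the paper does — is to bound both factors in sup-norm: $\|S_t\|_\infty\leq \|K\|_\infty\|f\|_\infty h^{-d}$ gives $|\mathrm{Cov}(S_s,S_t)|\leq 4\|S_s\|_\infty\|S_t\|_\infty\,\alpha(|t-s-1|) \lesssim h^{-2d}\alpha(|t-s-1|)$, and since $\alpha$ decays exponentially you sum $\sum_\ell\alpha(\ell)<\infty$ directly with no truncation or optimization over a cutoff $\ell_T$, obtaining a variance of order $(Th^{2d})^{-1}$ and hence the condition $Th^{2d}\to\infty$. Your truncation machinery is unnecessary here precisely because the exponential decay of $\alpha$ makes the full series summable; truncation arguments are needed only when $\alpha$ decays polynomially and one must trade off against $h$.

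Two further small remarks. First, the paper handles the bias not by assuming continuity of $p_1$ directly but by approximating $r_f(z)\coloneqq p_1(z)\E[f(X_2)\mid X_1=z]$ by a compactly supported continuous function in sup-norm and then using uniform continuity; your direct change-of-variables route is fine but implicitly requires the same continuity of $r_f$. Second, the paper does not actually invoke a joint density bound of $(X_0,X_\ell)$ anywhere in this lemma — that refinement (which would improve the per-lag bound to $O(1)$) is standard in the kernel-density literature but is not used here, and you should not suggest it is needed.
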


Arguing as in \cite{Barrio2022NonparametricMC}, these results entail the pointwise consistency of the autoregression  quantiles $ \widehat{\mathbf{Q}}_T  ( u\vert x)$. The proof being exactly the same, it  is omitted; details are  left to the reader.

\begin{theorem}\label{theorem-consistency}   Let $X_1, \dots, X_{ T }$ be a realization of a strictly stationary Markov  process ${\bf X}$  of order one %\coloneqq \{X_t\vert\, t\in\Z\}$
 satisfying \cref{Assumption-On-density}. Let the kernel~$K$ satisfiy \cref{Assumption-On-kernel}.\linebreak Fix $x\in {\rm supp}(\rm P_1)$ with~$p_1(x)>0$ and assume that one of the two conditions (i) and (ii) of Lemma~\ref {lemma:consistency} holds. 
%    \begin{enumerate}
%        \item $\bf X$  is geometrically ergodic and $h\to 0$ and $h^d T\to \infty$; or  
%        \item  $\bf X$  is $\alpha$-mixing with $\alpha(m)$ decreasing exponentially fast (as $m\to \infty$), $h\to 0$ and $h^{2d} T\to \infty$.
%    \end{enumerate}
 Then, for any compact subset  $\mathcal{K}$ of $\mathbb{B}^d\setminus \{0\}$, % and for any $x\in {\rm supp}(P_1)$ with with $p_1(x)>0$, it follows that 
$ {\mathbf{Q}}_{2|1}( u\vert x) ={\mathbf{Q}}_{t+1|t}( u\vert x)$ is well defined (and, due to stationarity, does not depend on $t$) for all $x\in\mathcal{K}$, and%(recall that $ {\mathbf{Q}}_{t+1|t}( u\vert x)$ is defined for all $t$ and the series is stationary)}  
  $$ \sup_{u\in \mathcal{K}}\| \widehat{\mathbf{Q}}_T  ( u\vert x)- {\mathbf{Q}}_{2|1}( u\vert x) \| \overset{  \PP  }{\longrightarrow} 0\quad\text{as $T\to\infty$} .$$
\end{theorem}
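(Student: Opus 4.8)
The plan is to obtain the result by combining the two convergences already at our disposal --- weak consistency of the Nadaraya--Watson estimator $\widehat{\rm P}_x$ for ${\rm P}_{2\vert 1}(\cdot\vert x)$ (Lemma~\ref{lemma:consistency}) and weak convergence of the grid measure $\mu_d^{(k(T))}$ to $\mu_d$ --- with the stability of quadratic‑cost optimal transport under weak convergence of the marginals. First I would promote Lemma~\ref{lemma:consistency} to a statement about the whole measure: fixing a countable family $\{f_m\}\subset C_b(\R^d)$ that determines weak convergence and metrizing the latter by the bounded‑Lipschitz distance $d_{BL}$, the pointwise‑in‑$m$ convergence $\int f_m\,{\rm d}\widehat{\rm P}_x\overset{\PP}{\longrightarrow}\E[f_m(X_2)\vert X_1=x]$ yields $d_{BL}\big(\widehat{\rm P}_x,{\rm P}_{2\vert 1}(\cdot\vert x)\big)\overset{\PP}{\longrightarrow}0$. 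Since $\mu_d^{(k(T))}\rightharpoonup\mu_d$ holds deterministically, both marginals of the linear program~\eqref{kanto_reg} converge (the target one in probability). By the subsequence principle it then suffices to fix an arbitrary subsequence along which both convergences hold $\PP$‑almost surely and to establish the $\sup$‑norm convergence pathwise on that event.

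On such a realization, I would next record the regularity of the limit. Under Assumption~\ref{Assumption-On-density}, the results of \cite{Barrio2023RegularityOC}, \cite{FIGALLI2018413} and \cite{DELBARRIO2020104671} ensure that the center‑outward quantile map $\mathbf{Q}_{2\vert 1}(\cdot\vert x)=\nabla\varphi_x$ of ${\rm P}_{2\vert 1}(\cdot\vert x)$ exists, is single‑valued, and is a homeomorphism from $\mathbb{B}^d\setminus\{0\}$ onto $\mathrm{supp}\big({\rm P}_{2\vert 1}(\cdot\vert x)\big)\setminus\mathbb{Q}_{2\vert 1}(0\vert x)$; in particular it is uniformly continuous on every compact $\mathcal{K}\subset\mathbb{B}^d\setminus\{0\}$. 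This yields the first assertion of the theorem (well‑definedness on $\mathcal{K}$, the independence of $t$ being immediate from strict stationarity) and provides the continuous single‑valued limit needed below.

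The core step is then the stability argument. The optimiser $\hat\pi$ of~\eqref{kanto_reg}, viewed as a probability measure on $\mathbb{B}^d\times\R^d$, has marginals $\mu_d^{(k(T))}$ and $\widehat{\rm P}_x$ and, by the construction recalled after~\eqref{kanto_reg} together with Lemma~\ref{lemma:monotone}, is concentrated on a cyclically monotone set. By stability of optimal transport plans under weak convergence of the marginals (see \cite{villani2003topics}), together with the $\mu_d$‑a.s.\ uniqueness of the optimal plan from $\mu_d$ to ${\rm P}_{2\vert 1}(\cdot\vert x)$ (McCann), $\hat\pi$ converges weakly to $\pi_x\coloneqq\big(\mathrm{id},\mathbf{Q}_{2\vert 1}(\cdot\vert x)\big)_\#\mu_d$, the graph of a map single‑valued and continuous on $\mathbb{B}^d\setminus\{0\}$. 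A by‑now standard deterministic lemma --- monotone operators whose graphs converge, in the Painlev\'e--Kuratowski sense, to the graph of an operator that is single‑valued and continuous on an open set converge to it uniformly on the compact subsets of that set, the same reasoning controlling the barycentric projection $\widehat{\mathbf{Q}}_T(u_i\vert x)=k\sum_j\hat\pi_{i,j}x_j$ and its monotone interpolation --- then gives $\sup_{u\in\mathcal{K}}\|\widehat{\mathbf{Q}}_T(u\vert x)-\mathbf{Q}_{2\vert 1}(u\vert x)\|\to0$ on the chosen event; as the subsequence was arbitrary, the convergence holds in $\PP$‑probability. This is exactly the argument of \cite{Barrio2022NonparametricMC} and \cite{Hallin2020DistributionAQ}, which we would invoke essentially verbatim.

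The step I expect to be the main obstacle is the passage from weak convergence of the marginals to a usable convergence of the plans (and of their barycentric projections) when ${\rm P}_{2\vert 1}(\cdot\vert x)$ is not assumed compactly supported: one must rule out escape of mass to infinity and justify the interchange of the weak limit with the averaging defining $\widehat{\mathbf{Q}}_T(u_i\vert x)$, which requires a uniform‑integrability control of $\|X_2\|^2$ (equivalently of the quadratic cost) under $\widehat{\rm P}_x$ --- propagated through the Nadaraya--Watson weights from a moment bound on the stationary law --- or, alternatively, a truncation of ${\rm P}_{2\vert 1}(\cdot\vert x)$ to a large ball together with a control of the resulting error. This is precisely the delicate point handled in \cite{Barrio2022NonparametricMC}, and the same device carries over here. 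A minor additional point, easily dispatched, is that the gridpoints of $\mathfrak{U}(T)$ must eventually be dense enough in a neighbourhood of $\mathcal{K}$ for the monotone interpolation of the $\big(u_i,\widehat{\mathbf{Q}}_T(u_i\vert x)\big)$ to control $\widehat{\mathbf{Q}}_T(\cdot\vert x)$ on all of $\mathcal{K}$; this follows from the assumed density of the grid and the equicontinuity afforded by monotonicity and the uniform continuity of the limit.
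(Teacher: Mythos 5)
Your proposal reconstructs precisely the argument the paper delegates to \cite{Barrio2022NonparametricMC}: upgrade Lemma~\ref{lemma:consistency} to $d_{BL}$-convergence of $\widehat{\rm P}_x$, combine it with the deterministic weak convergence of $\mu_d^{(k(T))}$, pass to a.s.\ subsequences, then use stability of optimal transport plans together with the single-valuedness and continuity of $\mathbf{Q}_{2\vert 1}(\cdot\vert x)$ on $\mathbb{B}^d\setminus\{0\}$ (guaranteed by Assumption~\ref{Assumption-On-density}) to obtain uniform convergence of the barycentric projections on compacts, with the uniform-integrability/truncation device handling non-compact support. This is exactly the route the paper indicates (``arguing as in \cite{Barrio2022NonparametricMC} \dots the proof being exactly the same''), so your proposal is correct and essentially identical to the intended proof.
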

These pointwise limits, which are standard in the literature, are not very practical for prediction, though. % as, in general, we want to predict one step ahead.
 The following result addresses (for one-step-ahead prediction in stationary  Markov  processes of order one) this issue under the following additional assumption. 
\begin{assumption}\label{assumtpion-density-bounded}
For each $R>0$, there exists $ \Lambda_R>0$ such that 
$$ p_{1|2}(x_2|x_1) \leq \Lambda_R \quad \text{for all $x_1\in {\rm supp}(\rm P_1)\cap R\,\mathbb{B}^d$ and $x_2\in {\rm supp}(\rm P_{1|2}(\cdot|x_1))\cap R\,\mathbb{B}^d$.} $$ 
\end{assumption}

\begin{theorem}\label{Markov:Estimation} Let $\bf X$ be a strictly stationary Markov    process of order one satisfying Assumptions~\ref{Assumption-On-density} and~\ref{assumtpion-density-bounded}. Let the kernel~$K$ satisfiy \cref{Assumption-On-kernel}  and assume  that one  of the two conditions (i) and~(ii) of Lemma~\ref {lemma:consistency} holds. Then, 
%    \begin{enumerate}
%        \item $\X$  satisfies geometric ergodicity and $h\to 0$ and $h^d T\to \infty$; or  
%        \item  $\X$  is $\alpha$-mixing with $\alpha$ decreasing exponentially fast and $h\to 0$ and $h^{2d} T\to \infty$.
%    \end{enumerate}
for any compact subset  $\mathcal{K}$ of $\mathbb{B}^d\setminus \{0\}$,\vspace{-1mm}  %it follows that 
    $$ \sup_{u\in \mathcal{K}}\| \widehat{\mathbf{Q}}_{ T } ( u\vert X_{{ T }})- {\mathbf{Q}}_{2|1}( u\vert X_{{ T }}) \| \xrightarrow{\PP } 0 \quad\text{as $T\to\infty$}.\vspace{-1mm} $$
\end{theorem}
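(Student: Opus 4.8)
The plan is to upgrade the pointwise convergence of Theorem~\ref{theorem-consistency} to one that is locally uniform in the conditioning value, and then to exploit the tightness of the fixed marginal law ${\rm P}_1$ of $X_T$. Write $\Phi_T(x):=\sup_{u\in\mathcal{K}}\|\widehat{\mathbf{Q}}_T(u|x)-\mathbf{Q}_{2|1}(u|x)\|$, so that the goal is $\Phi_T(X_T)\xrightarrow{\PP}0$ (note that $\mathbf{Q}_{2|1}(\cdot|x)$ is single-valued on $\mathcal{K}\subset\mathbb{B}^d\setminus\{0\}$ under Assumption~\ref{Assumption-On-density}). First I would fix $\delta>0$ and choose $R,c>0$ so that the compact set $C:=\{x\in R\,\overline{\mathbb{B}^d}:p_1(x)\ge c\}\subset{\rm supp}({\rm P}_1)$ satisfies $\PP(X_T\in C)\ge1-\delta$; this is possible since ${\rm P}_1\ll\ell_d$ has density $p_1$. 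The point of Assumption~\ref{assumtpion-density-bounded} is that on $C$ one has both $\inf_{x\in C}p_1(x)\ge c$ and, via the identity $p_{2|1}(x_2|x_1)=p_{1|2}(x_1|x_2)\,p_1(x_2)/p_1(x_1)$, a uniform upper bound on $p_{2|1}(\cdot|x_1)$ over $x_1\in C$; together with the convexity of supports and the (per-$x$) lower bound from Assumption~\ref{Assumption-On-density} and the compactness of $C$, this makes the family $\{{\rm P}_{2|1}(\cdot|x):x\in C\}$ \emph{uniformly regular} in the sense of the regularity theory in \cite{Barrio2023RegularityOC,FIGALLI2018413,DELBARRIO2020104671}, so that the maps $\mathbf{Q}_{2|1}(\cdot|x)$, $x\in C$, share a common modulus of continuity on $\mathcal{K}$.

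Next I would establish a uniform-over-$C$ version of Lemma~\ref{lemma:consistency}: for a fixed countable convergence-determining class $\mathcal{D}$ of bounded Lipschitz functions and each $f\in\mathcal{D}$,
\[
\sup_{x\in C}\Bigl|\int f\,{\rm d}\widehat{\rm P}_x-\E[f(X_2)\mid X_1=x]\Bigr|\xrightarrow{\PP}0 .
\]
This would follow from the pointwise statement by a covering argument: cover $C$ by $O(\eta^{-d})$ balls of radius $\eta$, apply Lemma~\ref{lemma:consistency} at the centres, and control the oscillation inside each ball using the equicontinuity of $x\mapsto\E[f(X_2)\mid X_1=x]$ (from continuity of $p_{2|1}$ and the uniform bounds above) and a standard stochastic-equicontinuity bound for the kernel-smoothed map $x\mapsto\int f\,{\rm d}\widehat{\rm P}_x$ under the exponential mixing of Lemma~\ref{lemma:consistency}(i)--(ii) and Assumption~\ref{Assumption-On-kernel}; then let $\eta\to0$ coupled with $h$ and $T$. (This uniform Nadaraya--Watson step is classical and can alternatively be imported from the literature on uniform kernel-regression consistency under strong mixing.) It gives $\sup_{x\in C}\beta_{\rm BL}\bigl(\widehat{\rm P}_x,{\rm P}_{2|1}(\cdot|x)\bigr)\xrightarrow{\PP}0$, where $\beta_{\rm BL}$ metrises weak convergence.

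I would then combine the two ingredients through the stability of optimal transport maps, exactly as in the proof of Theorem~\ref{theorem-consistency} (i.e.\ as in \cite{Barrio2022NonparametricMC}): $\widehat{\mathbf{Q}}_T(\cdot|x)$ and $\mathbf{Q}_{2|1}(\cdot|x)$ are the monotone transport maps pushing $\mu_d^{(k(T))}$ and $\mu_d$ to $\widehat{\rm P}_x$ and ${\rm P}_{2|1}(\cdot|x)$ respectively, $\mu_d^{(k(T))}\Rightarrow\mu_d$, and the limits $\{{\rm P}_{2|1}(\cdot|x):x\in C\}$ are uniformly regular, so the stability estimate holds with a modulus not depending on $x\in C$; hence $\sup_{x\in C}\Phi_T(x)\xrightarrow{\PP}0$. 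Finally, for every $\varepsilon>0$,
\[
\PP\bigl(\Phi_T(X_T)>\varepsilon\bigr)\ \le\ \PP(X_T\notin C)\ +\ \PP\Bigl(\sup_{x\in C}\Phi_T(x)>\varepsilon\Bigr)\ \le\ \delta+o(1),
\]
because $\{\Phi_T(X_T)>\varepsilon\}\cap\{X_T\in C\}\subseteq\{\sup_{x\in C}\Phi_T(x)>\varepsilon\}$; letting $\delta\downarrow0$ completes the proof. In particular, the dependence between $X_T$ and the data used to build $\widehat{\mathbf{Q}}_T$ is harmless: $\sup_{x\in C}\Phi_T(x)\xrightarrow{\PP}0$ is a statement about the data alone, and may be evaluated at the data-dependent point $X_T$ on the event $\{X_T\in C\}$.

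The step I expect to be the main obstacle is turning the pointwise regularity of the conditional laws ${\rm P}_{2|1}(\cdot|x)$ into control that is uniform over a compact set of conditioning values: the regularity and stability theory for center-outward transport maps is stated for a fixed target measure, and Assumption~\ref{assumtpion-density-bounded} is exactly what is needed to bound the relevant conditional densities uniformly over $x\in C$ and thereby make the transport-map stability estimate uniform. An alternative that bypasses the uniform Nadaraya--Watson step is to condition on $X_T=x$: conditionally, the reversed sample $(X_{T-1},\dots,X_1)$ is a Markov chain whose one-step transition has the density $p_{1|2}(\cdot|x)$---bounded on bounded sets by Assumption~\ref{assumtpion-density-bounded}, which also controls the total weight carried by the $o(T)$ pairs near the endpoint that are affected by the conditioning---and which inherits the (time-symmetric) mixing and ergodicity; re-running the proofs of Lemma~\ref{lemma:consistency} and Theorem~\ref{theorem-consistency} for this chain shows that $\PP(\Phi_T(x)>\varepsilon\mid X_T=x)\to0$ for ${\rm P}_1$-almost every $x$, and dominated convergence against ${\rm P}_1$ then yields the claim.
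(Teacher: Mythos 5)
Your primary route and the paper's proof differ in a substantive way, and your fallback route is essentially the paper's. The paper does \emph{not} prove any locally uniform statement like $\sup_{x\in C}\Phi_T(x)\xrightarrow{\PP}0$. Instead it truncates both $X_T$ and $X_{T-1}$ to $R\,\mathbb{B}^d$ (costing $2\,{\rm P}_1(\R^d\setminus R\,\mathbb{B}^d)$), writes the remaining probability as an iterated integral against the joint density, and then uses the density bound of Assumption~\ref{assumtpion-density-bounded} once, at the last link $p_{2|1}(x_T|x_{T-1})\le\Lambda_R$, to replace the conditional law of $X_T$ given $X_{T-1}$ by Lebesgue measure on the ball. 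This decouples the integral in $x_T$ from the rest: the inner integral, for each fixed value of $x_T=x$, is (up to the negligible effect of freezing the last observation) exactly the quantity controlled by Theorem~\ref{theorem-consistency}, hence tends to $0$; dominated convergence over $x_T\in R\,\mathbb{B}^d$ finishes. That is precisely the "alternative" you sketch at the end: decoupling the conditioning point from the sample via the density bound is the content of the argument, and your backward-chain phrasing of it is a legitimate paraphrase.

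Your primary route has a real gap. You need $\sup_{x\in C}\Phi_T(x)\xrightarrow{\PP}0$, and you obtain it by asserting that $\{{\rm P}_{2|1}(\cdot|x):x\in C\}$ is "uniformly regular" so that (a) the maps $\mathbf{Q}_{2|1}(\cdot|x)$ share a common modulus of continuity on $\mathcal{K}$, and (b) the transport-map stability estimate behind Theorem~\ref{theorem-consistency} holds with a modulus not depending on $x\in C$. Neither (a) nor (b) follows from Assumptions~\ref{Assumption-On-density} and~\ref{assumtpion-density-bounded} as stated. Assumption~\ref{Assumption-On-density} gives, for each fixed $x$, convexity of the support and a lower bound on the density that may degenerate as $x$ ranges over $C$ (the lower bound is allowed to depend on $x$, and so may the support itself); the Caffarelli/Figalli--type regularity it yields is intrinsically pointwise in $x$. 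Assumption~\ref{assumtpion-density-bounded} only provides a one-sided upper bound on a conditional density, which is not enough to make the stability estimate uniform: the quantitative stability of Brenier maps under perturbations of the target measure requires two-sided density control and a fixed geometry of supports, and you would additionally need stochastic equicontinuity of $x\mapsto\widehat{\rm P}_x$ in $d_{\rm BL}$ and equicontinuity of $x\mapsto{\rm P}_{2|1}(\cdot|x)$ — none of which the paper assumes or proves. You flag this as "the main obstacle" and then invoke Assumption~\ref{assumtpion-density-bounded} to dispatch it, but that assumption is not designed for this purpose; in the paper it is used only to replace the last transition density by a constant, not to manufacture uniform regularity. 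So your primary route, as written, does not close; your alternative route does, and matches the paper.
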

%\textcolor{blue}{I changed $\widehat{\mathbf{Q}}_{ T } ( u\vert X_{{ T }}=x)$ by $\widehat{\mathbf{Q}}_{ T } ( u\vert X_{{ T }})$ which is a random variable. Same notation we used in the JASA papaer, see Theorem 3.3. therein. }
\cref{lemma:proba-control} and \cref{Markov:Estimation} provide the  asymptotic probability control over the quantile prediction regions, thereby allowing for ``interval prediction.''  Proofs are omitted as they follow the same arguments as  the proof of \cite[Corollary~3.4]{Barrio2022NonparametricMC}.  
\begin{corollary}
   Under the assumptions of \cref{Markov:Estimation},       for any $\tau\in [0,1)$,  \vspace{-1mm}    
    $$ \PP\left( X_{{ T +1}}\in \mathcal{R}^{( T )}(\tau \vert X_{{ T }}) \vert X_{{ T }}\right) \xrightarrow{\PP } \tau  \quad\text{as $T\to\infty$}.$$
\end{corollary}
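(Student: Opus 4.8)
The plan is to combine the exact (conditional) coverage identity from \cref{lemma:proba-control} with the uniform consistency of the empirical prediction quantiles from \cref{Markov:Estimation}, using a sandwiching argument between quantile regions of slightly smaller and slightly larger radius. Fix $\tau\in[0,1)$ and a small $\eta>0$ with $\tau+\eta<1$. The population region $\mathcal{R}_{T+1|T}(\tau\vert X_T)=\mathbf{Q}_{2|1}(\tau\overline{\mathbb{B}^d}\vert X_T)$ has \emph{exact} conditional coverage $\tau$ by \eqref{Lemma2.6(3)}, because \cref{Assumption-On-density} forces ${\rm P}_{2|1}(\cdot\vert X_T)\ll\ell_d$, which is precisely the hypothesis \eqref{moreover}; here one also uses \cref{assumtpion-density-bounded} only insofar as it is already invoked in \cref{Markov:Estimation}. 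The empirical region $\mathcal{R}^{(T)}(\tau\vert X_T)$ is built from $\widehat{\mathbf{Q}}_T(u_i\vert X_T)$ on the grid (extended by a maximal monotone interpolator), so the strategy is to show that, on the event where $\sup_{u\in\mathcal K}\|\widehat{\mathbf{Q}}_T(u\vert X_T)-\mathbf{Q}_{2|1}(u\vert X_T)\|$ is small (a probability-$1-o(1)$ event by \cref{Markov:Estimation}, with $\mathcal K$ a suitable annulus containing the sphere of radius $\tau$), the empirical region is squeezed between $\mathcal{R}_{T+1|T}(\tau-\eta\vert X_T)$ and $\mathcal{R}_{T+1|T}(\tau+\eta\vert X_T)$ up to a set of arbitrarily small $\ell_d$-measure, hence small ${\rm P}_{2|1}$-measure via the density bound in \cref{assumtpion-density-bounded}.

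The key steps, in order, are: (1) Reduce to showing that for every $\eta>0$,
$\mathcal{R}_{T+1|T}(\tau-\eta\vert X_T)\subseteq \mathcal{R}^{(T)}(\tau\vert X_T)\subseteq\mathcal{R}_{T+1|T}(\tau+\eta\vert X_T)$
holds up to Lebesgue-null modifications on a high-probability event; the sandwich follows because $u\mapsto\mathbf{Q}_{2|1}(u\vert X_T)$ is continuous on $\overline{\mathbb{B}^d}\setminus\{0\}$ (the Remark after \cref{Assumption-On-density}) and the uniform error on the annulus $\{\tau-\eta\le\|u\|\le\tau+\eta\}$ tends to $0$ in probability, so the image of the grid points with $\|u_i\|\le\tau$ lands, for $T$ large, inside a set that differs from $\mathbf{Q}_{2|1}(\tau\overline{\mathbb{B}^d}\vert X_T)$ by at most an $\varepsilon$-neighborhood of its boundary, whose $\ell_d$-measure is controlled by modulus-of-continuity arguments. (2) Apply \eqref{Lemma2.6(3)} at levels $\tau-\eta$ and $\tau+\eta$ to get, conditionally on $\mathcal F_{\le T}$,
$\tau-\eta-o_{\PP}(1)\le \PP(X_{T+1}\in\mathcal{R}^{(T)}(\tau\vert X_T)\vert\mathcal F_{\le T})\le \tau+\eta+o_{\PP}(1)$,
using the density bound from \cref{assumtpion-density-bounded} to absorb the boundary-neighborhood discrepancy into the $o_\PP(1)$ term. (3) Let $\eta\downarrow0$ along a subsequence to conclude the stated convergence in probability.

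The main obstacle I expect is step (1): controlling the boundary of the \emph{empirical} region. The empirical quantile is only defined at the grid points, then extended by an interpolator, and its support-monotonicity (\cref{lemma:monotone}) guarantees that the extended map is cyclically monotone, but one still needs to argue that $\bigcup_{\|u_i\|\le\tau}\widehat{\mathbf{Q}}_T(u_i\vert X_T)$ — or rather the region swept out by the interpolated map on $\tau\overline{\mathbb{B}^d}$ — is, with high probability, squeezed between two nested population regions. This requires transferring the \emph{uniform} convergence of \cref{Markov:Estimation} (valid on compacta away from $0$) into a statement about the sets, which in turn needs a quantitative handle on how the Lebesgue measure of a thin shell $\mathbf{Q}_{2|1}((\tau+\eta)\overline{\mathbb{B}^d}\setminus(\tau-\eta)\mathbb{B}^d\vert X_T)$ shrinks as $\eta\to0$; continuity of $\mathbf{Q}_{2|1}(\cdot\vert X_T)$ on the annulus gives this, but one must ensure the bound is uniform enough in $X_T$ to survive taking the outer probability. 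This is exactly the place where the proof parallels \cite[Corollary~3.4]{Barrio2022NonparametricMC}, and the cleanest route is to cite that argument verbatim after checking that \cref{Markov:Estimation} supplies the one input (uniform consistency at $X_T$) that the regression proof there obtained from its own consistency theorem.
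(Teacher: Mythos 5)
Your sandwiching argument (coverage of the empirical region squeezed between the exact coverages at levels $\tau\pm\eta$, using the uniform consistency from \cref{Markov:Estimation} to control the symmetric difference, the density bound from \cref{assumtpion-density-bounded} to convert Lebesgue smallness of the boundary shell into probability smallness, then $\eta\downarrow 0$) is exactly the structure of the argument the paper invokes: the published proof is simply omitted with a pointer to Corollary~3.4 of \cite{Barrio2022NonparametricMC}, which runs the same way with \cref{Markov:Estimation} in place of that paper's consistency theorem. You correctly identify this parallel yourself at the end, so the proposal agrees with the paper's route; the one technical point worth keeping explicit, which you flag, is that the Lebesgue-null set $\mathbf{Q}_{2|1}(0\vert X_T)$ and the uncontrolled grid points near the origin must be swept into the negligible boundary shell rather than the sandwich proper.
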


%\color{red}
\begin{remark}
Instead of a unique realization of the process $\bf X$, one might observe $N>1$ independent realizations $X^{n}_1,\ldots,X^{n}_{T_n}$, $n=1,\ldots,N$, of $\bf X$ (see Section~\ref{realsec}
 for an example). Then, the averaged estimators\vspace{-2mm}  %$\widehat{\rm P}_{X_{t+1} \vert X_t=x} $ is naturally 
  \begin{equation}\label{NW-estimatorn}
      \widehat{\rm P}_{X^n_{t+1} \vert X^n_t=x}^{(N)} \coloneqq\frac{1}{N}\sum_{n=1}^N \sum_{i=1}^{T_n-1}  w^{x;n}_{i+1} \cdot \delta_{X^n_{i+1}}\quad \text{ with } \quad w_{i+1}^{x;n} \coloneqq  \frac{ K\left(\frac{X^n_i-x}{h} \right) }{\sum_{j=1}^{T_n-1}  K\left(\frac{X^n_j-x}{h} \right) }
\vspace{-1mm}   \end{equation}
  naturally replace $\widehat{\rm P}_{X_{t+1} \vert X_t=x} $ as defined in \eqref{NW-estimator}, to which they reduce for $N=1$; the resul\-ting~$\widehat{\mathbf{Q}}_{ T }^{(N)} $ enjoy, {\it mutatis mutandis,} the same properties as soon as $T\coloneqq \sum_{n=1}^N T_n\to\infty$.  Details are left to the reader.\vspace{-3mm}
  \end{remark}
 \color{black}
 
\subsection{Consistency rates}\label{Section:Estimation-rates}
 Our first result shows a upper bound in local $L^2$-distance between the empirical and population quantile functions.  { Let $\nu_1$ and $\nu_2$ be probability measures over $\R^d$. Define \vspace{-3mm} }
$$d_{{\rm BLC}}(\nu_1, \nu_2)\coloneqq \sup_{f\in {\rm BLC}(\R^d)} \left\vert \int f {\rm d}\nu_1 -\int f {\rm d}\nu_2 \right\vert  \vspace{-6mm}$$
where\vspace{-2mm} 
\begin{multline*}
    {{\rm BLC}}(\R^d)\coloneqq \big\{f:\R^d\to \R \ \text{is convex and such that}\\   \vert f( x )-f( y )\vert \leq \| x - y  \| \ {\rm and}\ \vert f( x )\vert \leq 1, \ \text{for all }  x , y \in \R^d\big\}\vspace{-3mm} 
\end{multline*}
denotes the {\it Bounded-Lipschitz-Convex (BLC) semi-metric}.

In the sequel we use the following notation. Let $\{a_n\}$ and $\{b_n\}$ be deterministic sequences of real numbers.  Write $a_n\lesssim b_n$ if there exists a constant $C$ independent of $n$ such that~$a_n\leq~\!C b_n$ for all $n$. For a real-valued  random process  $\{Z_t\vert\, t\in\N\}$ {defined over some~$(\Omega, \mathcal{A}, \PP)$},  write~$Z_t=\mathcal{O}_\PP(|a_t|)$ if~$Z_t/|a_t|$ is stochastically bounded, i.e.~if, for any $\epsilon>0$, there exists~$M_\epsilon>0$ such that  
$ \PP\left({|Z_t|}/{|a_t|} \geq M_\epsilon\right) \leq \epsilon$ for all $t$.

%\textcolor{purple}{In reality all of the ``for $T$ large enough'' are superfluous. As everything is bounded/tight... in the first $n_0$ elements for all $n_0$, but we can add them. }\textcolor{magenta}{\bf ?? what do you propose as a change here?}
\begin{lemma}\label{lemma:wassersteinEstability}  Let $X_1, \dots, X_n$ be a realization of a strictly stationary Markov process $\bf X$ of order one satisfying \cref{Assumption-On-density}. Let the kernel~$K$ satisfiy \cref{Assumption-On-kernel}. \linebreak Fix~$x\in {\rm supp}(\rm P_1)$ with~$p_1(x)>0$ and such that  ${\rm P}_{2|1}(\cdot|x)$ is  $\alpha$-H\"older in ${\rm int}({\rm supp}({\rm P}_{2|1}(\cdot|x)))$ for some $\alpha\in (0,1)$. Assume that 
 one  of the two conditions (i) and~(ii) of Lemma~\ref {lemma:consistency} holds 
% one of the following condition holds: 
%    \begin{enumerate}
%        \item $\X$  satisfies geometric ergodicity and $h\to 0$ and $h^d T\to \infty$; or  
%        \item  $\X$  is $\alpha$-mixing with $\alpha$ decreasing exponentially fast and $h\to 0$ and $h^{2d} T\to \infty$.
%    \end{enumerate} { T }
 and set 
$$ \mathcal{V}_{{ T }}\coloneqq \big(\mathbf{Q}_{2|1}(\cdot|x)\big)^{-1}(\mathcal{K}') \cap \big(\widehat{\mathbf{Q}}_{T}(\cdot|x)\big)^{-1}(\mathcal{K}' ) $$
where
$\mathcal{K}'$ is a compact subset of ${\rm int}({\rm  supp}({\rm P}_{2|1}(\cdot|x))) \setminus \mathbf{Q}_{2|1}(0|x). $ 
Then,  
    $$ \E\left[\int_{\mathcal{V}_{{ T }}}\| \widehat{\mathbf{Q}}_T  ( u\vert x)- {\mathbf{Q}}_{2|1}( u\vert x) \|^2 {\rm d}\mu_d^{(k)}(u) \right]\lesssim  
\EE\left[d_{\rm BCL}( \widehat{\rm P}_{ T }, {\rm P}) \right]+d_{\rm BCL}( \mu_d^{(k)},\mu_d), $$
where $ \mu_d^{(k)}$ is defined as  in \eqref{measure_cond_Reg} for $k=k(T)$.

\end{lemma}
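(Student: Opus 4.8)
The strategy is to reduce the $L^2$-bound on the quantile functions to a stability statement for optimal transport maps, and then to control the relevant Wasserstein-type distance by the two error terms appearing on the right-hand side. The starting point is a quantitative stability result for Brenier/McCann maps: if $\varphi$ pushes $\mu_d$ to ${\rm P}_{2|1}(\cdot|x)$ and $\hat\varphi$ pushes $\mu_d^{(k)}$ to $\widehat{\rm P}_T$ (the Nadaraya--Watson estimate at $x$), then on the region $\mathcal{V}_T$ where both $\nabla\varphi$ and $\nabla\hat\varphi$ land inside the compact set $\mathcal{K}'\subset{\rm int}({\rm supp}({\rm P}_{2|1}(\cdot|x)))$ away from the median image, one has
$$\int_{\mathcal{V}_T}\|\nabla\hat\varphi(u)-\nabla\varphi(u)\|^2\,{\rm d}\mu_d^{(k)}(u)\lesssim W_1\big(\mu_d^{(k)}\circ(\nabla\hat\varphi)^{-1},\,\mu_d\circ(\nabla\varphi)^{-1}\big)$$
or, more precisely, a bound in terms of a convex-integral-probability metric comparing the two pushforwards together with the discrepancy between the source measures. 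This kind of estimate is exactly the content of the regularity/stability results cited before \cref{Assumption-On-density} (\cite{Barrio2023RegularityOC,DELBARRIO2020104671}, and for the conditional case \cite{gonzalezsanz2024linearizationmongeampereequationsstatistical}): under \cref{Assumption-On-density} the density ${p}_{2|1}(\cdot|x)$ is bounded away from zero and the support is convex, so the Monge--Amp\`ere equation is non-degenerate and $\nabla\varphi$ is locally H\"older (using here the $\alpha$-H\"older hypothesis on ${\rm P}_{2|1}(\cdot|x)$), which yields the quadratic-to-linear comparison on compacta away from the origin.

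Concretely, the steps are as follows. First, I would invoke the cited regularity theory to obtain, for the fixed density point $x$, the local stability inequality above with explicit constants depending only on $\mathcal{K}'$, the H\"older exponent $\alpha$, and the lower/upper density bounds from \cref{Assumption-On-density}; this is a deterministic statement valid for every realization, with $\widehat{\rm P}_T$ and $\mu_d^{(k)}$ playing the roles of the two approximating measures. Second, I would bound the transport cost on the right-hand side by a triangle-type decomposition: $W_1$ (or the relevant convex IPM) between the two pushforwards is controlled by $d_{\rm BLC}(\widehat{\rm P}_T,{\rm P}_{2|1}(\cdot|x))$ — since $\widehat{\mathbf{Q}}_T(\cdot|x)$ pushes $\mu_d^{(k)}$ to $\widehat{\rm P}_T$ by construction of the linear program \eqref{kanto_reg}, and $\mathbf{Q}_{2|1}(\cdot|x)$ pushes $\mu_d$ to ${\rm P}_{2|1}(\cdot|x)$ — plus a term $d_{\rm BLC}(\mu_d^{(k)},\mu_d)$ accounting for the mismatch of the source grids. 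Third, I would take expectations: the deterministic bound becomes $\E[\int_{\mathcal{V}_T}\|\widehat{\mathbf{Q}}_T-\mathbf{Q}_{2|1}\|^2\,{\rm d}\mu_d^{(k)}]\lesssim\E[d_{\rm BLC}(\widehat{\rm P}_T,{\rm P})]+d_{\rm BLC}(\mu_d^{(k)},\mu_d)$, where the second term is deterministic and hence comes out of the expectation unchanged.

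The main obstacle I anticipate is making the first step — the local stability of optimal maps — rigorous in the \emph{conditional} and \emph{random-target} setting. The classical stability estimates (\cite{Barrio2023RegularityOC}, \cite{DELBARRIO2020104671}) are stated for fixed, sufficiently regular target measures; here the target $\widehat{\rm P}_T$ is a random discrete measure, and the source $\mu_d^{(k)}$ is also discrete, so one must use a version of the stability bound that does not require smoothness of the approximating measures, only of the limit ${\rm P}_{2|1}(\cdot|x)$ (whose regularity is exactly what \cref{Assumption-On-density} plus the $\alpha$-H\"older hypothesis provide). The restriction to $\mathcal{V}_T$ and to $\mathcal{K}'$ bounded away from the median image $\mathbf{Q}_{2|1}(0|x)$ is precisely what is needed to stay in the region where the Monge--Amp\`ere solution is non-degenerate and the comparison constant is finite; care is needed to check that $\mathcal{V}_T$ is measurable and that the event "both maps land in $\mathcal{K}'$" interacts correctly with the expectation. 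A secondary technical point is verifying that $\widehat{\mathbf{Q}}_T(\cdot|x)$, although only defined at gridpoints and then monotonically interpolated (\cref{lemma:monotone}), does push $\mu_d^{(k)}$ forward exactly to $\widehat{\rm P}_T$ in the sense needed to apply the transport bound — this follows from the marginal constraints in \eqref{kanto_reg} together with the definition \eqref{cond_quantile_estimator}. Once these points are settled, the consistency from \cref{lemma:consistency} and \cref{theorem-consistency} guarantees that $\E[d_{\rm BLC}(\widehat{\rm P}_T,{\rm P})]\to0$ and the weak convergence $\mu_d^{(k)}\rightharpoonup\mu_d$ gives $d_{\rm BLC}(\mu_d^{(k)},\mu_d)\to0$, so the stated bound yields $L^2(\mu_d^{(k)})$-consistency on $\mathcal{V}_T$ as a corollary.
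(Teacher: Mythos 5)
There is a genuine gap, and it is precisely the one you flag as "the main obstacle": the local stability inequality is not a result you can import from \cite{Barrio2023RegularityOC}, \cite{DELBARRIO2020104671}, or \cite{gonzalezsanz2024linearizationmongeampereequationsstatistical} — those papers give Caffarelli-type \emph{regularity} of a single transport map, not a quantitative \emph{stability} bound between two maps with discrete approximating source and target. In the paper this inequality is derived from scratch, and the derivation is the entire content of the proof. The key chain is: write the identity $\int f\,{\rm d}(\widehat{\rm P}_x-{\rm P}_x)=\int f\,{\rm d}\hat\pi-\int f\circ\mathbf{Q}_{2|1}\,{\rm d}\mu_d$, specialize $f$ to the Brenier potential $\psi(\cdot|x)$ whose gradient is $\mathbf{F}_{2|1}(\cdot|x)$, apply Jensen's inequality to the barycentric projection $\widehat{\mathbf{Q}}_T(u|x)=k\sum_j\hat\pi_{ij}x_j$ (so that $\int\psi\circ\widehat{\mathbf{Q}}_T\,{\rm d}\mu_d^{(k)}\leq\int\psi\,{\rm d}\hat\pi$), then use strong convexity of $\psi$ on $\mathcal K'$ (from the $\mathcal C^{1,1}$ regularity guaranteed by \cref{Assumption-On-density} plus the H\"older hypothesis) to extract the quadratic term $\lambda\int_{\mathcal V_T}\|\widehat{\mathbf{Q}}_T-\mathbf{Q}_{2|1}\|^2\,{\rm d}\mu_d^{(k)}$, and finally use the Fenchel identity $\nabla\psi\circ\mathbf{Q}_{2|1}(u)=u$ and Kantorovich duality to express the remaining linear term through differences of dual potentials. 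That $d_{\rm BLC}$ (rather than $W_1$) appears on the right-hand side is not an accident: the test functions that emerge at the end, namely $\psi-\widehat\psi_T$ and $\varphi-\widehat\varphi_T$, are differences of convex Lipschitz functions (Lipschitz because the supports are compact), and the integrals against $\widehat{\rm P}_x-{\rm P}_x$ and $\mu_d-\mu_d^{(k)}$ are controlled precisely by the BLC semi-metric.

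Your step 2 also has the inequality going the wrong way. You propose to control $W_1$ of the pushforwards by $d_{\rm BLC}(\widehat{\rm P}_T,{\rm P}_{2|1}(\cdot|x))$. But the pushforwards \emph{are} $\widehat{\rm P}_T$ and ${\rm P}_{2|1}(\cdot|x)$ (that is what the coupling constraints in \eqref{kanto_reg} guarantee), and $d_{\rm BLC}\leq W_1$ since the BLC test class is a strict subset of the bounded-Lipschitz class. So $W_1\lesssim d_{\rm BLC}$ is false in general, and a stability estimate phrased in terms of $W_1$ would be strictly weaker than the stated lemma. The paper avoids $W_1$ entirely: the BLC semi-metric arises organically from the convexity of the Brenier potentials, not from a downstream comparison of metrics. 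To make your plan rigorous you would need to replace the black-box stability citation with the full Jensen/strong-convexity/Fenchel/Kantorovich derivation, which is what the paper does.
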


If $\bf X$ is strictly stationary and  Markov of order one,   $\{(X_{2t}, X_{2t-1})\}_{t\in\Z}$ is also strictly stationary and  Markov of order one, with Markov operator 
%. The Markov operator of the stationary time series $\{(X_{2t}, X_{2t-1})\}$ is denoted by
$$ \Theta:\mathbb{L}^2_0(\rm P_{1,2})\ni f\mapsto  \int   f(x_{3}, x_4) {\rm dP}_{(X_{3}, X_4)| (X_{1}, X_2)}((x_{3}, x_4)| (\cdot , \cdot))\in \mathbb{L}^2_0(P_{1,2}),$$
where $\mathbb{L}^2_0(P_{1,2})$ stands for the space of  $\rm P_{1,2}$-squared-integrable   Borel-measurable functions with zero  $\rm P_{1,2}$ mean. The following assumption is fundamental in our proof technique in order to apply a Hoeffding lemma for Markov sequences (see Theorem~1 in \cite{JianqingFan-etal.JMLR.2021}) and use standard chaining arguments.  
\begin{assumption}\label{Assumption:MArkov-kenel} The operator norm of  $\Theta$ 
is upper-bounded by $\delta\in (0,1)$.  
\end{assumption}

Under this assumption,  %\cref{Assumption:MArkov-kenel}
 which is stronger than geometric ergodicity, we obtain rates of convergence for $\widehat{\bf Q}_T$.
 
% 
% \textcolor{red}{STOPPED HERE}
%
%The following result provides the rates of convergence of the estimated prediction quantile map. The proof \textcolor{red}{(see ??)} establishes the bounds  
%$$\EE\left[d_{\rm BCL}( \widehat{\rm P}_{ T }, \rm P) \right]\lesssim \gamma({ T },h,d)= \begin{cases}
%\frac{ 1 }{{ T }^{{1}/{2}} h^{{d}/{2}}}+h^2&{\rm if} \ d<4,\vspace{2mm}\\
%\frac{ \log({{ T } h^{4}}) }{{ T }^{{1}/{2}} h^{2}}+h^2  &{\rm if} \ d=4, \vspace{2mm}\\
%   \frac{1}{{ T }^{{2}/{d}} h^2}+h^2  &{\rm if} \ d>4
%\end{cases} $$
%from which  \cref{lemma:wassersteinEstability} yields the following.
\begin{theorem}\label{Theorem:rates}  Let $X_1, \dots, X_n$ be a realization of a strictly stationary Markov process $\bf X$ satisfying Assumptions~\ref{Assumption-On-density} and \ref{Assumption:MArkov-kenel}. Suppose that   $\rm P_1$ is  supported on a compact set $\mathcal{X}$, that the kernel~$K$ satisfies \cref{Assumption-On-kernel}, and that  $ \int v K(v) {\rm d}v=0$.  Then, 
\begin{equation}\label{threebounds}\EE\left[d_{\rm BCL}( \widehat{\rm P}_{ T }, \rm P) \right]\lesssim \gamma({ T },h,d)\coloneqq \begin{cases}
\frac{ 1 }{{ T }^{{1}/{2}} h^{{d}/{2}}}+h^2&{\rm if} \ d<4\vspace{2mm}\\
\frac{ \log({{ T } h^{4}}) }{{ T }^{{1}/{2}} h^{2}}+h^2  &{\rm if} \ d=4 \vspace{2mm}\\
   \frac{1}{{ T }^{{2}/{d}} h^2}+h^2  &{\rm if} \ d>4.
\end{cases}
\end{equation}
Moreover, fixing  $x\in {\rm supp}(\rm P_1)$ with $p_1(x)>0$ and such that  ${\rm P}_{2|1}(\cdot|x)$ is $\mathcal{C}^2$ in ${\rm supp}({\rm P}_{2|1}(\cdot|x))$,  
 %the following hold
 as $ T \to \infty$, $h\to 0$, and~$h^d  T \to \infty$,
\begin{enumerate}
    \item for $\mathcal{V}_{ T }$ as in \cref{lemma:wassersteinEstability}, %it follows that 
    $$ \E\left[\int_{\mathcal{V}_{ T }}\| \widehat{\mathbf{Q}}_T  ( u\vert x)- {\mathbf{Q}}_{2|1}( u\vert x) \|^2 {\rm d}\mu_d^{(k)}(u) \right]\lesssim  \gamma({ T },h,d)+d_{\rm BLC}( \mu_d^{(k)},\mu);$$
    \item    for any compact subset  $\mathcal{K}$ of $\mathbb{B}^d\setminus \{0\}$, % it follows that 
 $$ \int_\mathcal{K}\| \widehat{\mathbf{Q}}_T  ( u\vert x)- {\mathbf{Q}}_{2|1}( u\vert x) \|^2 {\rm d}\mu_d^{(k)}(u)=\mathcal{O}_{\mathbb{P}}\left(\gamma({ T },h,d)+d_{\rm BLC}( \mu_d^{(k)},\mu)\right).$$ 
\end{enumerate}
\end{theorem}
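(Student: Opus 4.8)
The plan is to first establish the bound \eqref{threebounds} on $\EE[d_{\rm BCL}(\widehat{\rm P}_T,\rm P)]$, and then feed this into \cref{lemma:wassersteinEstability} together with a regularity upgrade to obtain the two quantile-error statements. For the first part, I would decompose the error into a bias term and a stochastic (variance) term using the triangle inequality for $d_{\rm BCL}$: writing $\widetilde{\rm P}_T$ for the expectation (over the innovations given the design, or a suitably smoothed deterministic proxy) of the Nadaraya--Watson-type empirical measure $\widehat{\rm P}_T$, split $d_{\rm BCL}(\widehat{\rm P}_T,\rm P)\le d_{\rm BCL}(\widehat{\rm P}_T,\widetilde{\rm P}_T)+d_{\rm BCL}(\widetilde{\rm P}_T,\rm P)$. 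The bias term $d_{\rm BCL}(\widetilde{\rm P}_T,\rm P)$ is handled by a standard Taylor expansion of the kernel-smoothed conditional density: since test functions in ${\rm BCL}(\R^d)$ are $1$-Lipschitz and $K$ has vanishing first moment ($\int vK(v)\,{\rm d}v=0$), the leading bias is $\mathcal O(h^2)$, using the $\mathcal C^2$ regularity of the conditional law on the compact support $\mathcal X$ of $\rm P_1$. The stochastic term is the crux: one must control $\EE\sup_{f\in{\rm BCL}(\R^d)}|\int f\,{\rm d}(\widehat{\rm P}_T-\widetilde{\rm P}_T)|$, i.e.\ an empirical-process supremum over the class ${\rm BCL}(\R^d)$ of bounded $1$-Lipschitz convex functions on the compact set $\mathcal X$.

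For that supremum I would proceed by standard chaining: ${\rm BCL}(\R^d)$ restricted to a compact set has metric entropy (in the sup-norm) of order $\eps^{-d/2}$ (the classical Bronshtein bound for bounded convex Lipschitz functions), which is exactly the source of the three-regime phenomenon $d<4$, $d=4$, $d>4$ appearing in $\gamma(T,h,d)$ — the entropy integral $\int_0^1 \eps^{-d/4}\,{\rm d}\eps$ converges iff $d<4$, diverges logarithmically at $d=4$, and is replaced by a Dudley-type truncated bound for $d>4$, producing the $T^{-2/d}$ term. To make chaining rigorous for dependent data I would invoke \cref{Assumption:MArkov-kenel}: since $\{(X_{2t},X_{2t-1})\}_t$ is Markov with Markov operator $\Theta$ of norm $\le\delta<1$, the Hoeffding-type inequality for Markov sequences of \cite{JianqingFan-etal.JMLR.2021} (their Theorem~1) gives sub-Gaussian-type tail bounds for $\frac1T\sum_{t}(g(X_{2t},X_{2t-1})-\EE g)$ with variance proxy inflated only by a factor depending on $\delta$; the relevant increments $g$ here are differences $f(X_{i+1})K((X_i-x)/h)$ normalized by the (deterministic, to leading order) denominator, whose variance is of order $(Th^d)^{-1}$ — whence the $T^{-1/2}h^{-d/2}$ scale in the $d<4$ regime. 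Combining the chaining bound with these mixing-corrected tail bounds, and a maximal inequality, yields $\EE[d_{\rm BCL}(\widehat{\rm P}_T,\widetilde{\rm P}_T)]\lesssim$ the first (non-$h^2$) term of $\gamma(T,h,d)$ in each regime.

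For the two quantile statements, part (1) is now essentially immediate: \cref{lemma:wassersteinEstability} (whose hypotheses are met because $\mathcal C^2$ implies $\alpha$-Hölder for any $\alpha\in(0,1)$ on the interior of the support) gives
$$\E\Big[\int_{\mathcal V_T}\|\widehat{\mathbf Q}_T(u|x)-\mathbf Q_{2|1}(u|x)\|^2\,{\rm d}\mu_d^{(k)}(u)\Big]\lesssim \EE[d_{\rm BCL}(\widehat{\rm P}_T,\rm P)]+d_{\rm BLC}(\mu_d^{(k)},\mu)\lesssim \gamma(T,h,d)+d_{\rm BLC}(\mu_d^{(k)},\mu).$$
For part (2) one must remove the random domain $\mathcal V_T$ and replace it by a fixed compact $\mathcal K\subset\mathbb B^d\setminus\{0\}$, and pass from an expectation bound to an $\mathcal O_{\mathbb P}$ statement. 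The domain replacement uses the uniform consistency of $\widehat{\mathbf Q}_T(\cdot|x)$ on compacts (\cref{theorem-consistency}) together with the $\mathcal C^2$ regularity: under that regularity $\mathbf Q_{2|1}(\cdot|x)$ is a homeomorphism from $\mathbb B^d\setminus\{0\}$ (equivalently $\overline{\mathbb B^d}\setminus\{0\}$) onto ${\rm int}({\rm supp}(\rm P_{2|1}(\cdot|x)))\setminus\mathbf Q_{2|1}(0|x)$, and $\widehat{\mathbf Q}_T(\cdot|x)$ converges to it uniformly, so for $T$ large, $\mathcal K\subset \mathcal V_T$ with probability tending to one, and on that event the integral over $\mathcal K$ is dominated by the integral over $\mathcal V_T$. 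The passage from $\EE[\cdot]\lesssim\gamma$ to $\mathcal O_{\mathbb P}(\gamma)$ is then Markov's inequality. I expect the main obstacle to be the chaining step for the dependent empirical process over the convex-Lipschitz class — in particular, correctly tracking how the Bronshtein entropy $\eps^{-d/2}$ interacts with the kernel bandwidth $h$ to produce precisely the three stated regimes, and ensuring the Markov-Hoeffding inequality of \cite{JianqingFan-etal.JMLR.2021} applies uniformly over the (bracketing of the) class with the variance proxy scaling as $(Th^d)^{-1}$ rather than $T^{-1}$; the $d>4$ regime, where one truncates the chaining and the rate $T^{-2/d}$ no longer comes from a second-moment/CLT scale, requires the most care.
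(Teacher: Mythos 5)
Your plan reproduces the paper's argument: a bias/variance split for $\EE[d_{\rm BLC}(\widehat{\rm P}_T,\rm P)]$ with bias of order $h^2$ from the vanishing first moment of the kernel, a chaining bound over the Bronshtein $\epsilon^{-d/2}$-entropy class of bounded convex Lipschitz functions combined with the Markov--Hoeffding inequality of Fan et al.\ (applied after the even/odd splitting of the sample exploited under \cref{Assumption:MArkov-kenel}) to get the three-regime $\gamma(T,h,d)$, and then \cref{lemma:wassersteinEstability} for part (i) together with a localization/high-probability-inclusion argument plus Markov's inequality for part (ii). The one step you leave implicit is the replacement of the random Nadaraya--Watson denominator $\frac{1}{T-1}\sum_t K_h(x-X_t)$ by the deterministic $p_1(x)$: the paper makes this explicit by interposing the intermediate measure $\widehat{\rm P}^{(2)}_x$ (NW numerator normalized by $p_1(x)$), whose $d_{\rm BLC}$-distance to $\widehat{\rm P}_x$ is controlled by a classical bias/variance analysis of the kernel density estimator at $x$ and contributes the dominated term $h^2+1/(Th^d)$ — without such an intermediate object, your ``$\widetilde{\rm P}_T$'' is not well defined for the ratio estimator and the reduction to the chaining step would not close.
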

\begin{remark}
    Analog results for  unconditional transport maps  can be found in \cite{GhosalSenAOS}, \cite{NabarunGhosalSenNips}, and \cite{Manole-et-al.AOS-plug-in} where the sharpest bound is provided. Our proof technique is closer to that of \cite{NabarunGhosalSenNips}, and we therefore  do not expect our bound to be sharp.  
The proof of \cite{Manole-et-al.AOS-plug-in}, however,  is
 not easily adaptable to this context, for two reasons. The first reason is the fact that it %they %\cite{Manole-et-al.AOS-plug-in} bounds
deals with %      are established for
        semidiscrete versions of   empirical optimal transport maps, while we are considering the discrete-discrete one;  the second reason is the singularity of the spherical uniform~$\mu_d$ at zero, which forces us to use localization arguments to avoid the origin. 
\end{remark}
\begin{remark}
    Note that, for %the choice of 
    $h^2= T ^{- {1}/{d}}$ and $d>4$, we get, in  Theorem~\ref{Theorem:rates} (ii),  the rate\vspace{-1mm} 
     $$ \int_\mathcal{K}\| \widehat{\mathbf{Q}}_T  ( u\vert x)- {\mathbf{Q}}_{2|1}( u\vert x) \|^2 {\rm d}\mu_d(u)=\mathcal{O}_{\mathbb{P}}\left({{ T }^{- {1}/{d}}}+d_{\rm BLC}( \mu_d^{(k)},\mu)\right).\vspace{-1mm}$$
    This rate is not as good as in \cite{NabarunGhosalSenNips} for the unconditional empirical transport map estimator, which is of order $ T ^{-{2}/{d}} $. This, however, is to be expected, as the estimation of  conditional quantiles involves two nonparametric methods---the estimation of the conditional measure,  then the estimation of the transport map---both of which are affected by the curse of dimensionality.\vspace{-1mm}
 \end{remark}

\section{Numerical Applications}
In this section, we assess the empirical performance of our proposed method in 
 simulated examples  (Section~\ref{simulated_examples}) and real data (Section~\ref{realsec}). 
The numerical results show that our method captures conditional heteroskedasticity and    nonconvex quantile contours in highly nonlinear autoregressive models.

\subsection{Simulated examples}\label{simulated_examples}
We simulated two examples (Cases~1 and~2)  of highly nonlinear $d$-dimensional asymptotically stationary\footnote{See Appendix~B.} vector autoregressive  series of order one with conditional heteroskedasticity   and  (Case~3) one example of a nonlinear and nonstationary series with highly nonconvex quantile contours;  simulated series lengths $T$ are up to $80,000$, after a warming-up period of $T_0 \approx~\!10000$  observations. For the sake of simplicity, we do not reflect that warming-up period $T_0$ in the notation, though, and write $X_{t}$ for $X_{T_0+t}$.  To allow for visualization, we focus on $d=2$ and $p=1$, but the method applies to any  $d$ and~$p$. 

%
%Being based on nonlinear data-generating processes,  the resulting series are not strictly stationary. However, we are showing in Appendix~B  that they are {\it asymptotically stationary;} all our simulations, therefore, include a warming-up period of $T_0 \approx 10000$  observations. For the sake of simplicity, we do not reflect that warming-up period $T_0$ in the notation, though, and write $X_{t}$ for $X_{T_0+t}$. \smallskip
%
For each simulated time series, two tasks were performed. \smallskip

\begin{compactenum}
\item[(1)]First, we kept track of the empirical conditional %center-outward
 quantile functions~$\widehat{\mathbf{Q}}_{X_{t+1} \vert X_t=x_t}$ ($x_t$~the realized value of $X_t$) along $t$ and illustrate their variation over time by plotting the corresponding quantile contours at time points $1\leq t_1< t_2< \ldots < t_M \leq  T-1$. These conditional quantiles are estimated based on~\eqref{NW-estimator}--\eqref{cond_quantile_estimator},  
%In each of the simulated examples, the estimated conditional quantiles based on \eqref{NW-estimator}, \eqref{kanto_reg} and \eqref{cond_quantile_estimator} are computed and compared to the theoretical conditional quantiles. 
%To make visualization possible, we focus on $d=2$ and keep track of the empirical conditional quantile functions~$\widehat{\mathbf{Q}}_{X_{t+1} \vert X_t=x_t}(\cdot)$ along $t$ so that its variation over $t$ can be graphically depicted. 
%For better visualization, we pick a subset of almost equidistant time points $t_1, t_2, \ldots, t_M \subset \{1, 2, \ldots, T-1\}$ at which the conditional quantile contours are plotted. 
%These empirical conditional center-outward quantiles at $t_m,\ m=1, 2, \ldots, M$ are computed
along the following steps. %Let $X_t^j$, $j=1, 2$  denote the~$j$-th coordinate of $X_t$.
\begin{compactenum}
    \item[(i)] Step 1: compute 
      \vspace{-3mm} %estimate the conditional density
          \begin{equation*}
            \widehat{\rm P}_{X_{t_m+1}|X_{t_m}=x_{t_m}}\coloneqq  \sum_{i=1}^{T-1} w_{i+1}(x_{t_m})  \delta_{x_{i+1}} \quad\!\!\text{where}\!\!\quad  w_{i+1}(x_{t_m})\coloneqq \frac{  K\left(\frac{x_i-x_{t_m}}{h} \right) }{\sum_{j=1}^{T-1}  K\left(\frac{x_j-x_{t_m}}{h} \right) }, 
          \end{equation*}
with a truncated Gaussian kernel  $K$  supported on the set of $k_Sk_R$ nearest neighbors~$x$ %$i=1,\ldots, k_Sk_R$ 
of~$x_{t_m}$, each %of these neighbor
of them  being assigned a weight proportional to~$e^{-\|x-x_{t_m}\|^2/h^2}$. 
       \item[(ii)] Step 2: compute the empirical optimal transport plan $\hat{\bpi}$ from the pre-determined uniform spherical grid $\mu_d^{(k)}$ to $\widehat{\rm P}_{X_{t_m+1}|X_{t_m}=x_{t_m}}$.
       \item[(iii)] Step 3: evaluate the target quantile contours (or regions) by cyclically monotone interpolation.%\smallskip
\end{compactenum}
In Case~1, moreover, the theoretical quantiles can be computed analytically, allowing us to compare empirical conditional quantiles to their theoretical counterparts for various values of $T$. The results are shown in Figures~\ref{case1_sample_sizes}, \ref{case2_sample_sizes}, and \ref{case3_sample_sizes}, respectively;  the time axes in these figures, and also in Figures~3,  7,  and 12,  have been rescaled to $t' = t/1250$ in Cases 1 and 3, to $t' = t/2500$ in Case 2.  
. 

\item[(2)] Second, for each series, we estimated the empirical unconditional quantile contours of its asymptotically stationary distribution (see Appendix~B for asymptotic stationarity). This estimation is based on a simulation of length $T^\prime$ (after adequate warming-up), independent of the simulation considered in (1); let $T^\prime$ be sufficiently large and, for convenience, let it be even. The computation goes along the same lines as in (i)--(iii) above, except that the empirical conditional distribution $\widehat{\rm P}_{X_{t_m+1}|X_{t_m}=x_{t_m}}$ is replaced by an empirical stationary distribution of the form (summing over even values of $t$ yields independent summands)\vspace{-3.5mm}

$\,$\vspace{-2mm}
 \begin{equation}\label{uncond}
    \widehat{\rm P}_{\bX} \coloneqq  \frac{2}{T^\prime} \sum_{k=1}^{T^\prime/2} \delta_{x_{2k}}.\vspace{-2mm}
\end{equation}
 Parallel to this, we also estimate, for a set~$x^1, \ldots, x^M$ of points chosen on these empiri\-cal unconditional quantile contours, the one-step-ahead predictive quantile functions~$\widehat{\mathbf{Q}}_{X_{t+1} \vert X_t=x^m}$ for various current values $x^m, \ m=1, \ldots, M$  (with~$M=~\!8$). The results are shown in  Figures~\ref{case1_predict}, \ref{case2_predict}, and \ref{case3_predict}, respectively, and illustrate  the dependence of  one-step-ahead predictive quantiles on  current quantile values---a dependence which is the essence of quantile autoregression.

%\textcolor{cyan}{Parallel to this, we also estimate the one-step-ahead predictive quantile functions $\widehat{\mathbf{Q}}_{X_{t+1} \vert X_t=x^m}$ given a set of various current values $x^m, \ m=1, \ldots, M$. The results (with~$M=~\!8$) are shown in  Figures~\ref{case1_predict}, \ref{case2_predict}, and \ref{case3_predict}, respectively, and allow for an inspection of the dependence of one-step-ahead predictive quantiles on quantile levels of the current value---a dependence which is the essence of quantile autoregression.}
\end{compactenum}\smallskip

The three data-generating processes considered in the simulations   are as follows.\medskip

\noindent
\textbf{Case 1.} The data-generating equation is \vspace{-2mm}
\begin{equation}\label{case1}
         X_{t+1} = \begin{bmatrix}
        \frac{1}{3}{(X_t^1 + X_t^2)}\vspace{1mm}  \\           
        \frac{1}{2}\sqrt{ {\|X_t\|^2+5} }
        \end{bmatrix} +
        \sin\Big( \frac{\pi}{10}\|X_t\|\Big) {\varepsilon}_{t+1}\vspace{-1mm}
        \end{equation}         
        with $
     %  \[
       {\varepsilon}_{t+1} \sim N( 0 , {\rm I})$, ${\varepsilon}_{t+1}\independent X_s$  for all $s\leq t$, and $X_0 \sim N( 0 , {\rm I})$.\vspace{-2mm}%\]\
       \pagenumbering{gobble}
       
   \begin{figure}[H]
    \centering
    \includegraphics[width=0.8\linewidth, height= 0.3\linewidth]{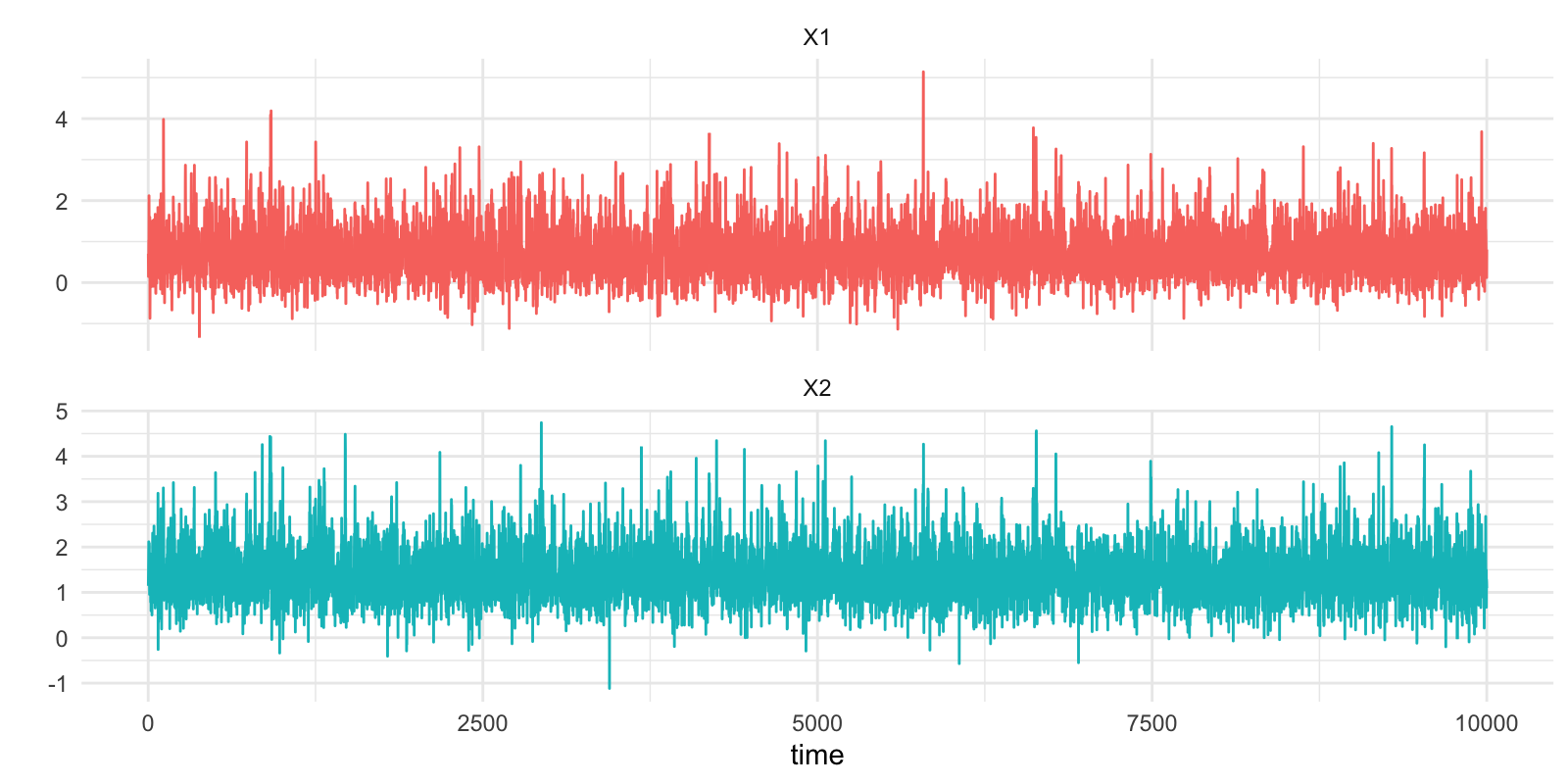}\vspace{-3mm}
    \caption{(Case 1) Simulated trajectories of the first (red)  and second (blue) components of~$X$ for~$T=10,000$}.
    \label{case1_ts}\vspace{-12mm}
\end{figure}
\begin{figure}[t!]
    \centering
    \includegraphics[width=0.9\linewidth]{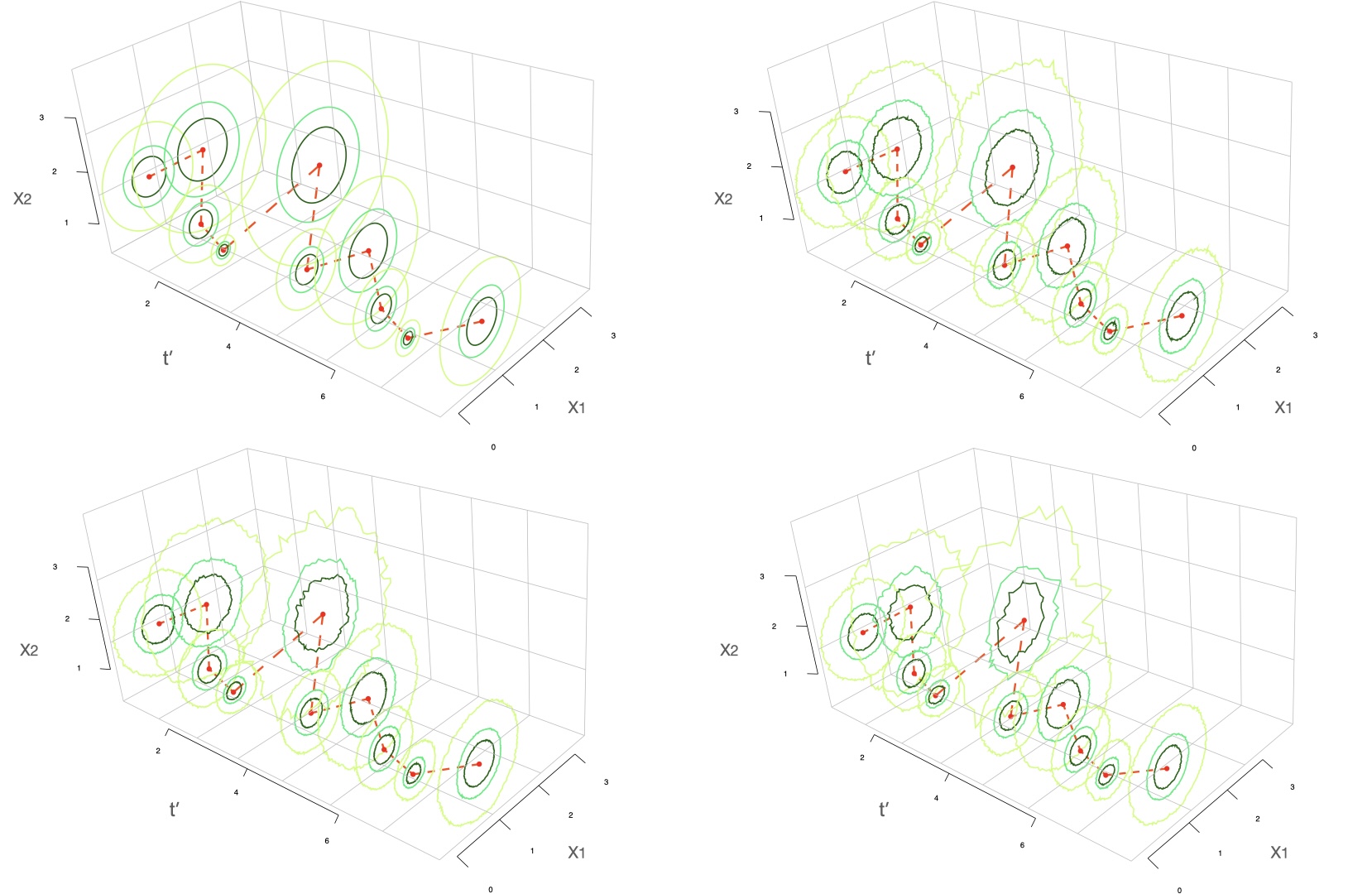}\vspace{-1mm}
    \caption{(Case 1) The empirical conditional center-outward quantile contours of orders~$\tau=~\!0.2$ (dark green), 0.4 (green), 0.8 (light olive), and conditional median (red) at randomly selected time points with different sample sizes $T=800,000$ (upper right panel),~$T=80,000$ (lower left panel), and $T=40,000$ (lower right panel). The upper left panel provides the corresponding theoretical conditional  contours and medians computed via equation \eqref{case1_theoretic_quantiles}. 
Kernel bandwidths were chosen as $h=0.5 \times \text{average pairwise distance}$. \vspace{-1mm}}
    \label{case1_sample_sizes}
\end{figure}
\begin{figure}[H]
    \centering
    \includegraphics[width=0.9\linewidth]{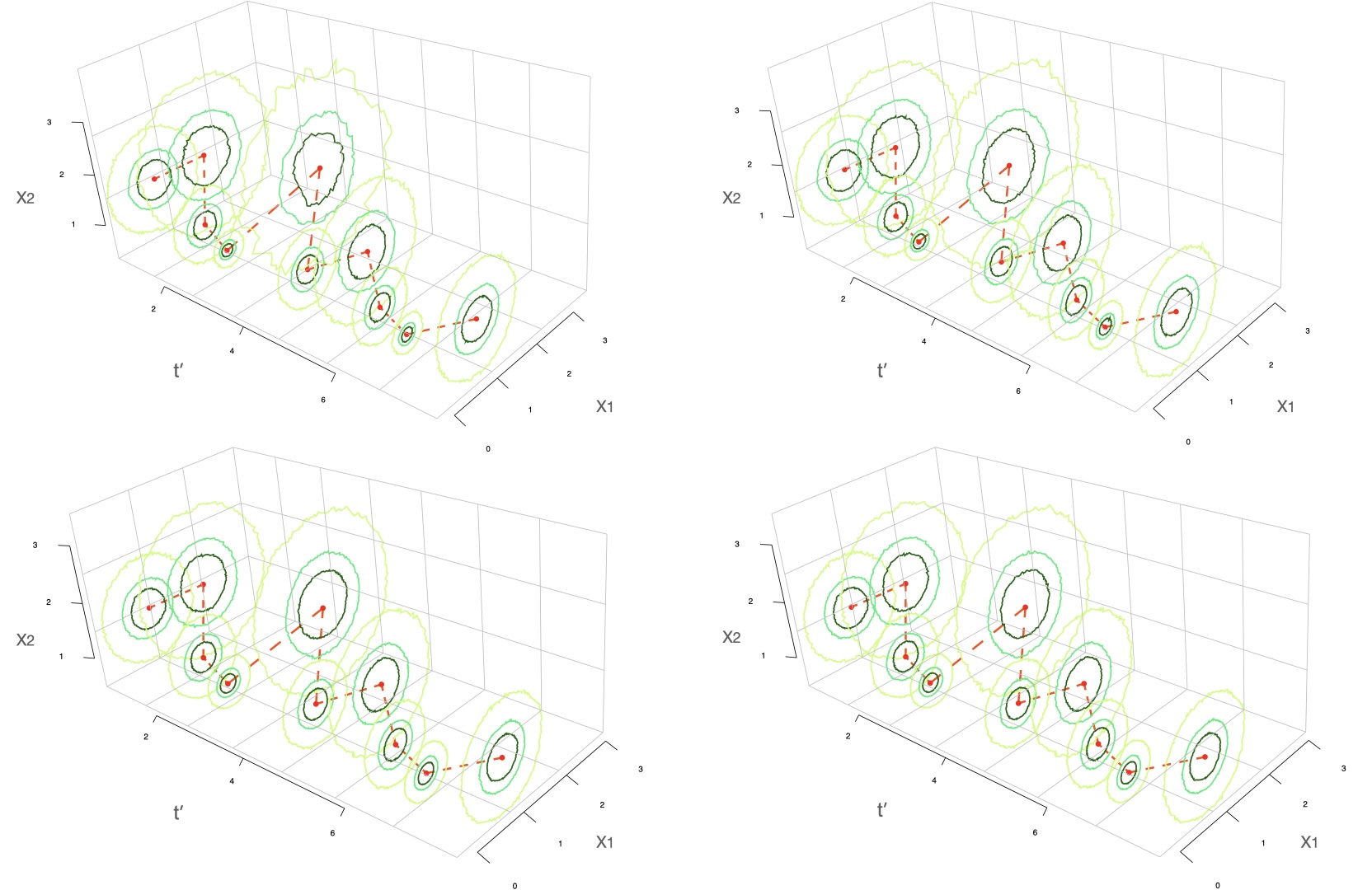}\vspace{-1mm}
    \caption{(Case 1)  Estimated conditional center-outward quantile contours and medians for fixed sample size $T=800,000$, based on  kernel bandwidths  $h=\ell\times \text{average pairwise distance}$, with $\ell = 0.2$ (upper left panel),   $\ell =0.5$ (upper right panel),  $\ell =1.2$  (lower left panel), and~$\ell =3.0$ (lower right panel).%\vspace{-20mm}
    }
    \label{case1_Kwidths}
\end{figure}
$\,$\vspace{-22mm}

\pagenumbering{arabic} \setcounter{page}{14}
\begin{figure}[b!]
    \centering
    \includegraphics[width=0.8\linewidth, height=0.8\linewidth]{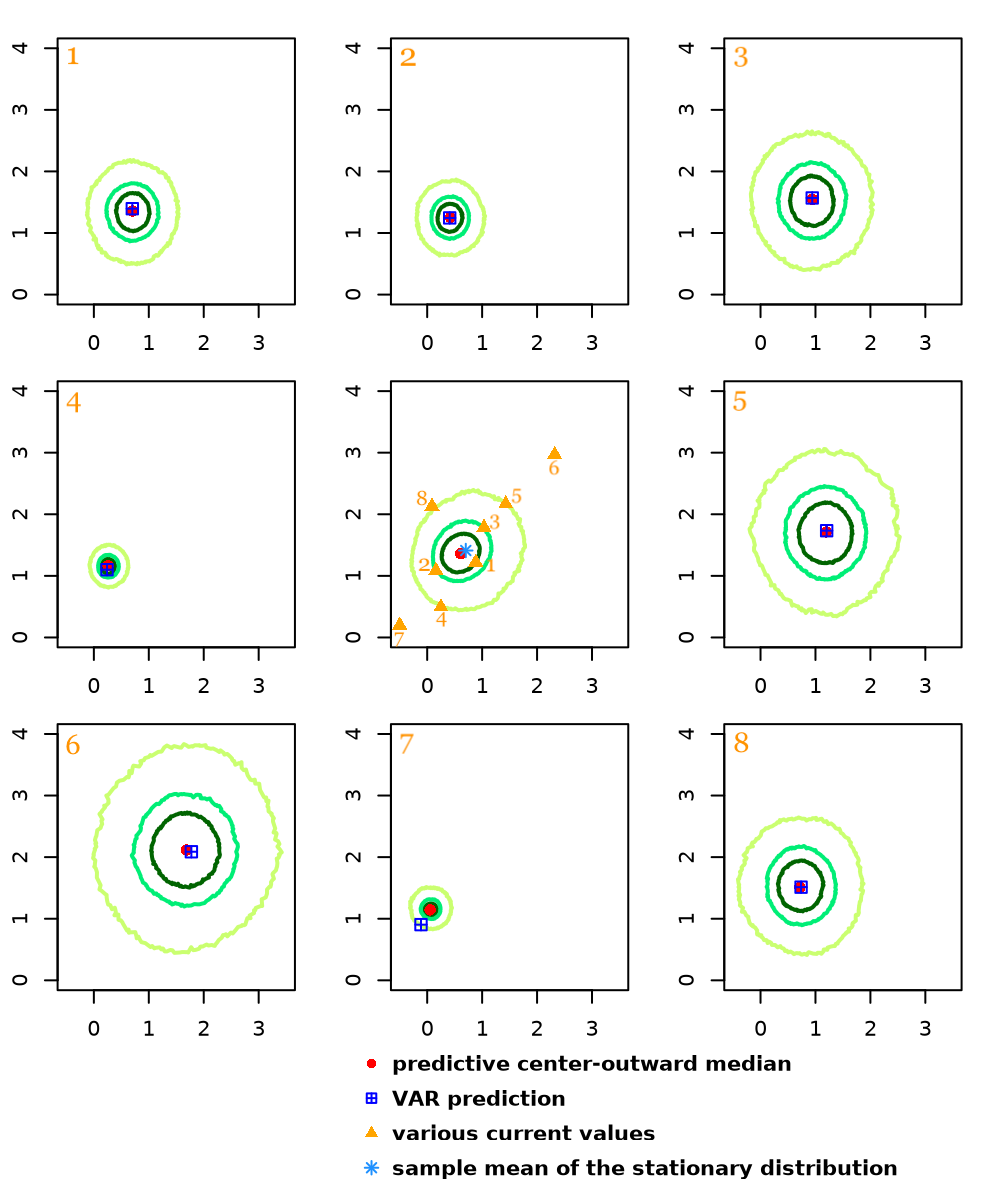}\vspace{-3mm}
    \caption{ (Case 1) 
 The estimated one-step-ahead conditional quantile contours and medians at selected current values.  The central panel shows the estimated center-outward quantiles of orders  $\tau=0.2$ (dark green), 0.4 (green), 0.8 (light olive), the center-outward median (red), and the sample mean (light blue) of the (unconditional) stationary distribution, and the eight current values (orange) at which quantile prediction is implemented in the surrounding panels. The surrounding panels show the one-step predictive center-outward quantile contours of order $\tau=0.2$ (dark green), 0.4 (green), 0.8 (light olive), the conditional center-outward median (red), and the conventional VAR(1) one-step-ahead mean prediction (blue) at these eight particular current values. %The orange number in each surrounding panel corresponds to the orange triangle marked with the same number in the central panel.
  }
    \label{case1_predict}
\end{figure} \vspace{-5mm}

Figure \ref{case1_ts} shows the marginal trajectories generated by \eqref{case1}.  Visual inspection does not reveal any trends, but the two series 
%or seasonal patterns, and the distributional structure seems time-invariant in expectation. However, the fluctuation or volatility of the observations varies over different periods, 
exhibit conditional heteroskedasticity. Since the  target distribution here is spherical, the optimal transport map from the spherical uniform (Step 2 above) admits an analytical form, and the theoretical center-outward quantile contours can be calculated explicitly. 
More precisely, the theoretical conditional (on $X_t$) quantile region of order $\tau$ at time ${t}+1$ of the process generated~by 
%\vspace{-1mm}
%\begin{equation*}
   $ X_{t+1} = g(X_t) + v(X_t){\varepsilon}$ with ${\varepsilon} \sim N( 0 , {\rm I})
$ %\end{equation*}
has the explicit form\vspace{-1mm}
\begin{equation}\label{case1_theoretic_quantiles}
    \Big\{ x: \big(v(X_{t})\big)^{-2} \big(\bx - g(X_{t})\big)^\top  \big(\bx - g(X_{t})\big) \leq \chi_{d,\tau}^2 \Big\}.\vspace{-0mm}
\end{equation}
This is how we compute the theoretical %center-outward
 conditional quantile contours in the upper left panel of Figure \ref{case1_sample_sizes}.  Empirical conditional contours  can then be compared, for different   $T$ values,  to the theoretical ones. Note that interpolating between the empirical~condi\-tional medians would make no sense here (and in Figures~3, 6, 7, 11, 12, 14--17), 
as these medians are indexed by the actual values $x_t$ of $X_t$, with a highly discontinuous 
mapping~$t\!\mapsto~\!\!x_t$; we nevertheless draw a dashed red line connecting these medians to help visualize their ordering over time.

%Shown in Figure \ref{case1_sample_sizes}, the estimation accuracy of our method improves as the sample size increases.
 In Figure \ref{case1_Kwidths}, we also explore the impact of  the choice of the kernel bandwidth $h$ on the accuracy of the estimation by implementing our method across a grid of different kernel bandwidths. Apparently, this impact strongly depends on the pairwise distance between sample point values. For a set of widely spread (concentrated) sample points, $h$ should be larger (smaller). Therefore, we explored a grid of $h$ values equal to $\ell$  times the average pairwise distance between  sample points, with $\ell =  0.1$, $0.2, \ldots, 3.0$. With estimation accuracy  measured by the MSE between the estimated and theoretical quantile contours, we conclude that  estimation accuracy here is best for $\ell\in [0.5,0.6]$. 
%Figure \ref{case1_Kwidths} visualizes the estimated conditional quantiles using various kernel widths $h=0.2, 0.5, 1.2, 3.0 \times \text{average pairwise distance}$. When $h$ is small, the estimated quantiles are able to capture conditional heteroskedasticity in the sense that quantiles at different time points are quite distinctive. However, there are a few distortions and the contours are rugged. When $h$ is large, the estimated quantiles are more smooth. But quantiles at different time points tend to be homogeneous and thus fail to capture conditional heteroskedasticity. 

%\textcolor{purple}{Note that joining the empirical conditional medians would make little sense, as these medians are indexed by the actual values $x_t$ of $X_t$, with a highly discontinuous mapping $t\mapsto x_t$.} 

Next, as explained in (2) above, we %finally %conduct one-step prediction at a variety of current values and
explore the dependence of the estimated one-step-ahead predictive quantiles %at time $t+1$
on the  {\it unconditional} quantile level of the current value. %after the time series has reached stationary.
To do so, we estimate two types of empirical center-outward quantile functions: the unconditional ones, characterizing the empirical (unconditional) stationary distribution \eqref{uncond}; the conditional ones or one-step-ahead predictive quantile functions given current value. Then, we provide the empirical one-step-ahead predictive contours, respectively, when $X_t = x^m$, $m=1,\ldots,8$  
for    $x^m$ with various quantile levels in  the stationary distribution.  The results are shown in Figure~\ref{case1_predict}, where the central panel displays the empirical center-outward quantile contours of the stationary distribution and the eight va\-lues of~$x^m$,  while the other panels show the empirical contours of $X_{t+1}$ conditional on~$X_t=~\!x^m$. Inspection of this figure illustrates the impact of the current quantile value on the one-step-ahead prediction of quantiles, accounting for huge variations in the predicted  location, scale, and shape. %---which is the essence of the quantile autoregression approach. 
% 
% 
% perform two tasks: 1. estimate the unconditional center-outward quantiles of the stationary distribution via constructing an OT map between the reference measure (uniform spherical grid) and the empirical stationary distribution and then mapping from the target uniform spherical quantiles to the target quantiles;  2. pick eight points as the current value $x^1, \ldots, x^8$ and then predict the conditional quantiles of the next distribution at each $\bx^m,\ m=1, \ldots, 8$. 
We also provide the conventional VAR(1) one-step-ahead prediction of  conditional means (computed via the~R~package ``vars''). %\vspace{-1mm}
%In Figure \ref{case1_predict}, the center panel depicts the center-outward quantiles and medians of the stationary distribution; each surrounding panel depicts the predictive center-outward quantiles and medians of $X_{t+1}$ given $X_t = x^m$ for a particular $x^m$. The $x^m,\ m=1, \ldots, 8$ are marked in the center panel. Seen from Figure \ref{case1_predict}, the predictive center-outward quantiles and medians of the next state vary according to the current value. In Case 1, most predictive center-outward medians and predictive VAR means are close to each other, potentially due to the fact that within certain range the autoregressive model in \eqref{case1} can be well approximated by a linear model. 

\medskip\vspace{1mm}

%\bigskip  
\noindent
\textbf{Case 2.}  The data-generating equation is 
\begin{equation}\label{case2}
    X_{t+1} \!=\! \begin{bmatrix}
        \tanh\Big(\frac{1}{2}\big(X_t^1 + X_t^2\big)\Big) - \frac{1}{2}\\         
        \cos\Big(\frac{\pi}{10} f\big(X_t^1 + X_t^2\big) \Big)
        \end{bmatrix} +
        \frac{\|X_t\|}{2} \, {\varepsilon}, \quad\!\! \tanh(x)= \frac{e^x-e^{-x}}{e^x+e^{-x}}, \quad\!\! f(x) = \frac{x}{1+|x|} 
        \end{equation}
       where $
     %  \[
       {\varepsilon} \sim \text{Unif}[-1,1] \times \text{Unif}[-1,1], \quad X_0 \sim \text{Unif}[-1,1] \times \text{Unif}[-1,1]$. \vspace{2mm}

    \begin{figure}[b!]
    \centering
    \includegraphics[width=0.8\linewidth, height= 0.3\linewidth]{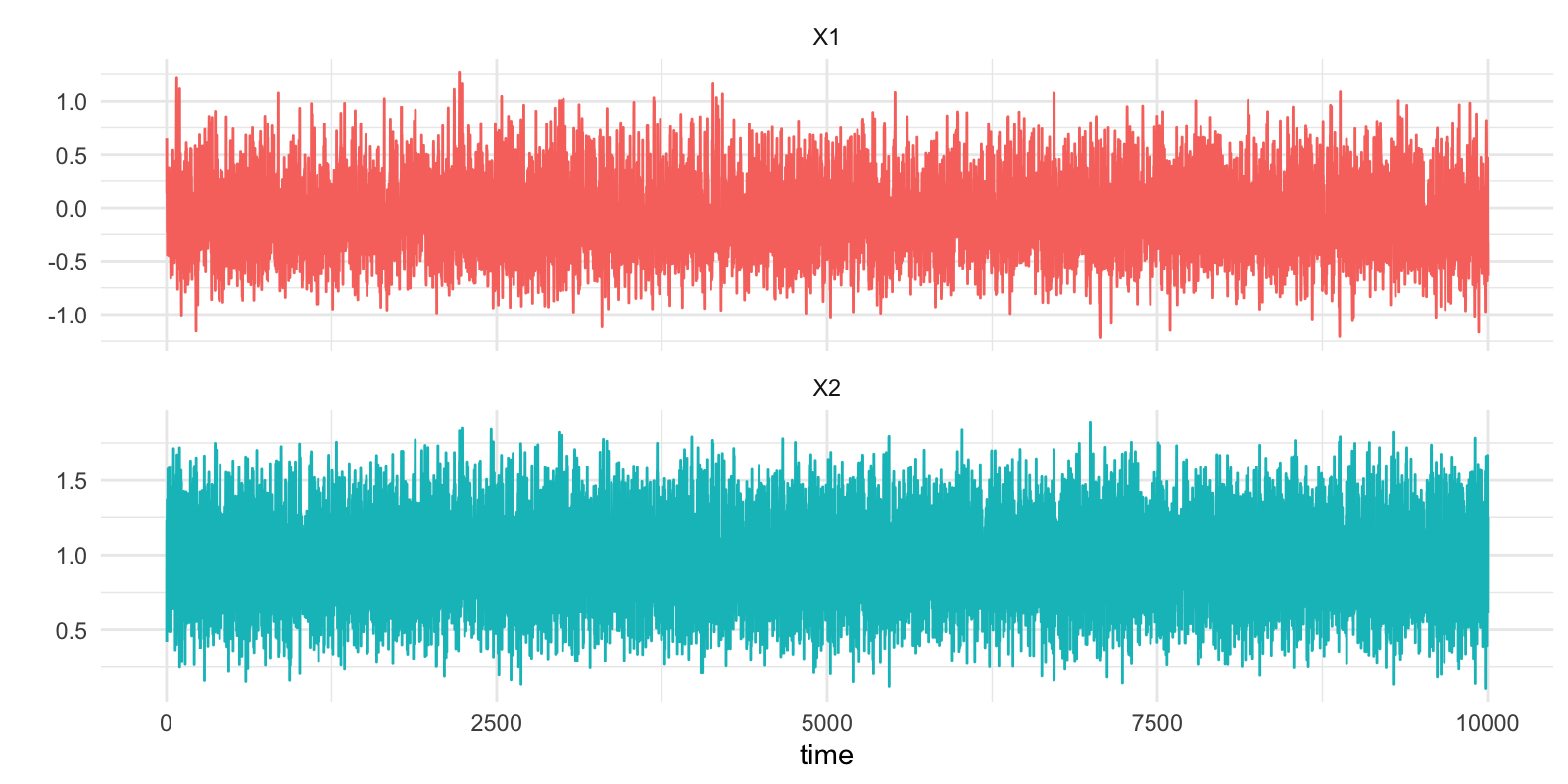}\vspace{-3mm}
    \caption{(Case 2) Simulated trajectories of the first (red)  and second (blue) components of~$X$ for~$T=10,000$.\vspace{1mm}}
    \label{case2_ts}
\end{figure}

\begin{figure}[t!]
    \centering
    \includegraphics[width=0.9\linewidth]{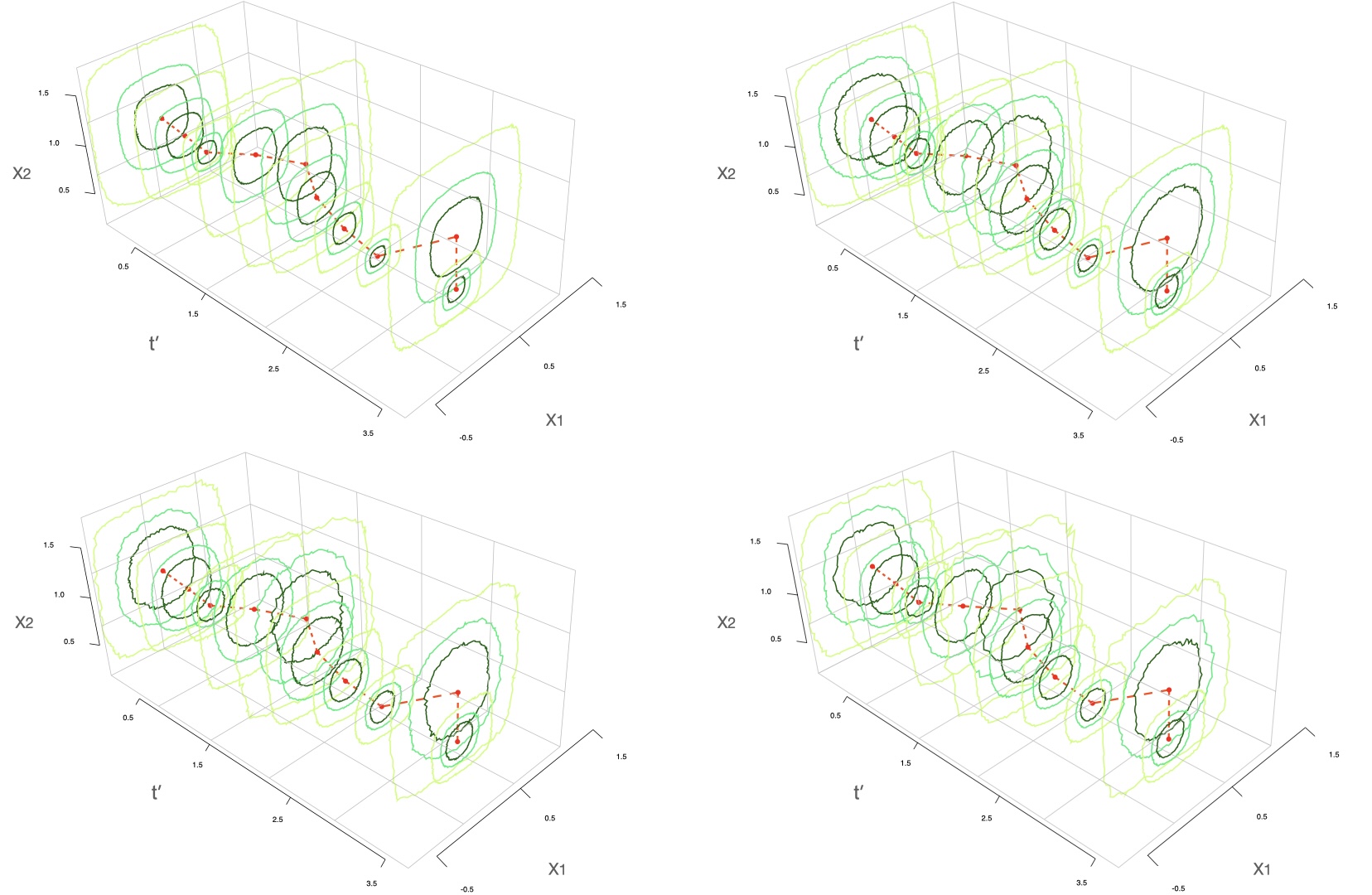}\vspace{-1mm}
    \caption{(Case 2) The conditional center-outward quantile contours of orders $\tau=0.2$ (dark green), $0.4$ (green), and $0.8$ (light olive), along with the conditional median (red) at randomly selected time points,  with  sample sizes $T=800,000$ (upper right panel), $T=~\!80,000$ (lower left panel), and $T=40,000$ (lower right panel).  The chosen  kernel bandwidths\linebreak are %chosen as
      $h=0.4 \times \text{average pairwise distance}$. \vspace{-3mm}}
    \label{case2_sample_sizes}
\end{figure}
\begin{figure}[H]
    \centering
    \includegraphics[width=0.9\linewidth]{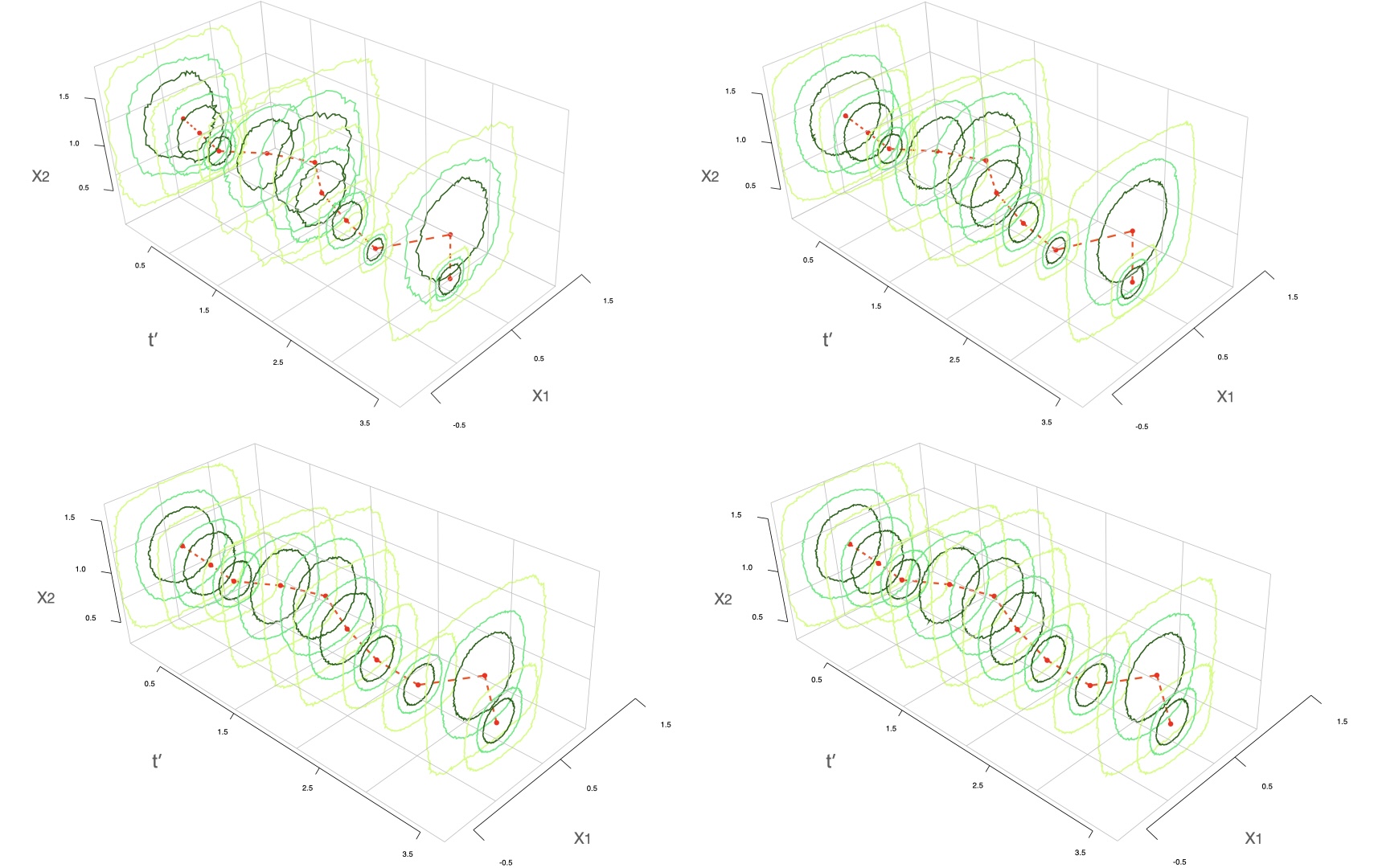}\vspace{-1mm}
    \caption{(Case 2)  Estimated conditional center-outward quantile contours and medians for fixed sample size $T=800,000$, based on  kernel bandwidths  $h=\ell\times \text{average pairwise distance}$, with $\ell = 0.2$ (upper left panel),   $\ell =0.4$ (upper right panel),  $\ell =1.2$  (lower left panel), and~$\ell =3.0$ (lower right panel).}\vspace{-11mm}
    \label{case2_Kwidths}
\end{figure}
  \pagenumbering{gobble}
\begin{figure}[b!]
    \centering
    \includegraphics[width=0.8\linewidth, height=0.8\linewidth]{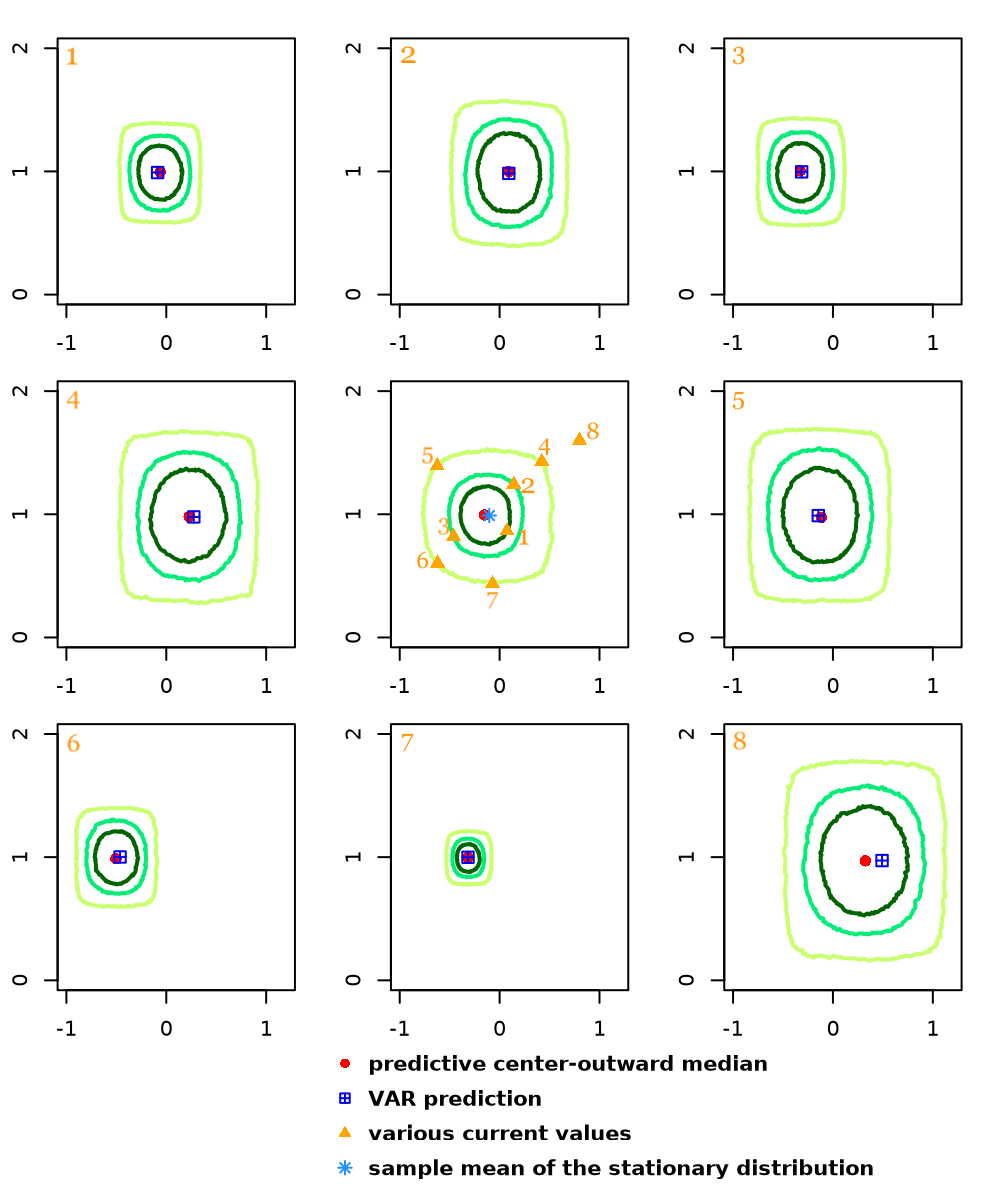}\vspace{-1mm}
    \caption{
(Case 2) The estimated one-step-ahead conditional quantile contours and medians at selected current values.  The central panel shows the estimated center-outward quantiles of orders $\tau=0.2$ (dark green), 0.4 (green), 0.8 (light olive), the center-outward median (red), and the sample mean (light blue) of the (unconditional) stationary distribution, and the eight current values (orange) at which quantile prediction is implemented in the surrounding panels. The surrounding panels show the one-step predictive center-outward quantile contours of order $\tau=0.2$ (dark green), 0.4 (green), 0.8 (light olive), the conditional center-outward median (red), and the conventional VAR(1) one-step-ahead mean prediction (blue) at these eight particular current values.
    }
    %%%%%%%%%%%%%\vspace{-4mm}
    \label{case2_predict}
\end{figure}
\newpage 
$\,$\vspace{-10mm}
\pagenumbering{arabic} \setcounter{page}{17}

Figure \ref{case2_ts} depicts the marginal trajectories generated by \eqref{case2}, which look globally stationary but exhibit potential conditional heteroscedasticity.  
Figure \ref{case2_sample_sizes} shows how estimation accuracy improves with increasing $T$. The innovation here is not spherical, so the transport map in (ii) %between the spherical uniform  and the target conditional distribution
 has no explicit form (as in Case~1); instead, the contours in the upper left panel are obtained via  simulation---at every $x_t$,  a large sample of $X_{t+1}$ values is generated from the actual model \eqref{case2}, from which the conditional center-outward quantile function is estimated.

%In Case 2, the optimal kernel width is $0.4\times \text{the average pairwise distance}$. 
Figure \ref{case2_Kwidths} visualizes the estimated conditional quantiles for various kernel band\-widths of the form~$h=\ell \times \text{average pairwise distance}$, $\ell =0.2$, 0.4, 1.2, and 3.0. The best results (in terms of squared deviations from the quantiles in the upper left panel) are obtained for~$\ell =~\!0.4$. 
%A smaller $h$ captures the conditional heteroskedasticity but produces a few distortions. A larger $h$ produces smoother quantiles but fails to capture conditional heteroskedasticity in the sense that the quantiles at different time points are more homogeneous.

The one-step-ahead predictions at a variety of current values are shown in Figure~\ref{case2_predict}: the predictive center-outward quantiles and medians wildly vary with the current values: compare, for instance, current values 7 and 8. Some predicted center-outward medians also are closer to the true median/mean (computed based on \eqref{case2}) than the corresponding VAR(1)-prediction, because the model in \eqref{case2} is highly nonlinear, a feature   traditional  VARs cannot account for.

\bigskip    
\noindent
\textbf{Case 3.} The data-generating equation is 
\begin{equation}\label{case3}
    X_{t+1} = \begin{bmatrix}
         \dfrac{\log(\|X_t\|+2)}{\|X_t\|+2} \vspace{1mm}
         \\
         \dfrac{\|X_t\|}{\|X_t\|+ \sqrt{2}}
        \end{bmatrix} +
        \sqrt{\|X_t\|+1}\, %\cdot 
        {\rm R}(t)\,%\cdot
       {\varepsilon}_t, \quad X_0 \sim N( 0 , {\rm I})
\end{equation}  
where ${\rm R}(t)$ is the rotation matrix of angle $\pi t/5000$ and the $ {\varepsilon}_t$'s are i.i.d.\ with distribution 
\begin{equation*}%\begin{align}
\label{gaussian_mixture}
  %  {\varepsilon}\ \sim\ &
    \frac{1}{4}N\big( 0 , \frac{1}{25}{\rm I}\big) + \frac{1}{4}N\big((0.866, -0.5)^\top, \frac{1}{25}{\rm I}\big) + %\nonumber\\
 %   &
    \frac{1}{4}N\big((-0.866, -0.5)^\top, \frac{1}{25}{\rm I}\big) + \frac{1}{4}N\big((0,1)^\top, \frac{1}{25}{\rm I}\big)
\end{equation*}%{align}
(a  clover-shaped mixture of four independent Gaussians; see Figure \ref{nonconvex} for a scatterplot). 
\begin{figure}[h!]
    \centering
    \includegraphics[width=0.35\linewidth, height=0.3\linewidth]{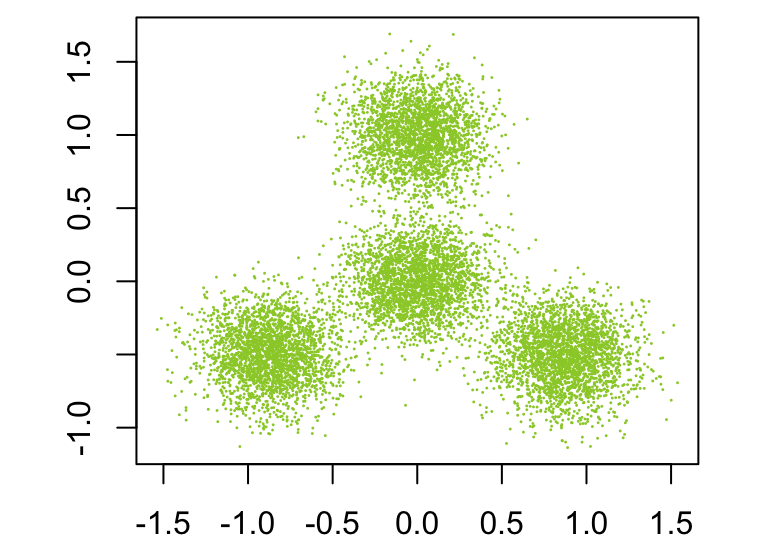}
    \caption{(Case 3) A sample from the mixture distribution of $\varepsilon _t$.% exhibit the shape of Gaussian mixture generated by \eqref{gaussian_mixture}. 
    }\vspace{-4mm}
    \label{nonconvex}
\end{figure}
\begin{figure}[h!]
    \centering
    \includegraphics[width=0.8\linewidth, height= 0.3\linewidth]{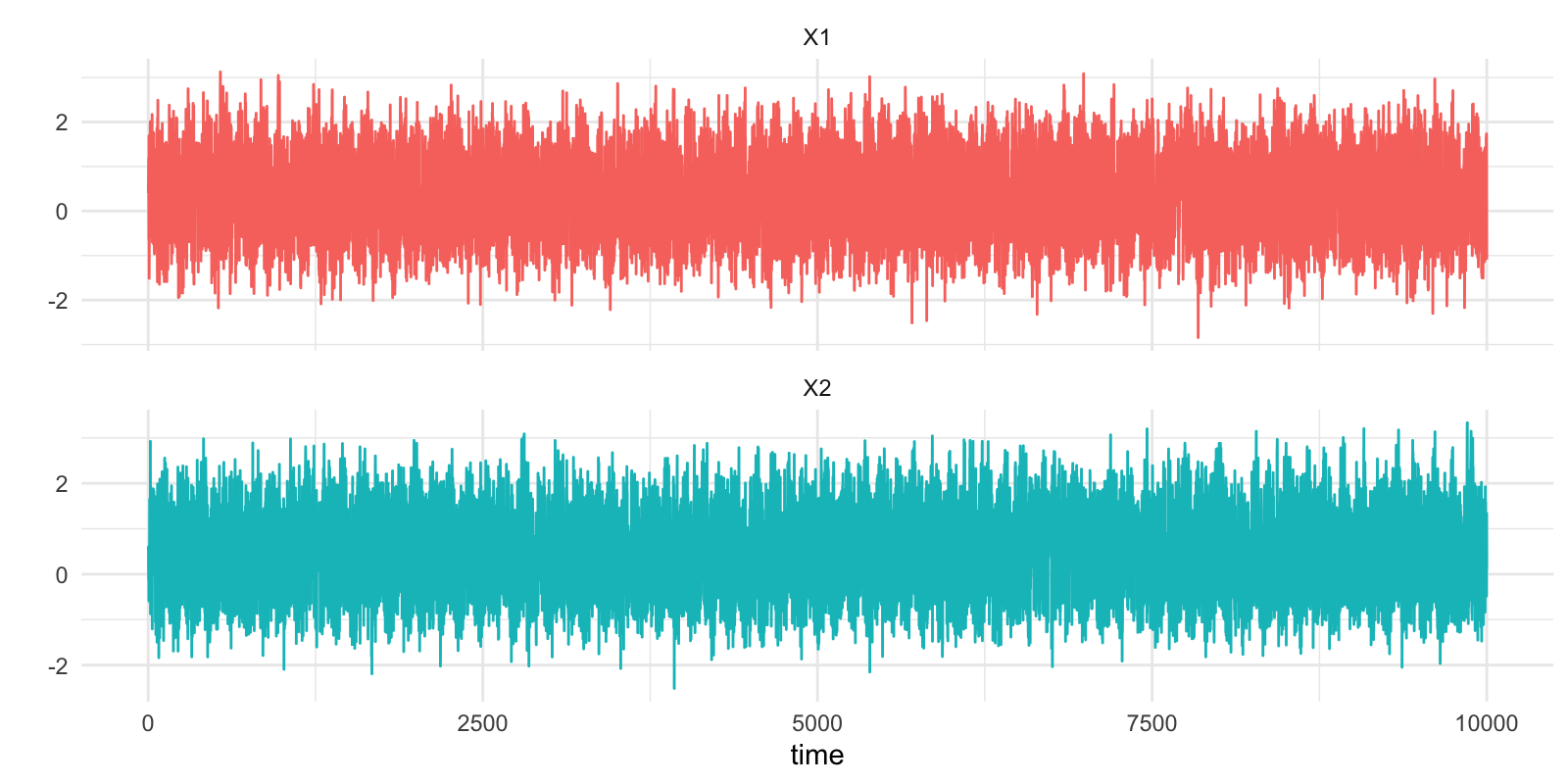}
    \caption{(Case 3) Simulated trajectories of the first (red)  and second (blue) components of~$X$ for~$T=10,000$. }\vspace{-4mm}
    \label{case3_ts}
\end{figure}

\newpage

$\,$\vspace{-23mm}

\begin{figure}[h!]
    \centering
    \includegraphics[width=0.9\linewidth]{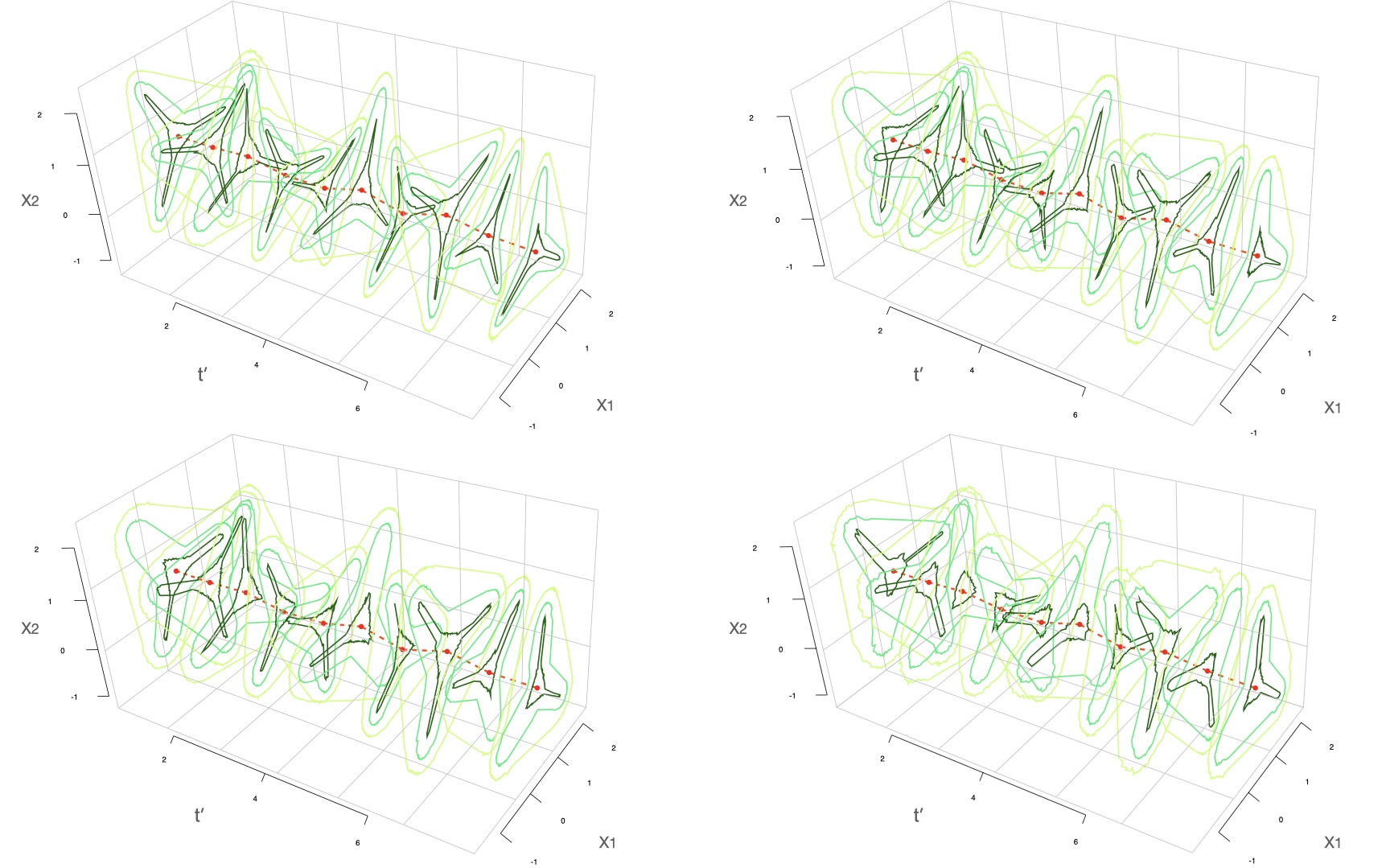}\vspace{-1mm}
    \caption{(Case 3) The empirical conditional center-outward quantile contours of orders~$\tau=~\!0.2$ (dark green), 0.4 (green), 0.8 (light olive), and conditional median (red) at randomly selected time points with different sample sizes $T=2,000,000$ (upper left panel),~$T=800,000$  (upper right panel), $T=80,000$ (lower left panel), and $T=40,000$ (lower left panel). Kernel bandwidths were chosen as $h=0.1 \times \text{average pairwise distance}$.} \vspace{-2mm}
        \label{case3_sample_sizes}
\end{figure}
\begin{figure}[H]
    \centering
    \includegraphics[width=0.9\linewidth]{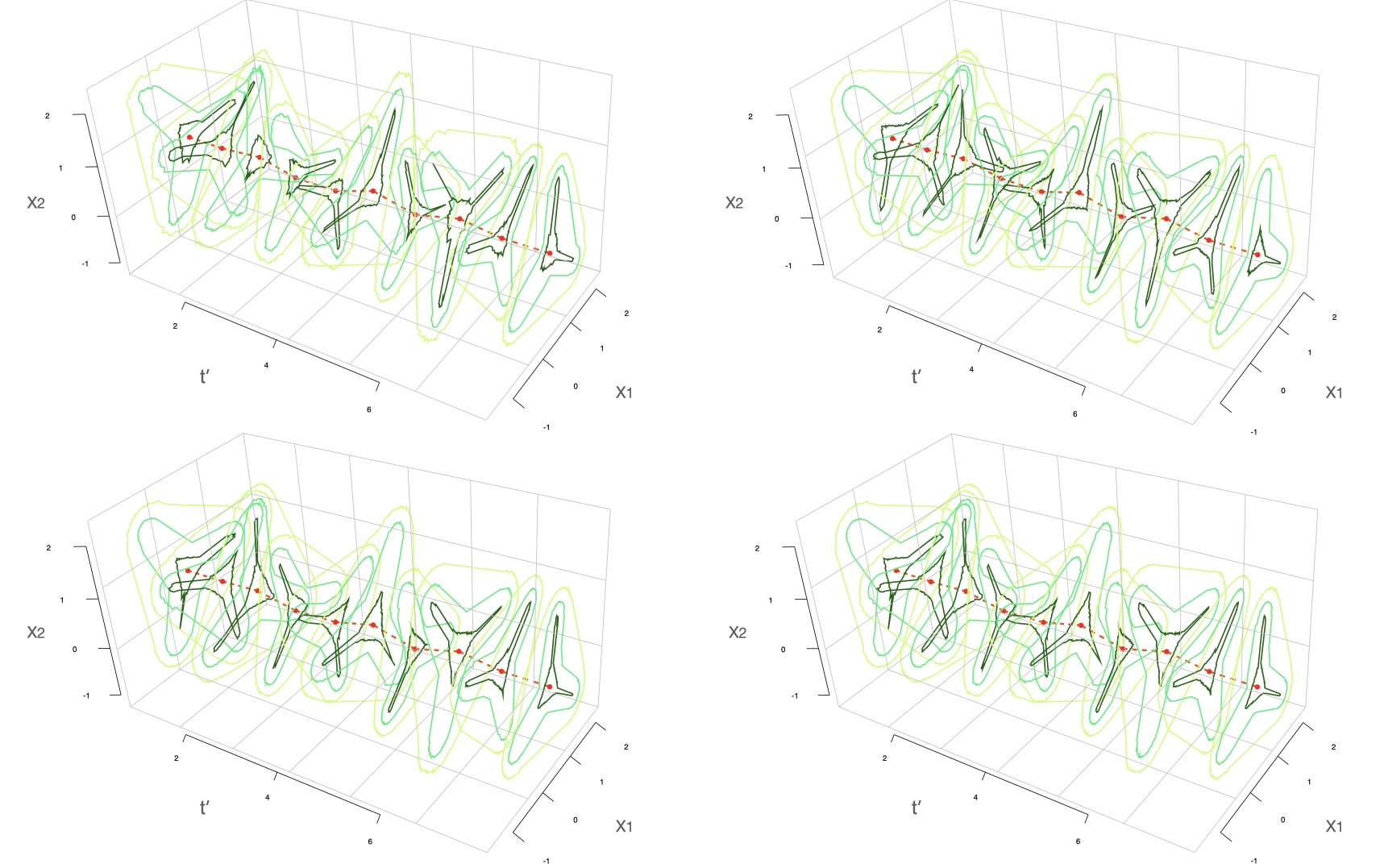}\vspace{-1mm}
    \caption{(Case 3) Estimated conditional center-outward quantile contours and medians for fixed sample size $T=800,000$, based on  kernel bandwidths  $h=\ell\times \text{average pairwise distance}$, with $\ell = 0.03$ (upper left panel),   $\ell =0.1$ (upper right panel),  $\ell =1.0$  (lower left panel), and~$\ell =2.0$ (lower right panel).}\vspace{-3mm}
    \label{case3_Kwidths}
\end{figure}

\newpage
$\,$\vspace{-8mm}

Figure \ref{case3_ts} shows the marginal trajectories generated by \eqref{case3} which, misleadingly,  look globally stationary---marginal stationarity does not imply    joint stationarity, though.  

Case~3, however, differs significantly from Cases 1 and 2.
 %Case 3 yields some important differences from Cases 1 and 2. 
First, as shown in Figure~\ref{nonconvex}, the distribution of ${\varepsilon}$ has a highly nonconvex shape. Second, due to the $t$-dependent  rotation ${\rm R}(t)$,  the distribution of $X_t$ is not  even asymptotically stationary (it is, however, asymptotically stationary %{\it up to rotation,}  that is,
% if we let
 for~${\rm R}(t)={\bf I}$: see Appendix~B. The  conditions for consistency, thus, are violated. Ignoring this fact, we ran our method as in Cases~1 and~2 to obtain Figures~\ref{case3_sample_sizes} and~\ref{case3_Kwidths}.
 %  \textcolor{red}{DISREGARD WHAT FOLLOWS FOR CASE 3.}

%Ignoring this fact, we ran our method as in Cases~1 and~2 to obtain Figures~\ref{case3_sample_sizes} and~\ref{case3_Kwidths}.

\begin{figure}[b!]
    \centering
    \includegraphics[width=0.8\linewidth, height=0.8\linewidth]{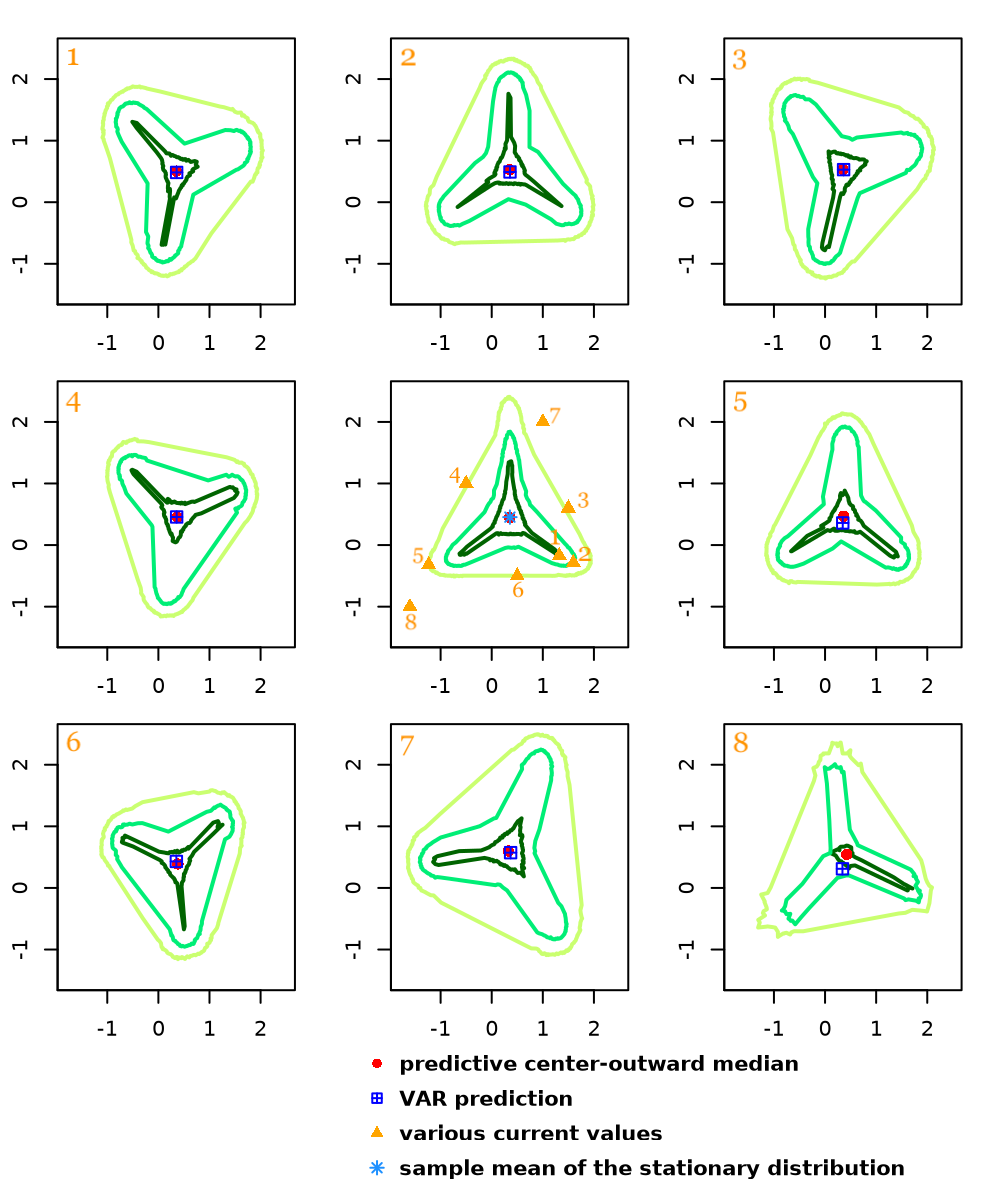}
    \caption{
    (Case 3) The estimated one-step-ahead conditional quantile contours and medians at selected current values $x_T$ of $X_T$, $T=20,000$.    The central panel shows the estimated unconditional center-outward quantiles of orders $\tau=0.2$ (dark green), 0.4 (green), 0.8 (light olive), the center-outward median (red), and the sample mean (light blue) at time $T$ 
    %of the \textcolor{red}{unrotated (that is, with $R(t)=\mathbf I$}) asymptotically stationary distribution,
     and (orange) the eight current values $x_T$  at which one-step-ahead quantile prediction is implemented in the surrounding panels. The surrounding panels show the one-step predictive center-outward quantile contours of orders $\tau=0.2$ (dark green), 0.4 (green), 0.8 (light olive), the conditional center-outward median (red), and the conventional VAR(1) one-step-ahead mean prediction (blue) at these eight particular current values.}
    \label{case3_predict}
\end{figure}
The accuracy of the estimation is investigated in Figure \ref{case3_sample_sizes}. The transport  map in Step~2 does not have an explicit form; therefore, as in Case~2, we approximate the theoretical conditional quantiles and medians at each selected~$t$ via   a  very large ($T_0=2,000,000$) simulated  sample of~$X_{t+1}$ values based on the actual data-generating equation~\eqref{case3}.  The result of this simulation, shown in the upper left panel,  can be used as a benchmark. Although the conditions for consistency are not met, the quality of the approximation in the three other panels of Figure \ref{case3_sample_sizes}  (with kernel bandwidth  $h=0.1 \times \text{average pairwise distance}$) is surprisingly good and nicely picks up both the clover shape of the conditional quantile contours and their orientation; quite understandably, that quality improves as $T$ increases. %Since $\varepsilon_t$  is not spherically distributed, 
%and then estimating the center-outward quantile function $\widehat{\boldsymbol{Q}}_{X_{t+1}|X_t}$. } 

 The impact of the bandwidth choice is illustrated in Figure \ref{case3_Kwidths}, with bandwidths of the form~$\ell\times\text{average pairwise distance}$ for $\ell =0.003$, 0.1,  1.0,  and 2.0; the best results (shown in Figure \ref{case3_sample_sizes}) are obtained for~$\ell = 0.1$. 
 Such a relatively small $h$ adequately captures the clover-like shape of the conditional distributions but produces somewhat rugged contours. A larger $h$ yields smoother contours while slightly blurring their shapes. 
 
The influence of the current unconditional quantile value on the corresponding one-step ahead predictive contours is studied in Figure~\ref{case3_predict}. Since stationarity does not hold (not even approximately), the central panel provides an estimation of the unconditional contours of~$X_T$ for $T=20,000$. The surrounding panels are obtained as in Cases~1 and~2; note that the estimations they are providing are the same for all values of $t$, hence for the predictive contours of $X_{T+1}$ computed at time $t=T$.  
Inspection of Figure~\ref{case3_predict} reveals that while the predictive center-outward  medians for $X_{T+1}$ are essentially the same (and coincide with the  center-outward  median of the current value of $X_T$) for all current values $x_T$, the quantile contours wildly vary a lot with the current unconditional quantile value at time $T$. 

This example demonstrates the considerable added value of our method: as far as the central value of $X_{T+1}$ is concerned, the predictive power of the current value $x_T$, hence of point predictors of $X_{T+1}$, is essentially nil; the same   current value $x_T$ of $X_T$, however, carries a great deal of information on the quantile contours of $X_{T+1}$. This has crucial implications, for instance, when forecasting risk levels at time $T+1$. 
%Moreover, the predictive center-outward median and predictive VAR mean are not close to each other, due to the fact that the model in \eqref{case3} is far from linear. 

\subsection{A real data analysis}\label{realsec}

We implemented our method to analyze a dataset of electroencephalogram (EEG) time series from Alzheimer's disease (AD) patients,  Frontotemporal Dementia (FTD) patients, and healthy (CN) controls. EEG is a non-invasive neurophysiological technique that records the brain’s electrical activity along a certain period of time via electrodes placed on the scalp. Each electrode keeps track of the synchronous electrical signals generated by the cerebral cortex area underneath it.
Our goal is to detect alterations in EEG signals and connectivity patterns between different brain regions in AD and FTD patients. Unlike the traditional univariate quantile autoregressive methods, our multivariate quantiles are  capturing the joint distributions of interrelated variables, hence are better able to detect and predict alterations in brain connectivity patterns. 

Alzheimer’s disease (AD) is a chronic, progressive neurodegenerative disorder and one of the most common incurable diseases \citep{safiri2024alzheimer}. It typically begins with memory loss, gradually affecting language, reasoning, and behavior, ultimately impairing daily functioning. % \cite{safiri2024alzheimer}.
Currently, more than 50 million people worldwide live with AD, imposing huge care and economic burden.  
Frontotemporal Dementia (FTD) is a group of neurodegenerative disorders that primarily affect the frontal and temporal lobes of the brain---the areas responsible for personality, behavior, decision making, and language \citep{bang2015frontotemporal}. 
It often occurs earlier than AD, typically between 45 and 65 years old. 
Unlike Alzheimer’s disease, memory is often preserved at the beginning, and the earliest signs tend to be changes in behavior, personality, or language ability \cite{bang2015frontotemporal}. 
The progression is featured by spreading atrophy from frontal/temporal lobes to other brain regions, leading to more global cognitive decline.
Studying the disease mechanisms and evolution/progression of AD and FTD would allow early detection/prevention, thereby facilitating appropriate treatment.  
In this section, we compare the EEG signal trajectories of AD patients and FTD patients, respectively, to that of healthy subjects (CN) to detect potential disease-specific signatures.

We explore two datasets from OpenNEURO repository (https://openneuro.org/), a public platform for brain imaging data. The first one is titled ``A dataset of EEG recordings from Alzheimer's disease, Frontotemporal Dementia, and Healthy subjects", available at https://openneuro.org/datasets/ds004504/versions/1.0.8. It contains the EEG resting state (closed eyes) recordings from 88 subjects, among whom 36 were diagnosed with Alzheimer's disease (AD group), 23 with Frontotemporal Dementia (FTD group), and 29 were healthy subjects (CN group). Assume that within each group, the observed time series are independent realizations of the same process. 
For recording, the 10-20 International System with 19 scalp electrodes (Fp1, Fp2, F7, F3, Fz, F4, F8, T3, C3, Cz, C4, T4, T5, P3, Pz, P4, T6, O1, and O2) were used and two reference electrodes (A1 and A2) were placed on the mastoids for impedance check. Each recording was performed according to the clinical protocol with participants in a sitting position and their eyes closed. 
The 19 electrodes are positioned at specific scalp locations, approximately corresponding to 19 brain regions (see Table \ref{tab:EEG_electrode_grouping}).
The second dataset is titled  ``A complementary dataset of open-eyes EEG recordings in a photo-stimulation setting from: Alzheimer's disease, Frontotemporal Dementia, and Healthy subjects'', available at https://openneuro.org/datasets/ds006036/versions/1.0.5. It provides eyes-open EEG recordings of the same cohort in multiple photic stimulations, complementary to the first dataset. All  EEG recordings have length $T$ between $150,000$ and $160,000$.\medskip% \bigskip  

\begin{table}[ht]
\centering
\begin{tabular}{|l|p{4cm}|p{6cm}|}
\hline
\textbf{Functional Region} & \textbf{Electrodes Included} & \textbf{Approximate Brain Functions} \\
\hline
Frontal & Fp1, Fp2, F7, F8, F3, F4, Fz & Executive functions, decision-making, attention, working memory, motor planning \\
\hline
Central & C3, Cz, C4 & Primary motor cortex,  somatosensory processing \\
\hline
Temporal & T3, T4, T5, T6 & Auditory processing, language comprehension, memory \\
\hline
Parietal & P3, Pz, P4 & Sensory integration, spatial orientation,  attention \\
\hline
Occipital & O1, O2 & Visual processing \\
\hline
\end{tabular}
\caption{Grouping of 19 standard EEG scalp electrodes into functional regions with specific brain functions. Odd-numbered electrodes are on the left hemisphere, even-numbered ones on the right hemisphere. Electrodes with ``z'' are located along the midline.}
\label{tab:EEG_electrode_grouping}\vspace{-0mm}
\end{table}

We fit distinct nonparametric vector quantile autoregressive model for each group of subjects. Denote by $ \boldsymbol{A}^i\coloneqq\{A^i_t\ , t=1,\ldots, T^{i}_{A}\}$,  $i=1, \ldots, N_A$, $\boldsymbol{F}^i\coloneqq\{F^i_t,\  t=1,\ldots, T^{i}_{F}\}$, \linebreak $i=1, \ldots, N_F$, and $ \boldsymbol{C}^i\coloneqq\{C^i_t,\ t=1,\ldots, T^{i}_{C}\}$,  $i=1, \ldots, N_C$, respectively,  the EEG time series of the $i$-th subject within the AD, FTD, and CN groups. 
The procedure is as follows.\smallskip

\begin{compactenum}
    \item[(i)] Step 1: compute a consensus or representative time series $\boldsymbol{A}^* \coloneqq \{A^*_t\ , t=1,\ldots, T^*_{A}\} $ of $\{ \boldsymbol{A}^i \}_{i=1}^{N_A}$ via the R package ``dtwclust''. This representative time series is the DTW barycenter averaging \citep{petitjean2011global} of all the series within $\{ \boldsymbol{A}^i \}_{i=1}^{N_A}$. Similarly, compute $\boldsymbol{F}^*$ and $\boldsymbol{C}^*$, respectively, for $\{ \boldsymbol{F}^i \}_{i=1}^{N_F}$ and $\{ \boldsymbol{C}^i \}_{i=1}^{N_C}$.
    \item[(ii)] Step 2: align the time series within $\{ \boldsymbol{A}^i \}_{i=1}^{N_A}$, the time series within $\{ \boldsymbol{F}^i \}_{i=1}^{N_F}$, and the time series within $\{ \boldsymbol{C}^i \}_{i=1}^{N_C}$ to $\boldsymbol{A}^*$, $\boldsymbol{F}^*$, and $\boldsymbol{C}^*$, respectively, via the R package ``dtw'' \citep{giorgino2009computing}.
    \item[(iii)] Step 3: apply the method in Section \ref{Section:Estimation} (with \eqref{NW-estimatorn} instead of \eqref{NW-estimator}) to $\{\boldsymbol{A}^i\}_{i=1}^{N_A}$; this~yields \vspace{-1mm} 
\begin{equation*}
    \widehat{\rm P}_{A_{t+1} \vert A_t=A^*_t} = \frac{1}{N_A}\sum_{i=1}^{N_A}\sum_{t=1}^{T^i_A -1}  w_{t+1}^i(A^*_t) \cdot \delta_{A^i_{t+1}}, \quad \text{with}\quad w_{t+1}^i(A^*_t) = \frac{ K\left(\frac{A_t^i-A^*_t}{h} \right) }{\sum_{t=1}^{T^i_A-1}  K\left(\frac{A_t^i-A^*_t}{h} \right) }\vspace{-2mm}
\end{equation*}
(all other steps remain unchanged);  the current values to be conditioned on are the values of~$A^*_t$,  $t\in [1,T_A^i -1] $.
\item[(iv)] Proceed similarly with $\{ \boldsymbol{F}^i \}_{i=1}^{N_F}$ and $\{ \boldsymbol{C}^i \}_{i=1}^{N_C}$.\smallskip
%The part of estimating the empirical transport plan between the uniform spherical grid and $ \widehat{\rm P}_{A_{t+1} \vert A_t=A^*_t} $ (respectively,~$ \widehat{\rm P}_{F_{t+1} \vert F_t=F^*_t} $ and $ \widehat{\rm P}_{C_{t+1} \vert C_t=C^*_t} $), the part of cyclically monotone interpolation, etc. remains unchanged. 
\end{compactenum}

To visualize the conditional quantiles evolving over time, we fit the nonparametric vector quantile autoregressive model on pairs of EEG waves from different electrodes one by one. For example, we may pick the EEG signals from (Fz, F4) electrodes as the sample of a time series in $\mathbb{R}^2$. 
Note that our method applies to any fixed dimension, and we are able to fit the EEG waves from the 19 electrodes as a time series in $\mathbb{R}^{19}$. Quantiles in dimension 19, however, cannot be visualized or eye-inspected, and we therefore focus on bivariate series associated with pairs of electrodes. The main findings of our analysis are summarized in Figures~\ref{F1F2_closeeye},~\ref{O1O2_openeye}, and~\ref{LR}. 
\begin{figure}[b!]
    \centering
    \includegraphics[width=0.99\linewidth]{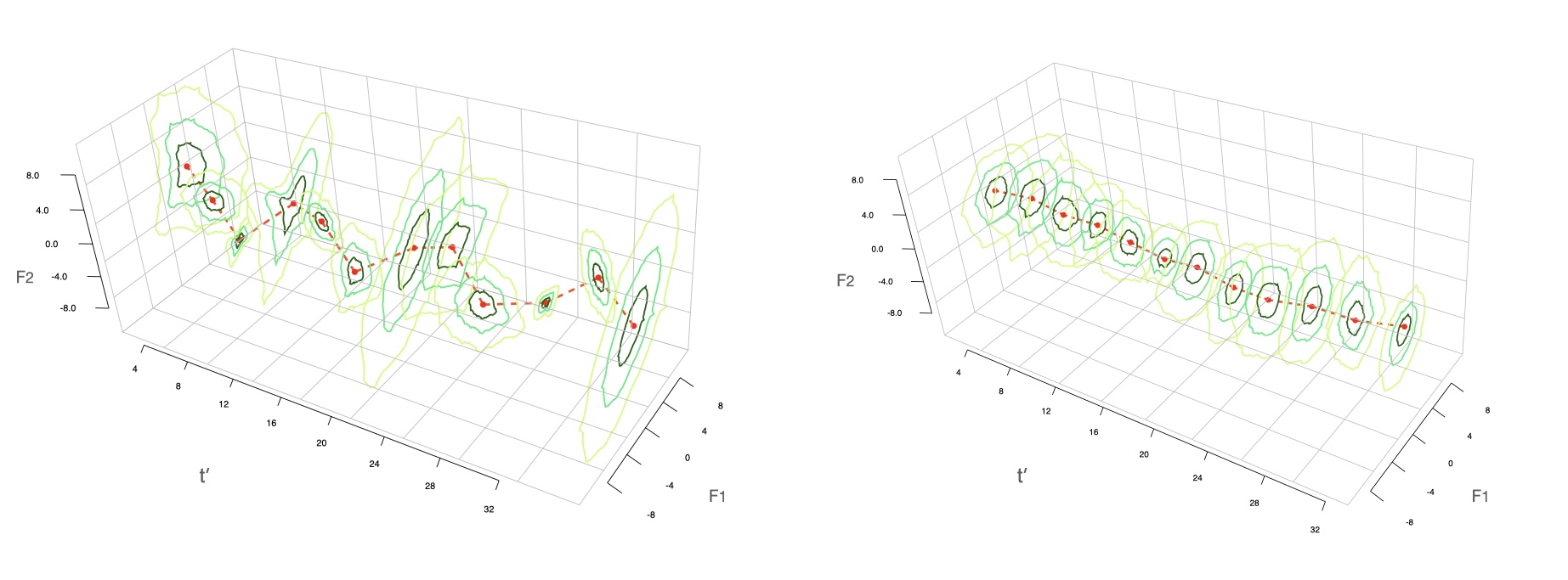}
    \caption{The estimated one-step-ahead conditional quantile contours of orders  $\tau=0.2$ (dark green), 0.4 (green), 0.8 (light olive), and the conditional one-step-ahead median (red) at selected time points for  the (F1, F2) EEG signals in healthy subjects (left panel) and FTD patients (right panel). In both panels, the horizontal axis stands for the rescaled time~$t'=t/50,000$. }
      \label{F1F2_closeeye}
\end{figure}
%\begin{figure}[t!]
%    \centering
%    \includegraphics[width=0.9\linewidth]{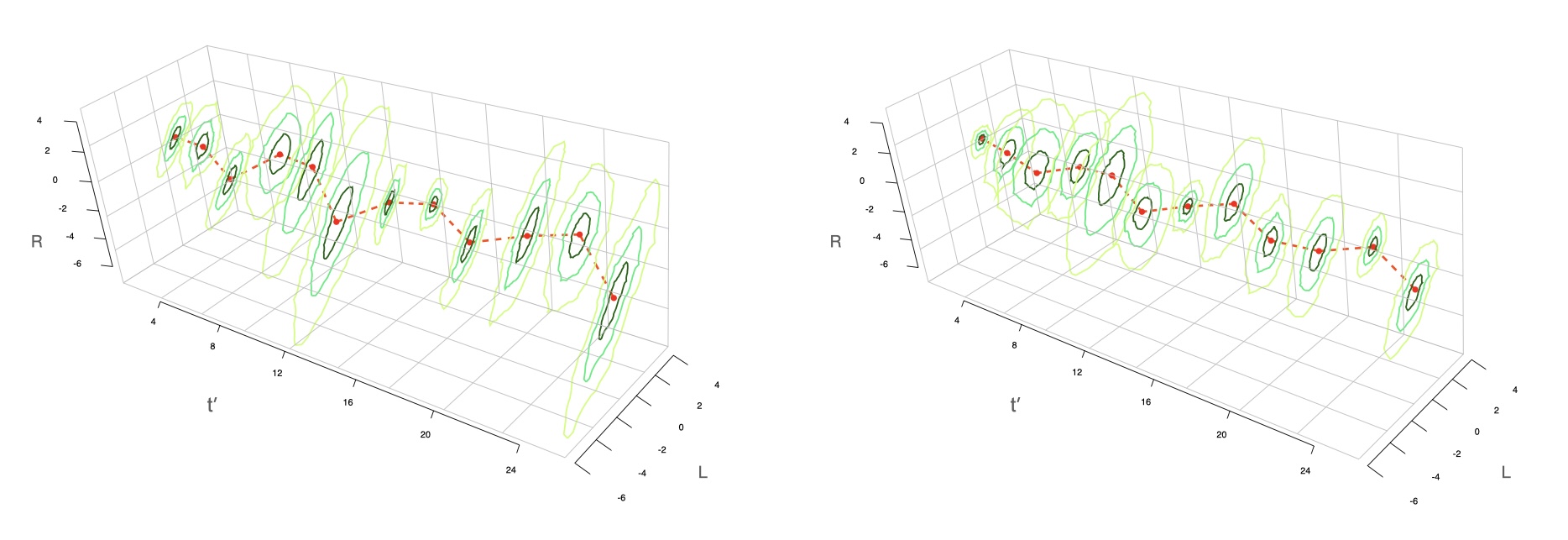}
%    \caption{The estimated one-step-ahead conditional quantile contours of orders~$\tau=0.2$ (dark green), 0.4 (green), 0.8 (light olive), and the conditional medians (red) at selected time points  for the Principal Components of EEG signals in the left and right hemispheres in healthy subjects (left panel) and  AD patients (right panel).  In both panels, the horizontal axis stands for the rescaled time $t'$.}
%    \label{LR}
%\end{figure}%\vspace{-10mm}
\begin{figure}[htbp!]
    \centering
    \includegraphics[width=0.99\linewidth]{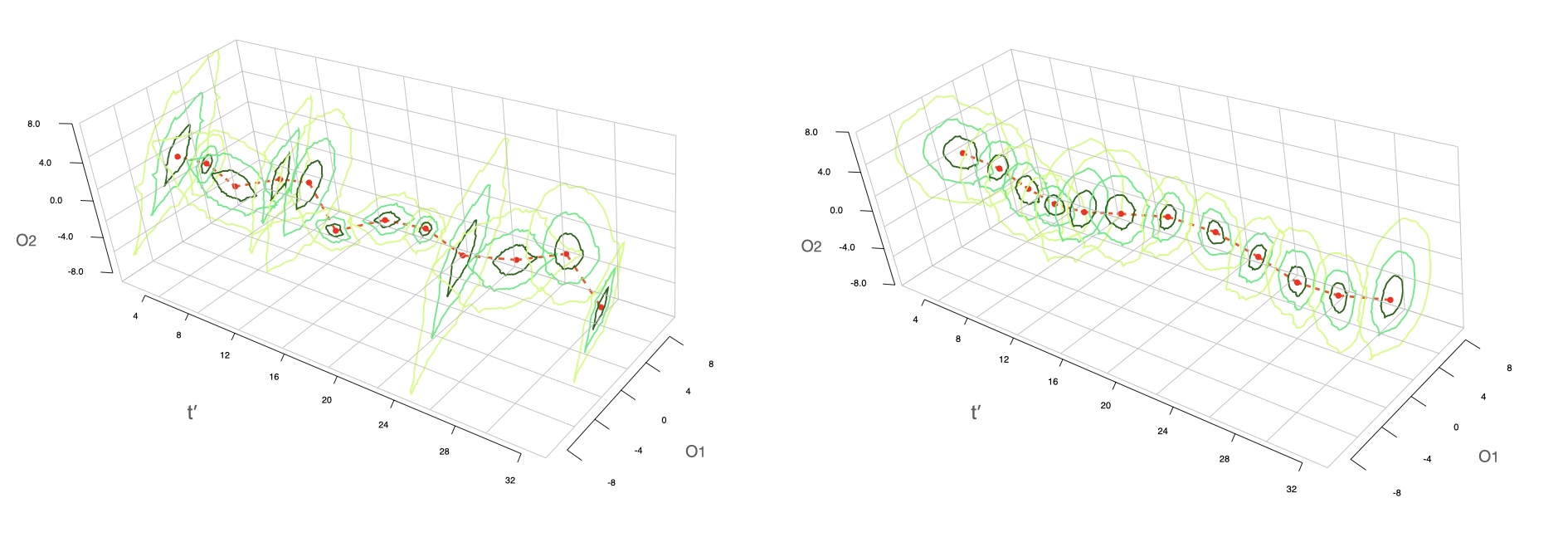}
    \caption{The estimated one-step-ahead conditional quantile contours of orders $\tau=0.2$ (dark green), 0.4 (green), 0.8 (light olive), and the conditional medians (red) at selected time points for the (O1, O2) EEG signals in healthy subjects (left panel) and  AD patients (right panel). In both panels, the horizontal axis stands for the rescaled time $t'=t/50,000$. }\vspace{-0mm}
    \label{O1O2_openeye}
\end{figure}
\begin{figure}[H]
    \centering
    \includegraphics[width=0.99\linewidth]{}
    \caption{The estimated one-step-ahead conditional quantile contours of orders~$\tau=0.2$ (dark green), 0.4 (green), 0.8 (light olive), and the conditional medians (red) at selected time points  for the Principal Components of EEG signals in the left and right hemispheres in healthy subjects (left panel) and  AD patients (right panel).  In both panels, the horizontal axis stands for the rescaled time $t'$.\vspace{-3mm}}
    \label{LR}
\end{figure}

\begin{figure}[t!]
    \centering
    \includegraphics[width=0.8\linewidth, height=0.8\linewidth]{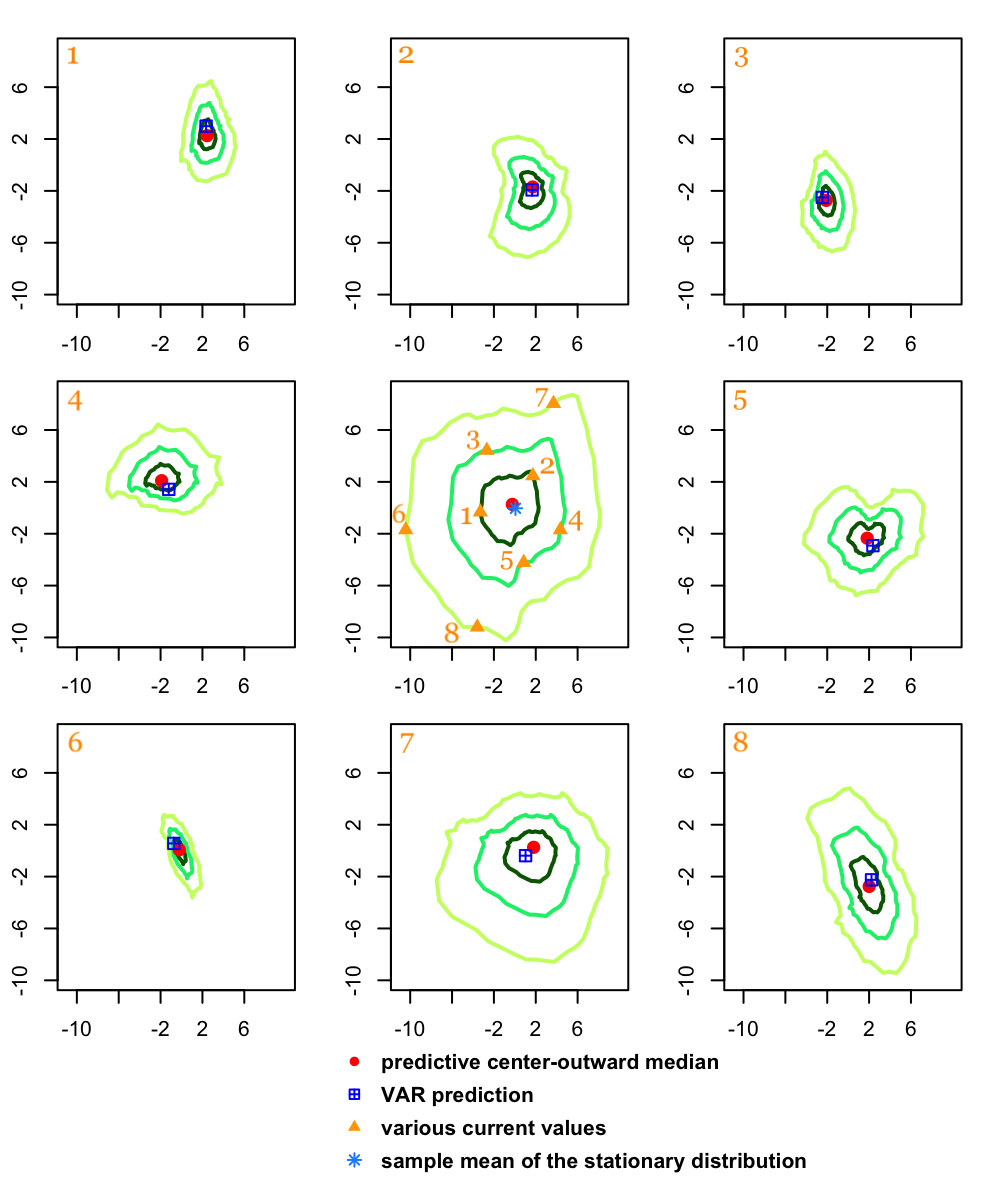}
    \caption{
 {The estimated one-step-ahead conditional quantile contours and medians at selected current values for the (F1, F2) EEG signals in healthy subjects.  The central panel shows the center-outward quantiles of orders $\tau=0.2$ (dark green), 0.4 (green), 0.8 (light olive), the center-outward median (red), and the sample mean (light blue) of the unconditional empirical distribution, and the current values (orange) at which quantile prediction is implemented in the surrounding panels. Each surrounding panel shows the one-step predictive center-outward quantile contours of orders $\tau=0.2$ (dark green), 0.4 (green), 0.8 (light olive), the conditional center-outward median (red), and the conventional VAR(1) one-step-ahead mean prediction (blue) at a particular current value.  } \vspace{-6mm}
    }
    \label{EEG_predictive_F1F2_CN}
\end{figure}

Figure \ref{F1F2_closeeye} compares the EEG signals from the (F1, F2) electrodes in the FTD and CN groups under closed-eye status. We observe that  the (F1, F2) EEG signals in FTD patients exhibit (relative to the CN group of healthy patients)
\begin{compactenum}  
\item[(a)]  {lower variation, with a flat median trajectory and homogeneous quantile contours}; %\vspace{-4mm}
\item[(b)]  less coherence/connectivity between F1 and F2 signals, as attested by the circular shape of  FTD patient's quantile contours; 
\item[(c)]   less entropy (spontaneous activity), with less conditional heteroskedasticity along the trajectory. \smallskip
\end{compactenum}
These findings are consistent with the fact, reported in the literature, that FTD patients have impaired activity and disrupted functional connectivity in their left and right prefrontal cortex \citep{bang2015frontotemporal}.  

\begin{figure}[h!]
    \centering
    \includegraphics[width=0.8\linewidth, height=0.8\linewidth]{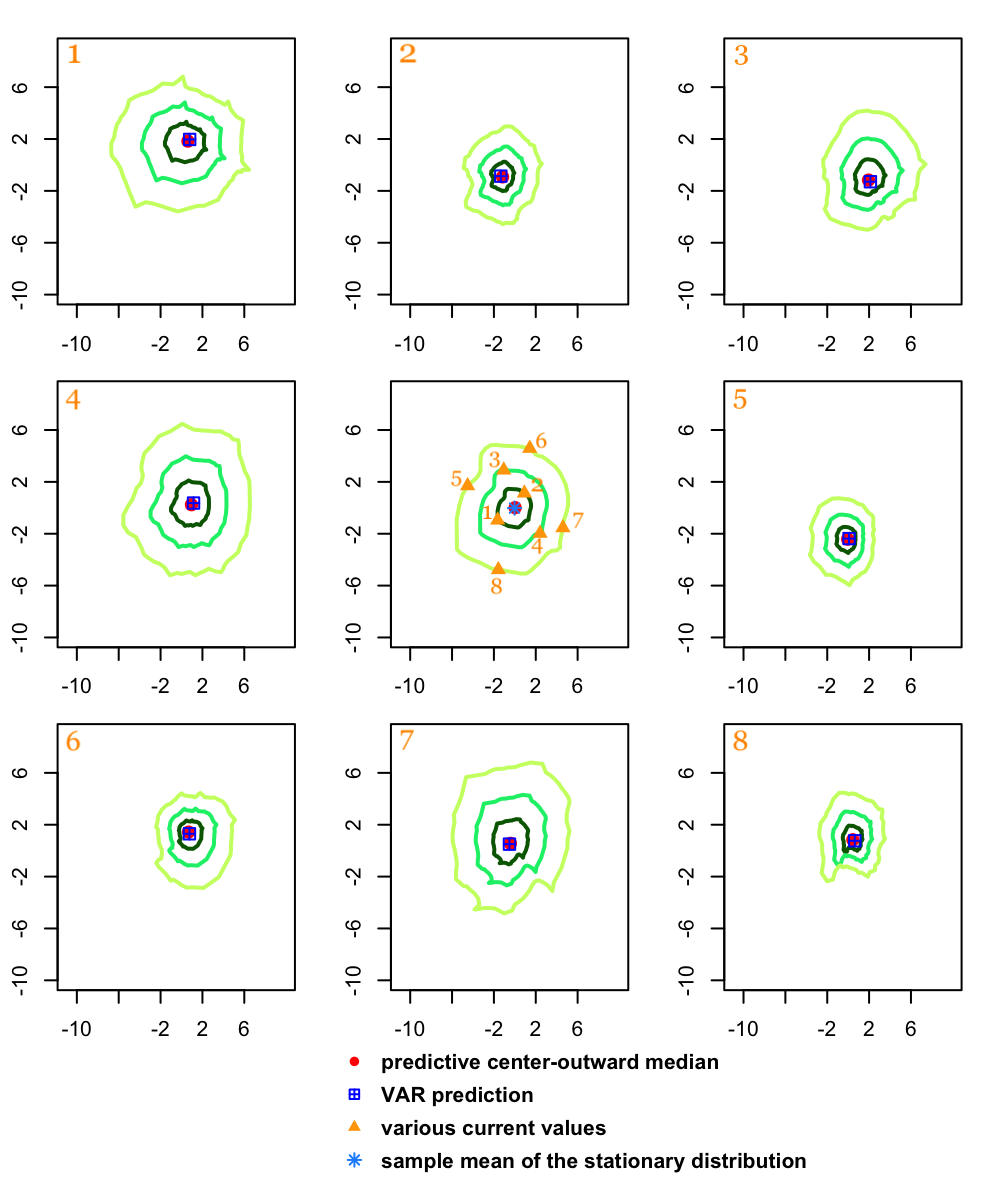}\vspace{-1mm}
    \caption{
{The estimated one-step-ahead conditional quantile contours and medians at selected current values for the (F1, F2) EEG signals in FTD patients.  The central panel shows the center-outward quantiles of orders $\tau=0.2$ (dark green), 0.4 (green), 0.8 (light olive), the  center-outward median (red), and the sample mean (light blue) of the unconditional empirical distribution, and the current values (orange) at which quantile prediction is implemented in the surrounding panels. Each surrounding panel shows the one-step predictive center-outward quantile contours of orders $\tau=0.2$ (dark green), 0.4 (green), 0.8 (light olive), the conditional center-outward median (red), and the conventional VAR(1) one-step-ahead mean prediction (blue) at a particular current value. \vspace{-10mm} } 
    }
    \label{EEG_predictive_F1F2_FTD}
\end{figure}

Figure \ref{O1O2_openeye} compares the EEG signals of (O1, O2) electrodes
in AD patients  and the healthy~CN group under open-eye status. Each time point where quantiles are depicted corresponds to a photic stimulus. 
It shows that (O1, O2) signals in AD patients are 
\begin{compactenum}  
\item[(a)]  less complex (lower entropy); 
\item[(b)]  less responsive to photic stimulations (less dispersion); 
\item[(c)]  with reduced synchronization/connectivity between the O1 and O2 signals (more circular contours). \smallskip
\end{compactenum}

\begin{figure}[h!]
    \centering
    \includegraphics[width=0.8\linewidth, height=0.8\linewidth]{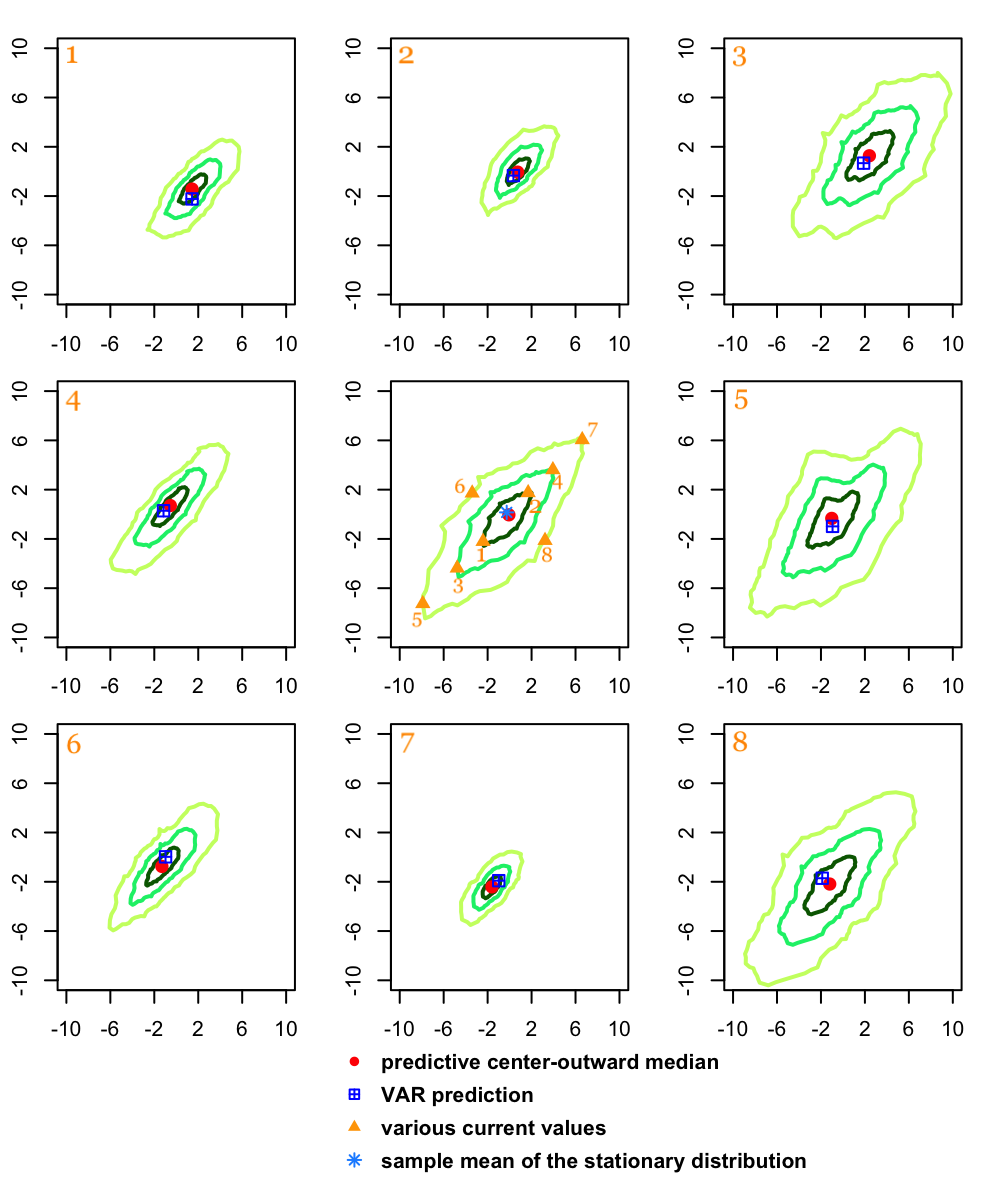}
    \caption{
{The estimated one-step-ahead conditional quantile contours and medians at selected current values for the (O1, O2) EEG signals in healthy subjects.  The central panel shows the center-outward quantiles of orders $\tau=0.2$ (dark green), 0.4 (green), 0.8 (light olive), the center-outward median (red), and the sample mean (light blue) of the unconditional empirical  distribution, and the current values (orange) at which quantile prediction is implemented in the surrounding panels. Each surrounding panel shows the one-step predictive center-outward quantile contours of orders $\tau=0.2$ (dark green), 0.4 (green), 0.8 (light olive), the conditional center-outward median (red), and the conventional VAR(1) one-step-ahead mean prediction (blue) at a particular current value.\vspace{-2mm}} 
    }
    \label{EEG_predictive_O1O2_CN}\vspace{-2mm} 
\end{figure}
Overall, in AD patients, the conditional quantiles/medians are less volatile or oscillating, and more predictable than those from healthy brains. This finding, again, aligns with the conclusions in the literature on AD symptoms \citep{safiri2024alzheimer},  which they complement with a quantitative assessment.  
%\vspace{-10mm}

\begin{figure}[h!]
    \centering
    \includegraphics[width=0.8\linewidth, height=0.8\linewidth]{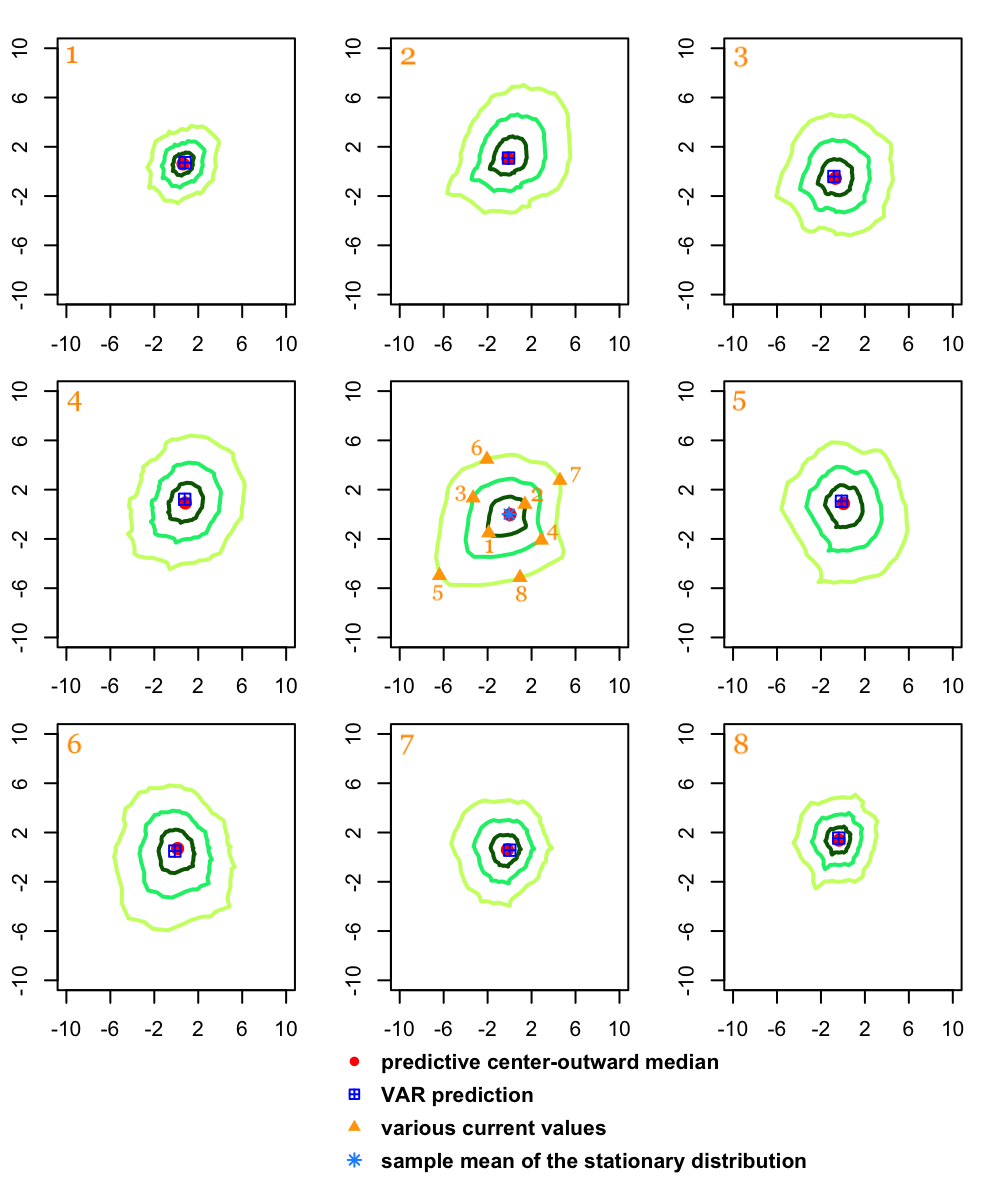}
    \caption{The estimated one-step-ahead conditional quantile contours and medians at selected current values for the (O1, O2) EEG signals in AD patients.  The central panel shows the center-outward quantiles of orders $\tau=0.2$ (dark green), 0.4 (green), 0.8 (light olive), the center-outward median (red), and the sample mean (light blue) of the unconditional empirical  distribution, and the current values (orange) at which quantile prediction is implemented in the surrounding panels. Each surrounding panel shows the one-step predictive center-outward quantile contours of orders $\tau=0.2$ (dark green), 0.4 (green), 0.8 (light olive), the conditional center-outward median (red), and the conventional VAR(1) one-step-ahead mean prediction (blue) at a particular current value.\vspace{-3mm}  } 
    \label{EEG_predictive_O1O2_FTD}
\end{figure}

Figure \ref{LR} compares the interhemispheric coherence or synchronization in AD patients and the healthy CN group ones under open-eye status. In this case, we take the first Principal Component (PC) of the EEG signals recorded by the electrodes on the left hemisphere (Fp1, F3, F7, C3, T3, T5, P3, O1), and the first PC of those on the right hemisphere (Fp2, F4, F8, C4, T4, T6, P4, O2) as the sample time series; these PCs summarize  the activities of the left and right cortexes. 
As shown in Figure \ref{LR}, the left and right EEG signals have reduced synchronization (more circular   quantile contour shapes) and less response to photic stimulations   (less volatile   trajectories) in the group of AD patients. 

We can also predict future trajectories based on the observed past. For illustration purposes, we show below   the one-step-ahead predictive quantiles for the EEG signals from (F1, F2) electrodes and (O1, O2) electrodes. 
%\textcolor{red}{WHAT DOES THAT NEXT SENTENCE MEAN?} \textcolor{cyan}{Future trajectories can be predicted step by step iteratively. 
A comparison between Figures \ref{EEG_predictive_F1F2_CN} and  \ref{EEG_predictive_F1F2_FTD} indicates that healthy brains exhibit more diverse/versatile and less predictable next-step distributions (conditional on  current values). Similar conclusions follow from comparing Figures \ref{EEG_predictive_O1O2_CN} and~\ref{EEG_predictive_O1O2_FTD}.

Summing up, our methods allow us to detect different patterns in the evolving trajectories of the conditional quantile contours of the EEG signals from several electrodes (corresponding to different cortex regions) in the groups of AD or FTD patients and the group of healthy subjects. Contrary to traditional univariate quantile autoregression models, our method is able to handle multi-dimensional   time series and detect alterations in the conditional joint distributions. Compared to the traditional vector autoregression model, which focuses on mean regression, our method is capable of depicting the entire conditional distribution, hence providing much richer information.

\appendix

\section{Appendix: Proofs}
\subsection{Measurability and the control of probability contents}
\subsubsection{Proof of \cref{lemma:set-valued-prediction-measure}}
The proof of \cref{lemma:set-valued-prediction-measure} requires a few preparatory steps. \medskip

\noindent {\it Preparatory Step 1: Fell and graphical topologies.} Let $\mathcal{V}$ be an open subset of $\R^d$ and consider~a sequence~$\{B_t\}_{t\in \N}$ of subsets of $\mathcal{V}$. Define the {\it inner} and {\it outer limits} of   $\{B_t\}_{t\in \N}$   {\it relative} to~$\mathcal{V}$   as
\begin{align*}
{\rm Liminn}_{t\to\infty}^{\mathcal{V}}B_{t}\coloneqq \{ u\in \mathcal{V}: \ \text{exists $\{x_t\}_{t\in\NN}$ with $x_t\in B_{t}$ such that $x_t\to u$ as $t\to\infty$}\}
\end{align*}
and
\begin{align*}
{\rm Limout}_{t\to\infty}^{\mathcal{V}}B_{t}\coloneqq \{ u\in \mathcal{V}: \  \text{exists $\{x_{n_k}\}_{k\in\N}$ with $x_{n_k}\in B_{n_k}$ such that $x_{n_k}\to u$ as $k\to\infty$}\},
\end{align*}
respectively. 
If $B=\operatorname{Liminn}_{t\to\infty}^{\mathcal{V}}B_t=\operatorname{Limout}_{t\to\infty}^{\mathcal{V}}B_{t}$, we say that $B$ is the {\it Kuratowski-Painlevé} limit of $\{B_t\}_{t\in \N}$ relative to $\mathcal{V}$ and write $ B=\operatorname{Lim}_{t\to\infty}^{\mathcal{V}}B_t$  or $B_t\xrightarrow{\mathcal{V}} B$. 

Denote  by  $CL_{\neq \empty}(\mathcal{V})$ the set of closed non-empty sets of $\mathcal{V}$. 
For a set $B\in 2^{\mathcal{V}}$, let  
$$ B^+\coloneqq \{ C\in 2^{\mathcal{V}}:\ C\subset B\}\quad {\rm and}\quad B^-\coloneqq \{ C\in 2^{\mathcal{V}}:\ C\cap B\neq \emptyset\}.$$
The Fell topology $\tau_{F}$ on $CL_{\neq \empty}(\mathcal{V})$ has as a subbase all sets of the form $B^-$, where $B$ is a nonempty open subset of $\mathcal{V}$, plus all sets of the form $W^+$, where $W\in \tau_{\mathcal{V}}\setminus \{ \emptyset\} $ has compact complement (see Definition 5.1.1 in \cite{Beer1993TopologiesOC}). 

Now consider the case of $\mathcal{V}$ being an open subset of $\R^{d}\times \R^d$. The topological space~$(\mathcal{V},\tau_{\mathcal{V}})$ then  is locally compact and second countable, so that (Ibid., Theorem  5.1.5) $(CL_{\neq \empty}(\mathcal{V}), \tau_{F})$ is a Polish space. We use the notation $B_t\xrightarrow{\tau_{F}} B$ for a sequence $\{ B_t\}_{t\in \N}\subset CL_{\neq \empty}(\mathcal{V})$ converging,  as $t\to\infty$,  to $B$ with respect to the topology $\tau_{F}$. The Kuratowski-Painlevé convergence and the Fell topology $\tau_{F}$ are related via this sequential characterization of the topology: indeed,~$B_t\xrightarrow{\tau_{F}} B$ if and only if $B=\operatorname{Lim}_{t\to\infty}^{\mathcal{V}}B_t$ (Ibid., Theorem 5.2.10). 

A maximal monotone operator $M:\R^d\to 2^{\R^d}$ is a convex-closed-valued mapping \cite[Exercise 12.8]{rockafellar2009variational}. That is, $M (u)$ is closed and convex for all $u\in \R^d$. Moreover, the graph ${\rm graph}(M)\coloneqq\{( u , v ):\  v \in M(u) \}$ of $ M$ is closed \cite[Theorem 24.4]{rockafellar1970}.  Therefore, if $M( u )\neq \emptyset$ for some $ u$ in some open subset $U$ of $\R^d$,  ${\rm graph}(M)\in CL_{\neq \empty}({U}\times \R^d)$  
 %Recall that the notation   $\Y_1\overset{\mathcal{L}}{\sim} \Y_2$ indicates that both follows the same distribution, i.e., $\mathcal{L}(\Y_1)=\mathcal{L}(\Y_2)$. 

It is well known (see e.g.~\cite[Theorem 1.12.4]{Vart_Well}) that the space of probability measures
$\mathcal{P}(\R^d)$ endowed with the weak topology (i.e., $\nu_n\xrightarrow{w}\nu$ if $\int f {\rm d}\nu_n\to \int f {\rm d}\nu $ for all bounded continuous function $f:\R^d\to \R$) is complete, separable, and metrizable by  the bounded Lipschitz metric 
$$d_{\rm BL}(\nu_1, \nu_2)=\sup_{f\in {\rm BL}(\R^d)}\Big\vert \int f {\rm d}\nu_1 -\int f {\rm d}\nu_2 \Big\vert, $$
where $${{\rm BL}}(\R^d)\coloneqq \{f:\R^d\to \R: \ \vert f( x )-f( y )\vert \leq \| x - y  \| \ {\rm and}\ \vert f( x )\vert \leq 1, \ \forall  x , y \in \R^d\}.$$ 

\noindent {\it Preparatory Step 2: Continuity and definition of $\Gamma$.} 
Since $ \mu_d \ll \ell_d $,  for any $\nu\in \mathcal{P}(\R^d),$ McCann's theorem (see \cite{McCann}) guarantees the existence of a unique probability distribu\-tion~$\gamma_{\nu}\in \mathcal{P}(\R^d\times \R^d)$ with cyclically monotone support such that 
$\gamma_{\nu}(\R^d\times B)=\nu(B) $ and~$\gamma_{\nu}(B\times \R^d)={ \mu_d (B)} $ for all $B\in \mathcal{B}^d$. A well-known result of Rockafellar (see \cite{RockafellarMaximalMonot}) establishes the existence of a convex function $\varphi_{\nu}$ from $\R^d$ to $\R$  such that~${\rm supp}(\gamma_{\nu})\subset{\rm graph}(\partial \varphi_\nu) $. 
Define the mapping 
\begin{equation}\label{Gammadef}\Gamma:(\mathcal{P}(\R^d), d_{\rm BL}) \ni {\nu} \mapsto    (\mathbb{B}^d\times \R^d)\cap {\rm graph}(\partial \varphi_\nu) \in ({\rm CL}_{\neq \emptyset}(\mathbb{B}^d\times \R^d), \tau_{F}).
\end{equation}

It follows from  \cite[Lemma 4.2]{Segers2022GraphicalAU} that $\Gamma$ is well defined---i.e., although several distinct versions of~$\varphi_\nu$   exist, the corresponding $\Gamma$'s agree in $\mathbb{B}^d$. 
The following result shows that $\Gamma$, moreover, is continuous.

\begin{lemma}\label{Lemma:continouity}
  The map $\Gamma$ defined in \eqref{Gammadef} continuous.
\end{lemma}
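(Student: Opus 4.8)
The plan is to prove sequential continuity: take $\nu_n \to \nu$ in $(\mathcal{P}(\R^d), d_{\rm BL})$, i.e.\ weak convergence, and show $\Gamma(\nu_n) \to \Gamma(\nu)$ in the Fell topology $\tau_F$ on ${\rm CL}_{\neq\emptyset}(\mathbb{B}^d \times \R^d)$. By the sequential characterization of $\tau_F$ recorded in Preparatory Step 1 (Beer, Theorem 5.2.10), this is equivalent to showing that $\Gamma(\nu)$ is the Kuratowski--Painlev\'e limit of $\{\Gamma(\nu_n)\}$ relative to $\mathcal{V} = \mathbb{B}^d \times \R^d$, that is, ${\rm Limout}^{\mathcal{V}}_{n\to\infty}\Gamma(\nu_n) \subseteq \Gamma(\nu) \subseteq {\rm Liminn}^{\mathcal{V}}_{n\to\infty}\Gamma(\nu_n)$. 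I would establish the two inclusions separately.

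For the outer-limit inclusion, suppose $(u_{n_k}, x_{n_k}) \in \Gamma(\nu_{n_k})$ with $(u_{n_k}, x_{n_k}) \to (u,x) \in \mathbb{B}^d \times \R^d$. Each $(u_{n_k}, x_{n_k})$ lies in ${\rm supp}(\gamma_{\nu_{n_k}})$ up to the graphical identification; the measures $\gamma_{\nu_{n_k}}$ have first marginal $\mu_d$ fixed and second marginal $\nu_{n_k}$ converging weakly, hence the sequence $\{\gamma_{\nu_{n_k}}\}$ is tight, and along a further subsequence $\gamma_{\nu_{n_k}} \to \gamma$ weakly for some coupling of $\mu_d$ and $\nu$ with cyclically monotone support (cyclical monotonicity of the support passes to weak limits by a standard closedness argument). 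By McCann uniqueness, $\gamma = \gamma_\nu$. Since $(u_{n_k}, x_{n_k})$ are in the supports of the converging measures and converge to $(u,x)$, a portmanteau/support-closedness argument gives $(u,x) \in {\rm supp}(\gamma_\nu) \subseteq {\rm graph}(\partial\varphi_\nu)$, and since $u \in \mathbb{B}^d$ we conclude $(u,x) \in \Gamma(\nu)$. For the inner-limit inclusion, fix $(u,x) \in \Gamma(\nu)$, so $x \in \partial\varphi_\nu(u)$ with $u \in \mathbb{B}^d$; I must produce $(u_n, x_n) \in \Gamma(\nu_n)$ with $(u_n,x_n) \to (u,x)$. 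Here I would use that ${\rm supp}(\mu_d) = \mathbb{B}^d$ so $u$ is approximated by points in the support, combined with the fact that on the open ball the subdifferential of $\varphi_{\nu_n}$ is single-valued $\mu_d$-a.e.\ and these gradient maps converge (this is exactly the content of the convergence results for center-outward maps, cf.\ \cite{Segers2022GraphicalAU,del2023central}); picking $u_n$ in a suitable dense set and letting $x_n$ be the corresponding image, continuity/monotonicity of maximal monotone operators lets me pass to the limit. Alternatively one can argue by contradiction: if no such sequence existed, a neighbourhood of $(u,x)$ would be eventually disjoint from ${\rm graph}(\partial\varphi_{\nu_n})$, contradicting the weak convergence $\gamma_{\nu_n} \to \gamma_\nu$ together with the fact that $\gamma_\nu$ charges every neighbourhood of a support point.

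The main obstacle is the inner-limit inclusion. The outer inclusion is a soft tightness-plus-uniqueness argument, but the inner inclusion genuinely requires that the optimal maps, not merely the optimal plans, converge in a pointwise sense near the given point $u$ — and it must be handled carefully at points where $\partial\varphi_\nu$ is multivalued and near, though excluding, the singularity of $\mu_d$ at the origin. I expect the cleanest route is to invoke \cite[Lemma 4.2]{Segers2022GraphicalAU} (already cited for well-definedness of $\Gamma$) together with the Kuratowski--Painlev\'e stability of graphs of maximal monotone operators under weak convergence of the associated plans, so that the graphical convergence $\Gamma(\nu_n) \xrightarrow{\mathcal{V}} \Gamma(\nu)$ follows directly; the remaining work is just to check that the compact-complement ($W^+$) part of the Fell subbase is respected, which follows because all graphs live in $\mathbb{B}^d \times \R^d$ with the first coordinate bounded and the relevant limits stay in $\mathbb{B}^d$.
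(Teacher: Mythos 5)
Your strategy is fundamentally the right one — reduce continuity to sequential continuity (both spaces being separable and metrizable) and then show that the closed sets $\Gamma(\nu_n)$ converge to $\Gamma(\nu)$ in the sense of Kuratowski--Painlev\'e, which by Beer's Theorem~5.2.10 is equivalent to Fell convergence. This is also the skeleton of the paper's argument. Where you diverge is in how the graph convergence is established. The paper does not decompose into outer/inner inclusions at all: it invokes Theorem~1.1 of \cite{Segers2022GraphicalAU}, which directly states that $\nu_t \xrightarrow{w} \nu$ implies ${\rm graph}(\partial\varphi_{\nu_t}) \to {\rm graph}(\partial\varphi_\nu)$ in the relevant graphical topology, and the proof is done in one line. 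You cite the same paper, but only its Lemma~4.2, which is the result the authors use earlier for well-definedness of $\Gamma$ (agreement of the different versions of $\varphi_\nu$ on $\mathbb{B}^d$), not for continuity. You do not identify the theorem that actually closes the argument.

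Because you do not land that citation, you are forced to try to reprove the graph convergence yourself, and this is where the genuine gap lies. Your ${\rm Limout}$ inclusion is essentially sound: tightness of the couplings $\gamma_{\nu_{n_k}}$ (marginals are $\mu_d$ and a weakly convergent sequence), closedness of cyclical monotonicity under weak limits of plans, McCann uniqueness, and upper semicontinuity of supports give $(u,x)\in{\rm supp}(\gamma_\nu)\subset{\rm graph}(\partial\varphi_\nu)$. But the ${\rm Liminn}$ inclusion — that every point of ${\rm graph}(\partial\varphi_\nu)\cap(\mathbb{B}^d\times\R^d)$ is a limit of points of ${\rm graph}(\partial\varphi_{\nu_n})$ — is not proved; you offer several sketches (a.e.\ convergence of gradients plus density of $\mu_d$'s support, or a contradiction via weak convergence of the plans) without carrying any of them through, and you correctly flag this as the hard step. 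It is also the entire mathematical content of the Segers et al.\ theorem, so your attempt is essentially re-deriving the ingredient the paper imports. Two further, smaller issues: the reference \texttt{del2023central} you mention is not in the paper's bibliography; and your closing remark about separately verifying the compact-complement $W^+$ part of the Fell subbase is unnecessary, since Beer's equivalence of Kuratowski--Painlev\'e convergence and Fell convergence in a locally compact second-countable space already handles both parts of the subbase at once.
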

\begin{proof}
%The map $\Gamma$ is well defined due to \cite[Lemma 4.2]{Segers2022GraphicalAU}. To prove the continuity we notice that,
    Since both  $(\mathcal{P}(\R^d), d_{\rm BL}) $ and $({\rm CL}_{\neq \emptyset}(\mathbb{B}^d\times \R^d), \tau_{F})$ are separable  metric spaces, continuity of $\Gamma$ is equivalent to sequential continuity. Therefore, let $ \{\nu_t\}_{t\in \N}\subset \mathcal{P}(\R^d)$ be a sequence such that $ \nu_t\xrightarrow{w}{\nu}\in \mathcal{P}(\R^d)$ as $t\to\infty$.   Theorem 1.1 in \cite{Segers2022GraphicalAU} implies that~$ {\rm graph}( \partial \varphi_{\nu_t})\xrightarrow{\tau_A} {\rm graph}(\partial \varphi_{\nu})$, which completes the proof. 
\end{proof}

\noindent {\it Preparatory Step 3: Measurability of the distance function.} 
Let $C\subset\R^d$ be a closed set. The distance between $C$ and $ x\in\R^d $ is defined as
 $ d(x,C)\coloneqq \inf_{c\in C}\| c-x\|$. Defining  %  the distance function 
$$ g_x: \R^d\times \mathcal{P}(\R^d)  \ni( u ,{\nu})\mapsto g_x(u, \nu)\coloneqq d( x ,\partial \varphi_\nu( u ))\in \R, $$
let us show  that $g_x$ is lower semicontinuous, that is,
\begin{equation}
        \label{lsc}
 \liminf_{(u_n,\nu_n)\to (u,\nu)} g_x (u_n,\nu_n)=\lim_{T\to \infty}\inf_{\substack{\| u-u'\|\leq 1/T\\ d_{{\rm BL}}(\nu',\nu)\leq 1/T}} d(x, \partial \varphi_{\nu'}(u')) \geq g_{x}(u,\nu).
    \end{equation}
To how this, suppose that,  for some $\epsilon>0$, 
$$\liminf_{( u_n,\nu_n)\to ( u ,\nu)} g_x ( u_ n,\nu_n)\leq d( x , \partial \varphi_{\nu}( u ))-\epsilon .$$ 
Then, there exists a sequence $ \{ ( x_ n, u_ n,\nu_n)\}_{n\in \N}\subset \R^d\times \mathbb{B}^d\times \mathcal{P}(\R^d)$ and $n_0=n_0(\epsilon)\in \N$ such that~$ u_ n \to   u  $,  $\nu_n\xrightarrow{w} \nu$, and $ x_n\in \partial \varphi_{\nu_n}( u_ n)$ with 
 \begin{equation}\label{previous display} \|  x_n- x \|=d( x ,\partial \varphi_{\nu_n}( u_ n))\leq d( x , \partial \varphi_{\nu}( u ))-\epsilon/2 \quad \text{for all $T\geq n_0$.} \end{equation}
 The sequence $ \{  x_n\}_{n\in \N}$ is bounded, so that it has a limit point $ x ^*$. It follows from \eqref{previous display} that 
 $$ \|  x ^*- x \|\leq d( x , \partial \varphi_{\nu}( u ))-\epsilon/2\leq \|  v - x \| -\epsilon/2 \quad \text{for all $ v \in \partial \varphi_{\nu}( u )$}.$$
 However, from \cref{Lemma:continouity}, $  x ^*\in \partial \varphi_{\nu}( u )$, yielding the contradiction $\|  x ^*- x \|<\|  x ^*- x \|$.

Therefore, $g_{ x }$ is lower semicontinuous, so that, due to  \cite[Theorem~3.87]{InfiniteDimensionalAnalysis_2006}, it is the pointwise limit of a sequence of continuous functions. As a consequence of Corollary~4.30 in \cite{InfiniteDimensionalAnalysis_2006}, $g_{ x }$ thus is $ (\mathcal{B}^d\otimes \mathcal{B}(\mathcal{P}(\R^d)))/ \mathcal{B}^d $-measurable. \medskip 

\noindent {\it Preparatory Step 4:  Conclusion.} Set $ x \in \R^d$, $A\in {\mathcal B}(\R^d)$, and define the map 
$$%\begin{align*}
    \xi_{ x }: A\times \Omega \ni  
( u ,\omega) \mapsto \xi_{ x }( u ,\omega)\coloneqq g_{ x }( u , \mathbb{P}_{\X\vert \mathcal{G}}(\cdot, \omega))\in\R.
$$%\end{align*} 
Being the composition of the $ (\mathcal{B}^d\otimes \mathcal{B}(\mathcal{P}(\R^d)))/ \mathcal{B}^d $-measurable function $ g_{ x } $ with\linebreak  the $(\mathcal{B}^d\otimes \mathcal{G})/(\mathcal{B}^d\otimes \mathcal{B}(\mathcal{P}(\R^d)))$-measurable  function $( u ,\omega)\mapsto ( u , \mathbb{P}_{\X\vert \mathcal{G}}(\cdot, \omega))\in A\times \mathcal{P}(\R^d) $, %. By definition of $\mathbb{P}_{\X\vert \mathcal{G}}$,
 $\xi_x$ is~$({\mathcal{B}^d}\otimes{\mathcal A})/{\mathcal{B}}$-measurable.\medskip

We now turn to the proof of \cref{lemma:set-valued-prediction-measure}. \medskip

\noindent{\it Proof of \cref{lemma:set-valued-prediction-measure}(i)\! (Measurability of the conditional quantile function).} 
Denoting by~$(\Omega', \mathcal{A}')$  a measurable space and by $S:\Omega'\to 2^{\R^d}$  a closed-valued map, recall that a set-valued map~$S$ is measurable if and only if the function $\varpi\mapsto d( x , S(\varpi))$ is measurable for all $ x \in \R^d$ (see Theorem~14 in \cite{rockafellar2009variational}).   
The conclusion of {\it Preparatory Step~4} is that~$( u ,\omega)\mapsto \xi_{ x }( u ,\omega)\coloneqq d( x , \mathbb{Q}_{\X\vert \mathcal{G} }( u ,\omega))$ is~$(\mathcal{B}^d\otimes \mathcal{G})/\mathcal{B}^d$-measurable. The measurability of the quantile function $\omega\mapsto\mathbb{Q}_{\X\vert \mathcal{G} }( u ,\omega)$ follows.\medskip\hfill$\square$

\noindent{\it Proof of  \cref{lemma:set-valued-prediction-measure} (ii) (Measurability of the conditional distribution function).}    
%The {\it Preparatory Steps} 1 and 2 above hold for this case. 
The %rest of the
 proof follows as for  \cref{lemma:set-valued-prediction-measure}(i) by replacing  $\mathbb{Q}_{\X\vert \mathcal{G} }$ with~$\mathbb{F}_{\X\vert \mathcal{G} }$ in each %remaining
  step. Note that only ${\rm graph}(\mathbb{Q}_{\X\vert \mathcal{G} })$   appears in Lemma~\ref{Lemma:continouity}, so that the result still holds when replacing $(\mathbb{B}^d\times \R^d)\cap {\rm graph}(\mathbb{Q}_{\X\vert \mathcal{G} })$ by $(\R^d\times \mathbb{B}^d)\cap {\rm graph}(\mathbb{F}_{\X\vert \mathcal{G} })$ in the definition of $\Gamma$.  \hfill$\square$
  %  \end{proof}
\subsubsection{Proof of \cref{lemma:proba-control}}
%\begin{proof}%[Proof of \cref{lemma:proba-control}]
    Since the mapping 
$\Omega\ni \omega \mapsto (X_{t+1}(\omega), \omega) \in \R^d\times \Omega$ is $\mathcal{A}/(\mathcal{B}^d\otimes \mathcal{A})$-measurable,  \cref{lemma:set-valued-prediction-measure} implies that the set-valued mapping  
 $  \Omega\ni \omega\mapsto \mathbb{F}_{t+1|t}( X_{t+1}(\omega), \omega) $ 
is $\mathcal{A}$-measurable. The first claim \eqref{Lemma2.6(1)} follows.  The second claim \eqref{Lemma2.6(2)} is a consequence of the fact that 
\begin{align*}
    \mathbb{P}\left( X_{t+1}\in  \mathcal{R}_{t+1|t}(\tau|\cdot) \bigg \vert \mathcal{F}_{\leq t}\right)(\omega) = \mu_d(\{u: {\bf Q}_{t+1|t}(u|\omega) \in  \mathbb{ Q}_{t+1|t}(\tau\mathbb{B}^d|\omega) \})
\end{align*}
with %and the contention 
$ \tau\,\mathbb{B}^d \subset \{u: {\bf Q}_{t+1|t}(u|\omega) \in  \mathbb{ Q}_{t+1|t}(\tau\mathbb{B}^d|\omega) \}.  $ 
Finally, \eqref{Lemma2.6(3)} follows from the fact that under the additional assumption \eqref{moreover}, ${\bf Q}_{t+1|t}(\cdot|\omega)$ is a.e.~invertible.
\medskip\hfill$\square$%\end{proof}

%\textcolor{red}{STOPPED HERE}

\subsection{Monotonicity and consistency of the estimated quantile map }
\subsubsection{Proof of \cref{lemma:monotone}}
%\begin{proof}[Proof of \cref{lemma:monotone}]
    Since $\hat{\pi}$ has monotone support, we get 
\begin{align*}
\langle \widehat{\mathbb{Q}}_T(u_s)- \widehat{\mathbb{Q}}_T(u_r), u_s-u_r \rangle &= k\left\langle \sum_{j=2}^T   \hat{\pi}_{s,j}  X_{j} -\sum_{j=2}^T    \hat{\pi}_{r,i}  X_{i}, u_s-u_r \right\rangle\\
    &=  k^2\sum_{i,j=2}^T   \hat{\pi}_{s,j}   \hat{\pi}_{r,i}  \left\langle   X_{j} - X_{i}, u_s-u_r \right\rangle\\
    &=  k^2\sum_{(i,j):\hat{\pi}_{s,j}, \hat{\pi}_{r,i}>0}   \hat{\pi}_{s,j}   \hat{\pi}_{r,i}  \left\langle   X_{j} - X_{i}, u_s-u_r \right\rangle \geq 0, 
\end{align*}
so that $u\mapsto \widehat{\mathbb{Q}}_T(u| x)$ is   monotone.\hfill$\square$
%\end{proof}
\subsubsection{Proof of \cref{lemma:consistency}}

%\begin{proof}[Proof of \cref{lemma:consistency}] 
Recall that $\rm P_1$, with density $p_1$, stands for the distribution of $X_1$ and set $x\in {\rm supp}(\rm P_1) $.  Let~$f:\R^d\to \R$ be  bounded and continuous, and define 
$$ K_h\left(\frac{x-y}{h} \right) \coloneqq \frac{K\left(\frac{x-y}{h} \right)}{\int K\left(\frac{x-y}{h} \right) {\rm d}y}= \frac{K\left(\frac{x-y}{h} \right)}{ h^d\int K\left(v\right) {\rm d}v}=\frac{K\left(\frac{x-y}{h} \right)}{ h^d},$$ 
where 
 $$ \hat{r}_{f}(x) \coloneqq \frac{1}{T-1} \sum_{t=1}^{T-1} 
f(X_{t+1})K_h\left(\frac{x-X_t}{h} \right).$$
%The claim follows after showing
Let us show  that 
$ \mathcal{E}=\left\vert  \hat{r}_{f}(x) -  r_{f}(x)  \right\vert  \xrightarrow{\PP} 0, $
where 
$$ r_{f}(x) \coloneqq p_1(x) \E[f(X_{t+1})\vert X_t=x].$$

As usual in this context, we split $\mathcal{E}$ into bias and variance components. 

\begin{enumerate}
\item[(a)]{\it (Bias term)} It follows from stationarity that
\begin{align*}
  \E\left[\hat{r}_{f}(x)\right]&=   \frac{1}{T-1}\sum_{t=1}^{T-1} \E\left[  f(X_{t+1}) K_h\left(\frac{x-X_t}{h} \right)\right] %\\
%   &
   =\int  {r}_{f}(x_1) K_h\left(\frac{x-x_1}{h} \right)  {\rm d} x_1.
\end{align*}
%where $p_{1,2}(x_1,x_2)$ denotes the density of $(X_1,X_2)$  and  $$p(x_2|x_1)=\frac{p_{1,2}(x_1,x_2)}{p_1(x_1)}$$
%denotes the conditional density of $X_2|X_1$. Call 
%$$g(x_1)= p_1(x_1) \int  f(x_{2}) p(x_2\vert x_1) dx_2 . $$

Fix $\epsilon>0$. Since ${r}_{f}$  is continuous on  ${\rm supp}(P_1)$ and vanishes at infinity,  there exists a compactly supported continuous function $g_\epsilon $  such that 
\begin{equation}
    \label{eps/3-1}
    \|g_\epsilon-{r}_{f}\|_\infty \leq \epsilon/3.
\end{equation}
Hence, by using the fact that $\int  K_h\left(\frac{x-x_1}{h}\right) {\rm d}x_1=1$, we get 
\begin{equation}
    \label{eps/3-2}
     \left| \E\left[\hat{r}_{f}(x)\right]- \int    g_\epsilon(x_1) K_h\left(\frac{x-x_1}{h} \right)  {\rm d} x_1\right|\leq \frac{\epsilon}{3}.
    \end{equation}
Let $w$ be the modulus of continuity of the uniformly continuous function $g_\epsilon$. Then, with the change of variables $v={(x-x_1)}/{h} $
\begin{align*}
     \left\vert \int    g_\epsilon(x_1) K_h\left(\frac{x-x_1}{h} \right)  {\rm d} x_1 -  g_\epsilon(x)\right\vert  &\leq  \int   K_h\left(\frac{x-x_1}{h}  \right) \omega(x-x_1)  {\rm d} x_1 \\
     &=\int   K\left(v  \right) \omega(vh)  {\rm d} v
\end{align*}
where the function $\omega(vh) $ is bounded and tends to zero as $ h\to 0$. By the dominated convergence theorem,  there exists $h_\epsilon>0$ such that 
\begin{align}\label{previousd}
     \left\vert \int    g_\epsilon(x_1) K_h\left(\frac{x-x_1}{h} \right)  {\rm d} x_1 -  g_\epsilon(x)\right\vert  \leq \frac{\epsilon}{3} \quad \text{for all } h< h_\epsilon.
\end{align}
Together,  \eqref{eps/3-1},  \eqref{eps/3-2}, and \eqref{previousd} imply that, for $h$ small enough,  
$$ |  \E\left[\hat{r}_{f}(x)\right]-  p_1(x) \E[f(X_{t+1})\vert X_t=x]| \leq \epsilon $$
so that
$$ \E\left[\hat{r}_{f}(x)\right] \to  p_1(x) \E[f(X_{t+1})\vert X_t=x]\quad\text{ as $T\to\infty$}.$$

\item[(b1)]{\it (Variance term---Geometric ergodicity)} Let us analyze each term of the sum
\begin{multline*}
    \E[ (\hat{r}_{f}(x)-\E[\hat{r}_{f}(x)])^2] \\= \frac{1}{(T-1)^2} \sum_{s,t=1}^{T-1} 
\underbrace{{\rm Cov}\left(f(X_{t+1})K_h\left(\frac{x-X_t}{h}\right), f(X_{s+1})K_h\left(\frac{x-X_s}{h}\right) \right)}_{c_{t,s,h}}
\end{multline*}
separately. On the one hand, for $s=t$ we have 
\begin{align*}
    |c_{t,t,h}| &\leq \|f\|_\infty^2 \int K_h^2\left(\frac{x-x_1}{h}\right) p_1(x_1) {\rm d}x_1%\\
 %   &
    \leq h^{-d} \underbrace{\|f\|_\infty^2 \|p_1\|_\infty \int K^2\left(v\right)  {\rm d}v}_{C_1}.
\end{align*}
On the other hand, for $s+1<t$, letting 
$$ S_t(x) \coloneqq f(X_{t+1})K_h\left(\frac{x-X_t}{h}\right)-\E\left[f(X_{t+1})K_h\left(\frac{x-X_t}{h}\right)\right],$$
the mixing assumption yields
\begin{align*}
     |c_{t,s,h}|&= \E[ S_s(x) \E[ S_t(x)| X_{t-1}, \dots, X_s] ] \\
     &\leq  \| S_s(x)\|_{L^2(\PP)}\|\E[ S_t(x)| X_{t-1}, \dots, X_s]  \|_{L^2(\PP)}\\
     &\leq  \delta^{t-s}\| S_s(x)\|_{L^2(\PP)} \| S_t(x)\|_{L^2(\PP)}\leq C_1  \delta^{t-s} h^{-d}.
\end{align*}
As a consequence, 
\begin{align*}
    \E[ (\hat{r}_{f}(x)-\E[\hat{r}_{f}(x)])^2]&\leq \frac{C_1}{T^2 h^d} \sum_{t,s=1}^T \delta^{|t-s-1|}\\
    &\leq \frac{2 C_1}{T^2 h^d} \sum_{j=0}^{T}  (T-j)\delta^{j-1}%\\
   % &
    \leq  \frac{2 C_1}{T h^d} \sum_{j=0}^T  \delta^{j-1}
    \leq \frac{C_2}{T h^d},
\end{align*}
which tends to zero as $T h^d\to \infty $. 

\item[(b2)]{\it (Variance term---mixing)} Let us show that $|c_{t,s,h}|$ decreases exponentially fast in $|t-s|$. Since  $S_t$ is $\mathcal{F}_{\leq t+1}$-measurable and upper bounded by $ Ch^{-d}$, where~$ C= \|K\|_\infty \| f\|_\infty$, 
we get,
for $s+1<t$, %the estimate
$$ \E[S_t S_{s} ] \leq \|S_t\|_\infty \|S_{s}\|_\infty  \alpha(|t-s-1|) \leq C^2 h^{-2d} \alpha(|t-s-1|).$$
\end{enumerate}
The convergence to zero of $\mathcal E$ follows, which completes the proof of \cref{lemma:consistency}.\hfill$\square$

\subsubsection{Proof of \cref{Markov:Estimation}}
%\end{proof}
%\begin{proof}[Proof of \cref{Markov:Estimation}]
    We know that,  for all $x\in {\rm supp}(P_1)$, 
    $$ \PP(\sup_{u\in \mathcal{K}}\| \widehat{\mathbf{Q}}_T ( u\vert x)- {\mathbf{Q}}_{2|1}( u\vert x) \|>\epsilon) \to 0 \quad\text{as $T\to\infty$.}$$
    Hence, 
    for any $R>0$, 
    \begin{align*}
        &\PP\left(\sup_{u\in \mathcal{K}}\| \widehat{\mathbf{Q}}_T ( u\vert X_{T})- {\mathbf{Q}}_{2|1}( u\vert X_{T}) \|>\epsilon\right)\\
        &\ \ \ \leq \underbrace{\PP\left(\left(\sup_{u\in \mathcal{K}}\| \widehat{\mathbf{Q}}_T ( u\vert X_T)- {\mathbf{Q}}_{2|1}( u\vert X_T) \|>\epsilon \right)\cap (X_T\in R\,\mathbb{B}^d) \cap (X_{T-1}\in R\,\mathbb{B}^d) \right) }_{A_T}\\
        &\qquad + 2\, \rm P_{1}(\R^d\setminus R\mathbb{B}^d ).
    \end{align*}
 As the second term can be made arbitrary small by increasing $R$, the result follows by showing that the first term tends to zero. Let  
 $$ \alpha_T(X_1, \dots, X_T) \coloneqq \mathbb{I}\left[\sup_{u\in \mathcal{K}}\| \widehat{\mathbf{Q}}_T ( u\vert X_T)- {\mathbf{Q}}_{2|1}( u\vert X_T) \|>\epsilon \right]. $$
By \cref{assumtpion-density-bounded}, ${\rm P}_{2|1}$  is bounded in $R\,\mathbb{B}^d \times R\,\mathbb{B}^d $  by a finite constant $\Lambda_{R}$, so that 
 \begin{align*}
     A_T&\coloneqq \int \cdots \int   \int_{ R\,\mathbb{B}^d } \int_{ R\,\mathbb{B}^d }  \alpha_T(x_1, \dots, x_T) {\rm P}_{2|1}(x_T|x_{T-1}) {\rm d}x_{T} {\rm P}_{2|1}(x_{T-1}|x_{T-2}) {\rm d}x_{T-1} \cdots  p_1(x_1) {\rm d}x_1 \\
     &\leq \Lambda_R \int \cdots \int   \int_{ R\,\mathbb{B}^d } \int_{ R\,\mathbb{B}^d }  \alpha_T(x_1, \dots, x_T) {\rm d}x_{T} {\rm P}_{2|1}(x_{T-1}|x_{T-2}) {\rm d}x_{T-1} \cdots  p_1(x_1) {\rm d}x_1 \\
     &= \Lambda_R   \int_{ R\,\mathbb{B}^d }  \int \cdots \int  \int_{ R\,\mathbb{B}^d }  \alpha_T(x_1, \dots, x_T)  {\rm P}_{2|1}(x_{T-1}|x_{T-2}) {\rm d}x_{T-1} \cdots  p_1(x_1) {\rm d}x_1 {\rm d}x_{T} 
 \end{align*}
 as $T\to \infty.$
From \cref{theorem-consistency}, for every $x\in {\rm supp}(P_1)$, it follows that 
$$ \int \cdots \int  \int_{ R\,\mathbb{B}^d }  \alpha_T(x_1, \dots, x_{T-1}, x)  {\rm P}_{2|1}(x_{T-1}|x_{T-2}) {\rm d}x_{T-1} \cdots  p_1(x_1) {\rm d}x_1  \to 0 .$$
The dominated convergence theorem concludes the proof.\hfill$\square$
%\end{proof}
\subsection{Convergence rates}
\subsubsection{Proof of \cref{lemma:wassersteinEstability}}
{Recall that $\mu_d^{(k)}$ is defined in \eqref{measure_cond_Reg} for $k=k(T)$. }
%\begin{proof}[Proof of \cref{lemma:wassersteinEstability}] 
To simplify the formulas, write  $\rm P_x$ for~${\rm P}_{2|1}(\cdot|x)$. 
   By the   definition of push-forward measures,  and using the fact that $ \hat{\pi}$, defined in \eqref{kanto_reg}, is a coupling, %the equality 
    $$ \int f {\rm d} (\widehat{\rm P}_x-\rm P_x) = \int f  {\rm d}\hat{\pi} -\int f\circ \mathbf{Q}_{2|1}(\cdot|x) {\rm d} \mu_d,$$
%    }
% holds
 for any continuous and bounded function $f$. Hence, setting $f\coloneqq \psi(\cdot| x)$ (recall that $x$ is fixed) where $\nabla_z \psi(z|x) = \mathbf{F}_{2|1}(z|x)$, we obtain  
 \begin{multline*}
     \int \psi(\cdot |x) {\rm d}(\widehat{\rm P}_x-\rm P_x) + \int \psi(\cdot |x)\circ \mathbf{Q}_{2|1}(\cdot|x) {\rm d}(\mu_d-\mu_d^{(k)})\\= \int \psi(v |x) {\rm d} {\hat \pi} (u,v)-\int \psi( \mathbf{Q}_{2|1}(\cdot|x)|x) {\rm d}{\mu}_d^{(k)}
 \end{multline*}
 
Under \cref{Assumption-On-density} the function $\psi(\cdot|x)$ is $\mathcal{C}^1$ in $\R^d$ and  $\mathcal{C}^2$ in ${\rm int}({\rm supp}(\rm P))$ except on the convex set ${\bf Q}_{1|2}(0|x)=\argmin \psi(\cdot|x)$, 
which has measure zero. The convex conjugate of~$\psi(\cdot|x)$ is the function $\varphi(\cdot|x)$, which is $\mathcal{C}^2$ in $\mathbb{B}^d\setminus \{0\}$. %In the following calculations 
Below, we write $\varphi$ and  $\psi$ instead of~$\varphi(\cdot|x)$  and $\psi(\cdot|x)$. Since $\psi$ is convex, applying Jensen's inequality in %the expression 
$$ \int \psi {\rm d}(\widehat{\rm P}_x-\rm P_x) + \int \psi\circ \mathbf{Q}_{2|1}(\cdot|x) {\rm d}(\mu_d-{\mu_d^{(k)})}= \int \psi(v) {\rm d}{\hat \pi}(u,v)-\int \psi\circ \mathbf{Q}_{2|1}(\cdot|x) {\rm d}{\mu_d^{(k)})}$$
 yields %to get the inequality 
$$ \int \psi {\rm d}(\widehat{\rm P}_x- \rm P_x) + \int \psi\circ \mathbf{Q}_{2|1}(\cdot|x) {\rm d}(\mu_d-{\mu_d^{(k)})})\geq  \int \psi \circ \widehat{\mathbf{Q}}_{T}(\cdot|x) {\rm d}{\mu_d^{(k)})}-\int \psi\circ \mathbf{Q}_{2|1}(\cdot|x) {\rm d}{\mu_d^{(k)})}. $$
The function $\psi$ is strongly convex on the compact convex set $\mathcal{K}' $, so that, for some $\lambda>0$, 
$$ \psi(z) \geq \psi (y)+\langle \nabla \psi(y)  ,z-y \rangle + \mathbb{I}_{x,y\in \mathcal{K}'}\lambda \|z-y\|^2  ,$$
 from which  we get the estimate 
 \begin{align}\label{secondterm}
 &\int \psi {\rm d}(\widehat{\rm P}_x-\rm P_x) + \int \psi\circ \mathbf{Q}_{2|1}(\cdot|x) {\rm d}(\mu_d-{\mu_d^{(k)})}\nonumber\\
       &\geq \int \langle\nabla \psi(\mathbf{Q}_{2|1}(u|x)), \widehat{\mathbf{Q}}_{T}(u|x)-\mathbf{Q}_{2|1}(u|x)\rangle  {\rm d}{\mu_d^{(k)})}(u)+  \lambda\int_{\mathcal{V}_n} \|\mathbf{Q}_{2|1}(u|x) -\widehat{\mathbf{Q}}_{T}(u|x)\|^2  {\rm d}{\mu_d^{(k)})}(u)\nonumber\\
       &= \int \langle  u, \widehat{\mathbf{Q}}_{T}(u|x)-\mathbf{Q}_{2|1}(u|x)\rangle   {\rm d}{\mu_d^{(k)})}(u)+  \lambda \int_{\mathcal{V}_n} \|\mathbf{Q}_{2|1}(u|x) -\widehat{\mathbf{Q}}_{T}(u|x)\|^2   {\mu_d^{(k)})}.
    \end{align}
    
On the one hand, the Fenchel equality implies
$$ \int \psi\circ \mathbf{Q}_{2|1}(\cdot|x) {\rm d}(\mu_d-{\mu_d^{(k)})})=- \int \varphi(\cdot|x) {\rm d}(\mu_d-{\mu_d^{(k)})})+ \int \langle  \mathbf{Q}_{2|1}(u|x) , u\rangle  {\rm d}(\mu_d-{\mu_d^{(k)})})(u). $$
On the other hand, recalling the definition of $\widehat{\mathbf{Q}}_{T}(u|x) = \int v {\rm d}{\hat \pi}(v|u)$, %we obtain 
$$  \int \langle  u, \widehat{\mathbf{Q}}_{T}(u|x)\rangle   {\rm d}{\mu_d^{(k)})}(u)=\int \langle  u, v\rangle   {\rm d}{\hat \pi}(u,v) .   $$
Finally, Kantorovich duality yields
$$  \int \langle  u, {\mathbf{Q}}_{2|1}(u|x)\rangle   {\rm d}\mu_{d}   (u)=\inf_{f} \int f {\rm d} \mu_d +  \int f^* {\rm d P}_x {\leq \int \widehat{\varphi}_T(\cdot|x) {\rm d} \mu_d + \int \widehat{\psi}_T(\cdot|x) {\rm d}{\rm P}_x   ,  } $$
{where 
$$(\widehat{\psi}_T(\cdot|x), \widehat{\varphi}_T(\cdot|x))\in   \argmin_{f(u)+ g(v)\geq \langle u,v\rangle } \int f {\rm d} \mu_d^{(k)} +  \int g  {\rm d}\widehat{\rm P}_x . $$}
%
%\textcolor{red}{STOPPED HERE}
%Using the previous estimates, we derive the following 
This entails a bound on the second term of the right-hand side of \eqref{secondterm}:
\begin{align*}
     \lambda \int_{A} \|\mathbf{Q}_{2|1}(u|x)& -\widehat{\mathbf{Q}}_{T}(u|x)\|^2   {\rm d}{\mu_d^{(k)})}(u)\\
    &\leq \int \psi {\rm d}(\widehat{\rm P}_x-\rm P_x) + \int \langle u, \mathbf{Q}_{2|1}(u|x) \rangle  {\rm d} \mu_d(u)- \int \langle  u, \widehat{\mathbf{Q}}_{T}(u|x)\rangle  {\rm d}{\mu_d^{(k)})}(u)
    \\
    &\leq \int \psi {\rm d}(\widehat{\rm P}_x-\rm P_x) - \int \widehat{\psi}_{T} {\rm d}(\widehat{\rm P}_x-\rm P_x)-\int \widehat{\varphi}_T(\cdot|x)-\varphi(\cdot|x) {\rm d}({\mu_d^{(k)})}-\mu_{d}) \\
    &= \int( \psi-\widehat{\psi}_T(\cdot|x)) {\rm d}(\widehat{\rm P}_x-\rm P_x) + \int (\varphi(\cdot|x)-\widehat{\varphi}_T(\cdot|x)) {\rm d}(\mu_{d}-{\mu_d^{(k)})}).\label{secondterm}
\end{align*}
Since  ${\rm P}_{x}$ and $\mu_d$ are compactly supported, the convex functions  $\psi$, $\varphi(\cdot|x))$,  $\hat{\psi}$, and $\hat{\varphi}(\cdot|x)$ are Lipschitz. The result follows.\hfill$\square$ 
%\end{proof}

\subsubsection{Proof of \cref{Theorem:rates}}
%\begin{proof}[Proof of \cref{Theorem:rates}]
Let  
$K_h(v)\coloneqq  {K\left({v}/{h} \right)}/{ h^d}$.
Due to \cref{lemma:wassersteinEstability}, we just need to bound $  d_{\rm BLC}(\widehat{\rm P}_x, {\rm P}_{2|1}(\cdot|x )). $
Splitting this bound  into tree different components yields 
\begin{align*} d_{\rm BLC}( \widehat{\rm P}_x, {\rm P}_{2|1}(\cdot|x ))&\leq  d_{\rm BLC}( \widehat{\rm P}_x, \widehat{\rm P}_x^{(2)})+ d_{\rm BLC}( \widehat{\rm P}_x^{(2)}, {\rm P}^h) +d_{\rm BLC}( {\rm P}^h,{\rm P}_{2|1}(\cdot|x ))\\ 
&\eqqcolon {B_1 + B_2 + B_3}, \text{ \rm say,}
\end{align*}
where 
$$ \int f {\rm d} \widehat{\rm P}_x^{(2)}= \frac{1}{np_1(x)}\sum_{t=1}^{T-1} K_h(x-X_{t+1}) f(X_t)  $$
and 
$$ \int f {\rm d} {\rm P}^h= \frac{1}{p_1(x)}\int K_h(x-v_{1}) f(v_2) {\rm dP}(v_1,v_2). $$
Let us bound each of these three components separately. 
\begin{enumerate}
\item[($B_1$)]%{\it Bound of $d_{\rm BLC}( \widehat{\rm P}_x, \widehat{\rm P}_x^{(2)})$.} 
Observe that, for any bounded   Lipschitz function 
$f$, %it follows that 
\begin{align*}
    \left|\int f {\rm d}(\widehat{\rm P}_x- \widehat{\rm P}_x^{(2)})\right| &\leq  \frac{\|f\|_\infty}{T-1}\sum_{t=1}^{T-1} K_h(x-X_{t+1})  \left| \frac{1}{\frac{1}{T-1}\sum_{t=1}^{T-1} K_h(x-X_{t+1})  }-\frac{1}{p_1(x)}\right|\\
    &=\frac{\|f\|_\infty}{p_1(x)}  \left| {p_1(x)}-{\frac{1}{T-1}\sum_{t=1}^{T-1} K_h(x-X_{t})}\right|.
\end{align*}
Decomposing into  bias and variance yields 
\begin{multline*}
    \Big| {p_1(x)}-{\frac{1}{T-1}\sum_{t=1}^{T-1} K_h(x-X_{t})}\Big| \\\leq  \underbrace{\left|{\frac{1}{T-1}\sum_{t=1}^{T-1} K_h(x-X_{t+1})}- \int K_h(x-v_1) p(v_1,v_2) {\rm d}v_1{\rm d}v_2 \right|}_{V_{{T}}}\\
    +\underbrace{\left|\int K_h(x-v_1) p(v_1,v_2) {\rm d}v_1{\rm d}v_2 -p_1(x)\right|}_{B_{{1T}}}.
\end{multline*}
For the bias term, the assumption that $p(v_1,v_2)$ is $\mathcal{C}^{1,1}$  and has a  Lipschitz derivative   with constant $L$ implies that   
\begin{multline*}
     B_{{1T}}
     \leq  \underbrace{\left|\int K_h(x-v_1)  \left\{p(x,v_2)+ \langle \nabla p(x,v_2), x-v_1\rangle \right\} {\rm d}v_1  {\rm d}v_2 -p_1(x)\right|}_{B_{{1T}}' } \\
+ \int_{\mathcal{X}}\int K_h(x-v_1) \|v_1-x\|^2 {\rm d}v_1{\rm d}v_2.
\end{multline*}
Since 
$ \int_{\mathcal{X}}\int K_h(x-v_1) \|v_1-x\|^2 {\rm d}v_1{\rm d}v_2 \leq {\rm diam}(\mathcal{X}) h^2 $
and 
\begin{align*}
    B_{{1T}}' 
    &= \Bigg|\int_{\mathcal{X}} \int K_h(x-v_1) {\rm d}v_1 p_{1,2}(x,v_2) {\rm d}v_2 \\
    &\qquad \qquad+\int_{\mathcal{X}} \int K_h(x-v_1) \langle \nabla p_{1,2}(x,v_2), x-v_1\rangle  {\rm d}v_1 {\rm d}v_2 -p_1(x)\Bigg| \\
    &=  \left|\int_{\mathcal{X}}\int K_h(x-v_1) \langle \nabla p(x,v_2), x-v_1\rangle  {\rm d}v_1 {\rm d}v_2\right| \\
    &= \left|\int K_h(z) \langle \nabla p(x,v_2), z\rangle   {\rm d}z {\rm d}v_2\right| %\\
    %&
    =  \left|\int_{\mathcal{X}} \left\langle \nabla p(x,v_2), \int    K_h(z) z  {\rm d}z \right\rangle {\rm d}v_2 \right| =0,
\end{align*}
(where the last equality follows from the assumption that $\int    K(z) z  {\rm d}z =0$). 

Turning to the variance term and  arguing as in the proof of \cref{lemma:consistency} yields 
$$ \E[V_{{T}^2}] \leq \frac{1}{(T-1)^2} \sum_{s,t=1}^{T-1} 
{\rm Cov}\left(K_h\left(\frac{x-X_t}{h}\right), K_h\left(\frac{x-X_s}{h}\right) \right) \lesssim \frac{1}{T h^d}.$$
We thus have% the bound 
\begin{equation}
    \label{First-Term-bound-rates}
   \E[ B_1] =  \E[ d_{\rm BLC}( \widehat{\rm P}_x, \widehat{\rm P}_x^{(2)})]  \lesssim  h^2 + \frac{1}{T h^d}. 
\end{equation}

\item[($B_2$)]%{\it Bound of $d_{\rm BLC}( \widehat{\rm P}_x^{(2)}, {\rm P}^h)$.}
 Chaining arguments are standard in this context. By \cite[Theorem~5]{Covering-convex}, the uniform-norm covering numbers $\mathcal{N}(\epsilon, {\rm BLC}(\mathcal{X}))$ of the class ${\rm BLC}(\mathcal{X})$ of bounded convex  Lipschitz functions over the compact set $\mathcal{X}$ are upper-bounded\linebreak  by
 $ \log(\mathcal{N}(\epsilon)) \lesssim \epsilon^{-{d}/{2}}.$ 
That is, for each $\epsilon>0$, there exists a finite sequence $f_1, \dots,f_{\mathcal{N}(\epsilon)} $ of bounded  convex Lipschitz functions such that 
 $ \inf_{s=1, \dots, {\mathcal{N}(\epsilon)} }\| f-f_s\|\leq \epsilon $ 
for \linebreak any~$f\in~\!{\rm BLC}(\mathcal{X})$. 
The same bound holds for the uniform-norm covering numbers~$\mathcal{N}(\epsilon, \mathcal{F}_\delta)$ of the class  
$ \mathcal{F}_\delta\coloneqq \{ f-g: \ f,g\in {\rm BLC}(\mathcal{X}) \ \|f-g\|_\infty\leq \delta \}$, $\delta>0 .$ 

 We establish  a bound on $B_2=d_{\rm BLC}( \widehat{\rm P}_x^{(2)}, {\rm P}^h)$ for $(T-1)/2\in \N$;  the general case follows along similar lines.  Fix $f\in {\rm BLC}(\mathcal{X})$ and note that, using the convexity of the exponential function, 
\begin{align*}
    &\E\left[\exp\left( \frac{\lambda}{T-1}\sum_{s=1}^{T-1}(f(X_s,X_{s+1})- \E[f(X_s,X_{s+1})])\right)\right]\\
    & =\E\Bigg[\exp\Bigg( \frac{\lambda}{(T-1)}\sum_{s=1}^{(T-1)/2}(f(X_{2s},X_{2s+1})- \E[f(X_{2s},X_{2s+1})])\\
    &\qquad \qquad \qquad+ \frac{\lambda}{(T-1)}\sum_{s=1}^{(T-1)/2}(f(X_{2s-1},X_{2s})-\E[f(X_{2s-1},X_{2s})])\Bigg)\Bigg] \\
    &\leq \frac{1}{2}\E\Bigg[\exp\Bigg( \frac{\lambda}{(T-1)/2}\sum_{s=1}^{(T-1)/2}(f(X_{2s},X_{2s+1})- \E[f(X_{2s},X_{2s+1})])\Bigg)\Bigg]\\
    &\qquad  + \frac{1}{2}\E\Bigg[\exp\Bigg(\frac{\lambda}{(T-1)/2}\sum_{s=1}^{(T-1)/2}(f(X_{2s-1},X_{2s})- \E[f(X_{2s-1},X_{2s})])\Bigg)\Bigg].
\end{align*}
Hoeffding's lemma for Markov sequences (see \cite[Theorem~1]{JianqingFan-etal.JMLR.2021}) and \cref{Assumption:MArkov-kenel} yield 
\begin{multline*}
    \E\Bigg[\exp\Bigg( \frac{\lambda}{(T-1)/2}\sum_{s=1}^{(T-1)/2}(f(X_{2s},X_{2s+1})- \E[f(X_{2s},X_{2s+1})])\Bigg)\Bigg]\\
    \leq \exp\left(\frac{2(1+\delta)\lambda^2\|f\|_\infty^2}{(1-\delta)T}   \right)
\end{multline*}
and 
\begin{multline*}
     \E\Bigg[\exp\Bigg(\frac{\lambda}{(T-1)/2}\sum_{s=1}^{(T-1)/2}(f(X_{2s-1},X_{2s})- \E[f(X_{2s-1},X_{2s}))]\Bigg)\Bigg]\\
     \leq \exp\left(\frac{2(1+\delta)\lambda^2\|f\|_\infty^2}{(1-\delta)T}   \right),
\end{multline*}
so that 
$$ \E\left[\exp\left( \frac{\lambda}{T-1}\sum_{s=1}^{T-1}(f(X_s,X_{s+1})- \E[f(X_s,X_{s+1})])\right)\right] \leq \exp\left(\frac{2(1+\delta)\lambda^2\|f\|_\infty^2}{(1-\delta)T}   \right). $$
As a consequence, for every $f$ with $\|f\|_\infty<\infty$,   $$ \E\left[\exp\left( \lambda \left(  \int f {\rm d} (\widehat{\rm P}_x^{(2)}- {\rm P}^h)\right)\right)\right] \leq \exp\left(\frac{2(1+\delta)\lambda^2 \|f\|_\infty^2}{(1-\delta) (p(x))^2T h^d}   \right). $$

 The random process $f\mapsto U_{T}(f)\coloneqq \int f {\rm d} (\widehat{\rm P}_x^{(2)}- {\rm P}^h)$ thus  is $\sigma_{T}^2$-sub-Gaussian with respect to the $\|\cdot\|_\infty$-norm, with  
$ \sigma_{T} \lesssim  {1}/({ T^{{1}/{2}} h^{{d}/{2}}}) $. Therefore,% the random pro\-cess
~${V}_{T}(f)\coloneqq T^{{1}/{2}} h^{ {d}/{2}} U_n(f)$ is $\sigma^2$-sub-Gaussian with respect to the $\|\cdot\|_\infty$-norm with  
$ \sigma <\infty$ irrespective of $T$. First  assume  that $d>4$. Dudley’s entropy bound {\cite[Theorem~5.22]{Wainwright_2019}} implies that, for every~$\gamma\in (0,1)$, 
\begin{align*}
    \E\left[ \sup_{f\in {\rm BLC}(\mathcal{X})} U_{T}(f) \right]& = \frac{\E\left[ \sup_{f\in {\rm BLC}(\mathcal{X})} V_{T}(f) \right]}{T^{{1}/{2}} h^{{d}/{2}}}\\
    &\lesssim \frac{\E\left[ \sup_{f\in \mathcal{F}_\gamma} V_{T}(f) \right] + \int_{\gamma}^{1} \epsilon^{-{d}/{4}} d\epsilon }{T^{{1}/{2}} h^{{d}/{2}}}\\
    &\lesssim \E\left[ \sup_{f\in \mathcal{F}_\gamma} U_{T}(f) \right] +\frac{ \gamma^{1-{d}/{4}}-1 }{T^{{1}/{2}} h^{{d}/{2}}}%\\
    %&
    \lesssim \gamma +\frac{ \gamma^{1- {d}/{4}}-1 }{T^{{1}/{2}} h^{{d}/{2}}}.
\end{align*}
For $d>4$ and $\gamma={T^{-\frac{2}{d}} h^{-2}}$, we obtain 
\begin{equation}
    \label{Second-Term-bound-rates}
    \E\left[ d_{\rm BLC}( \widehat{\rm P}_x^{(2)}, {\rm P}^h) \right]=\E\left[ \sup_{f\in {\rm BLC}(\mathcal{X})} U_{T}(f) \right] \lesssim \frac{1}{T^{{2}/{d}} h^2}.
    \end{equation}
For $d=4$ and $\gamma\in (0,1)$, repeating the same argument yields 
\begin{align*}
    \E\left[ \sup_{f\in {\rm BLC}(\mathcal{X})} U_{T}(f) \right]
    &\lesssim \gamma -\frac{ \log(\gamma) }{T^{{1}/{2}} h^{2}}
\end{align*}
hence, for  $\gamma={T^{- {1}/{2}} h^{-2}}$, %we get the rate 
\begin{align*}
    \E\left[ d_{\rm BLC}( \widehat{\rm P}_x^{(2)}, {\rm P}^h)\right]
    &\lesssim \frac{ \log({T^{{1}/{2}} h^{2}}) }{T^{{1}/{2}} h^{2}}.
\end{align*}
Finally, for $d<4$, the entropy integral converges and we get the rate 
$$ \E\left[ d_{\rm BLC}( \widehat{\rm P}_x^{(2)}, {\rm P}^h)\right]
    \lesssim \frac{ 1 }{T^{{1}/{2}} h^{{d}/{2}}}. $$

\item[($B_3$)] %{\it Bound of ${\rm BL}(  {\rm P}^h,{\rm P}_{2|1}(\cdot|x ))$ and conclusion.}  
By the same argument as for %the bias term
 $B_1$ in~\eqref{First-Term-bound-rates},  
\begin{equation}\label{third} {\rm BL}(  {\rm P}^h,{\rm P}_{2|1}(\cdot|x )) \lesssim h^2.
\end{equation} 
As a consequence of \eqref{First-Term-bound-rates}, \eqref{Second-Term-bound-rates}, and \eqref{third}, we obtain 
$$ \E[d_{\rm BLC}( \widehat{\rm P}_x, {\rm P}_{2|1}(\cdot|x ))]\lesssim \begin{cases}
\frac{ 1 }{T^{{1}/{2}} h^{{d}/{2}}}+h^2&{\rm if} \ d<4,\vspace{2mm}\\
\frac{ \log({T h^{4}}) }{T^{{1}/{2}} h^{2}}+h^2  &{\rm if} \ d=4, \vspace{2mm}\\

    \frac{1}{T^{{2}/{d}} h^2}+h^2  &{\rm if} \ d>4,
\end{cases}$$
which concludes the proof of \eqref{threebounds} and (i).

To prove (ii),  fix $\epsilon>0$ and a compact subset  $\mathcal{K}$ of $\mathbb{B}^d\setminus \{0\}$. Since~${\mathbf{Q}}_{2|1}(\cdot|x)$ is a homeomorphism between  $\mathbb{B}^d\setminus \{0\}$ and ${\rm int}({\rm supp}({\rm P}_{2|1}(\cdot|x)) )\setminus  \{{\mathbf{Q}}_{2|1}(0|x)\}$ (see \cite{Barrio2023RegularityOC}),   for each $v\in \mathcal{K}$ we can find a ball $v+\alpha\mathbb{B}^d$ with center $v$ and radius~$\alpha>0$ such that 
$$ \mathcal{K}_1^\beta = \overline{\rm coh}\left({\mathbf{Q}}_{2|1}\left(v+\alpha\mathbb{B}^d\bigg\vert x\right) \right)\subset {\rm int}({\rm supp}({\rm P}_{2|1}(\cdot|x)) )\setminus  \{{\mathbf{Q}}_{2|1}(0|x)\}, $$
where $ \overline{\rm coh}(A)$ denotes the closed convex hull of a set $A$. By a compactness argument,~$\mathcal{K}$ can be covered by a finite numbers of such balls; hence, it is enough to establish the result for  one of them. Let $\beta$ be small enough for the  set $\mathcal{K}_1^\beta\coloneqq \{ u: \inf_{z\in \mathcal{K}_1}\|u-z\| \leq~\!\beta\}$, which is compact and convex,  
to be  contained in $ {\rm int}({\rm supp}({\rm P}_{2|1}(\cdot|x)) )\setminus  \{{\mathbf{Q}}_{2|1}(0|x)\}$. Then, letting 
$ \gamma_{T}\coloneqq {T^{-\frac{1}{d}}}+d_{\rm BLC}( {\mu_d^{(k)}},\mu_d)$, 
we get, for every $M>0$,  
\begin{align*}
     \PP&\left( \left|\int_{v+\alpha\mathbb{B}^d}\| \widehat{\mathbf{Q}}_{T} ( u\vert x)- {\mathbf{Q}}_{2|1}( u\vert x) \|^2 {\rm d}{\mu_d^{(k)})}(u)\right|>M \gamma_{T} \right)\\
     &\leq \PP\left(\left( \left|\int_{v+\alpha\mathbb{B}^d}\| \widehat{\mathbf{Q}}_{T} ( u\vert x)- {\mathbf{Q}}_{2|1}( u\vert x) \|^2 
     {\rm d}{{\mu_d^{(k)})}}(u)
     \right|>M \gamma_{T} \right) \cap \mathcal{W}_{T} \right) + \PP\left( \mathcal{W}_{T}^c \right)
\end{align*}
where 
$ \mathcal{W}_{T}$ is the event $\widehat{\mathbf{Q}}_{T}\Big(v+\alpha\mathbb{B}^d\big\vert x\Big) \subset \mathcal{K}_1^\beta.$
By \cref{theorem-consistency}, $\PP\left( \mathcal{W}_{T}^c \right)\to 0$, so that~(ii) follows from (i).
%we conclude by appealing to the first point of this theorem.
\hfill$\square$
%\end{proof}
\end{enumerate}

%\color{purple}

\section{Asymptotic stationarity of the  simulated series in Section~4}

%\color{purple}

\subsection{Case 1}
Let $X_t$ be as in \eqref{case1}.  
 Let 
\begin{equation*}
 G(x, z) \coloneqq \begin{bmatrix}
     \frac{x_1+x_2}{3}\\
     \sqrt{ \frac{\|\bx\|^2+5}{4} }
 \end{bmatrix} + \sin\left( \frac{\pi}{10} \|\bx\|\right) \cdot z,
\end{equation*}
where $(x_1, x_2)$ denotes the coordinates of $x\in \R^2$.   
Fixing ${\varepsilon}\sim \mathcal{N}({0}, {\rm I})$,  decompose
\begin{multline}\label{eq:proof-stationary-example-1-1}
    \EE [\| G(\bx, {\varepsilon})-G(\by, {\varepsilon}) \|^2] 
    = \underbrace{\left\| \bigg( \frac{x_1+x_2}{3} - \frac{y_1+y_2}{3}, \sqrt{ \frac{\|\bx\|^2+5}{4}} - \sqrt{\frac{\|\by\|^2+5}{4}} \bigg)  \right\|^2}_{\eqqcolon M_1} \\   + \underbrace{\left( \sin\left( \frac{\pi}{10} \|\bx\|\right)-\sin\left( \frac{\pi}{10} \|\by\|\right) \right)^2 \EE[\|{\varepsilon}\|^2]}_{\eqqcolon M_2} .
\end{multline}
By the Cauchy–Schwarz and  triangle inequalities, we get 
\begin{align}\label{eq:proof-stationary-example-1-2}
    M_1:=  \frac{1}{9}\left( \langle x-y, (1,1)^\top \rangle  \right)^2 +\left( \frac{ \frac{\|\bx\|^2+5}{4} - \frac{\|\by\|^2+5}{4}}{\sqrt{ \frac{\|\bx\|^2+5}{4}} + \sqrt{\frac{\|\by\|^2+5}{4}} }\right)^2 \leq \frac{17}{36} \|\bx-\by\|^2
\end{align}
while, since  $u\mapsto \sin(u)$ is $1$-Lipschitz,  the triangle inequality yields 
\begin{align}\label{eq:proof-stationary-example-1-3}
M_2&=   2\left(\sin\left( \frac{\pi}{10} \|\bx\|\right)-\sin\left( \frac{\pi}{10} \|\by\|\right) \right)^2 
    \leq \frac{\pi^2}{50} \|\bx-\by\|^2 .
\end{align} 
Combining \eqref{eq:proof-stationary-example-1-1}, \eqref{eq:proof-stationary-example-1-2}, and \eqref{eq:proof-stationary-example-1-3}, we obtain 
$$  \EE [\| G(\bx, {\varepsilon})-G(\by, {\varepsilon}) \|^2] \leq \left( \frac{17}{36}+ \frac{\pi^2}{50} \right),  $$
so that, as  $\frac{17}{36}+ \frac{\pi^2}{50}<1$,  the series $\{X_t\}_t$ is asymptotically stationary (see \cite[Theorem~1.1]{Diaconis.Freedman.SIAM.2000}).  \hfill$\square$
\color{black}

\subsection{Case 2}
%\color{purple}
Let $X_t$, $\varepsilon_t$ and $f$ be as in \eqref{case2}. 
The asymptotic stationarity of the process generated by \eqref{case2} follows from a contraction argument on the Borel map 
\begin{equation*}x\mapsto 
 G(x, z) \coloneqq \begin{bmatrix}
        \tanh\Big(\frac{1}{2}\big(x_1 + x_2\big)\Big) - \frac{1}{2}\\         
        \cos\Big(\frac{\pi}{10} f\big(x_1 + x_2\big) \Big) 
        \end{bmatrix} +\frac{1}{2}\|x\| \cdot z .
\end{equation*}
It is easy to see that 
$$ x\mapsto   \begin{bmatrix}
        \tanh\Big(\frac{1}{2}\big(x_1 + x_2\big)\Big) - \frac{1}{2}\\         
        \cos\Big(\frac{\pi}{10} f\big(x_1 + x_2\big) \Big) 
        \end{bmatrix} $$
is Lipschitz with constant $\frac{25+\pi^2}{50} $. Hence,  
 \begin{align*}
    \EE \left[\big\| G(\bx, {\varepsilon})-G(\by, {\varepsilon}) \big\|^2 \right] \leq \left(\frac{25+\pi^2}{50} +\frac{1}{6}\right) \|x-y\|^2
\end{align*}
and the claim follows again by \cite[Theorem~1.1]{Diaconis.Freedman.SIAM.2000}. \hfill$\square$
\color{black}
\subsection{Case 3} 
%\color{purple}
Let $X_t$ be as in \eqref{case3} and fix
$$ \eps\sim \frac{1}{4}N\big( 0 , \frac{1}{25}{\rm I}\big) + \frac{1}{4}N\big((0.866, -0.5), \frac{1}{25}{\rm I}\big) \\ %+ \nonumber\\
    +\frac{1}{4}N\big((-0.866, -0.5), \frac{1}{25}{\rm I}\big) + \frac{1}{4}N\big((0,1), \frac{1}{25}{\rm I}\big). $$
Let us show that $\{X_t\}_{t}$ is asymptotically stationary when  ${\rm R}(t) $ is a fixed rotation matrix~$R$. Without loss of generality,  assume that ${\rm R}(t) ={\rm I}$. As above, we are  using a contraction argument on 
$$
G(x,z)=\begin{bmatrix}
         \dfrac{\log(\|x\|+2)}{\|x\|+2} \vspace{1mm}
         \\
         \dfrac{\|x\|}{\|x\|+ \sqrt{2}}
        \end{bmatrix} +
        \left(\sqrt{\|X_t\|+1} \right) \, z.%\cdot
$$ 
We have %to show that \textcolor{blue}{the non-rotating version of Case 3 is stationary}. 
\begin{multline*}
   \EE \| G(\bx, {\varepsilon})-G(\by, {\varepsilon}) \|^2 = \underbrace{\left( \frac{\log(\|\bx\|+2)}{\|\bx\|+2} - \frac{\log(\|\by\|+2)}{\|\by\|+2} \right)^2}_{=:M_1''} %\\
  %  &
    + \underbrace{\left( \frac{\|\bx\|}{\|\bx\|+ \sqrt{2}} - \frac{\|\by\|}{\|\by\|+ \sqrt{2}}  \right)^2}_{=:M_2''} \\
    + \underbrace{\left(\sqrt{\|\bx\|+1} - \sqrt{\|\by\|+1} \right)^2 \EE\|{\varepsilon}\|^2}_{=:M_3''}.  
\end{multline*}
To bound $M_1^{\prime\prime}$, let $z_1\coloneqq \|\bx\|+2$ and $z_2\coloneqq \|\by\|+2$, so that $z_1, z_2 \geq 2$. We have
\begin{align}
    \bigg| \frac{\log(\|\bx\|+2)}{\|\bx\|+2} - \frac{\log(\|\by\|+2)}{\|\by\|+2} \bigg| &= \bigg| \frac{\log(z_1)}{z_1} - \frac{\log(z_2)}{z_2} \bigg| \nonumber \\
    &
    = \bigg|\frac{z_2\log z_1 - z_2\log z_2 + z_2\log z_2 - z_1\log z_2}{z_1 z_2} \bigg| \nonumber \\
    &= \bigg|\frac{1}{z_1}(\log z_1 -\log z_2) + \frac{z_2-z_1}{z_1}\frac{\log z_2}{z_2} \bigg| \nonumber \\
    &\leq \frac{1}{4}|z_1-z_2| + \frac{1}{2e}|z_1-z_2| \label{star}\\
    &
    < \frac{1}{2}|z_1-z_2| = \frac{1}{2}\|\bx-\by\| \nonumber 
\end{align}
where inequality \eqref{star} follows from two facts:
\begin{compactenum}
\item[(a)]%\begin{equation}\label{First}
  $  \log z_1 -\log z_2 = \frac{1}{c}(z_1-z_2) \leq \frac{1}{2}(z_1-z_2)$ 
%\end{equation}
for some $z_2<c<z_1$ (assuming, without loss of generality, $z_1 > z_2$)  by the mean-value Theorem and because~$z_1, z_2 > e$; 
%and 
\item[(b)] %Second, 
%\begin{equation*}
  $  0<  {\log z_2}/{z_2} \leq \frac{1}{e} $
%\end{equation*}
because the function $h(t) = \log t/t$ is increasing on $(0, e)$ and decreasing on $(e, +\infty)$. 
\end{compactenum}\medskip 

To bound $M_2^{\prime\prime}$,  let $h(t) \coloneqq {t}/{(t+\sqrt{2})}$: then,   with $z_1\coloneqq \|\bx\|$ and $z_2\coloneqq \|\by\|$,%e have
\begin{equation*}
 \Big(M_2^{\prime\prime}\Big)^{1/2}\! =   | h(z_1) -h(z_2) | \leq \sup_{t>0} |h'(t)| |z_1-z_2| \leq \frac{1}{\sqrt{2}}|z_1-z_2| \leq \frac{1}{\sqrt{2}}\|\bx-\by\|.
\end{equation*}

As for $M_3^{\prime\prime}$, 
\begin{align*}
 \Big(M_3^{\prime\prime}/ \EE\|{\varepsilon}\|^2\Big)^{1/2}\! =   \Big| \sqrt{\|\bx\|+1} - \sqrt{\|\by\|+1} \Big| &\leq \frac{\|\bx\|-\|\by\|}{\sqrt{\|\bx\|+1} + \sqrt{\|\by\|+1}} \leq \frac{1}{2}\|\bx-\by\| .
\end{align*}
Combining these  bounds yields 
\begin{align*}
    \EE \| G(\bx, {\varepsilon})-G(\by, {\varepsilon}) \|^2 & < \frac{1}{4}\|\bx-\by\|^2 + \frac{1}{2}\|\bx-\by\|^2 + \frac{1}{4}\|\bx-\by\|^2 \EE\|{\varepsilon}\|^2 < \|\bx-\by\|^2 
\end{align*}
since, from \eqref{gaussian_mixture}, $\EE\|{\varepsilon}\|^2 \approx 0.83 <1$. Asymptotic stationarity follows.\hfill$\square$

%\newpage

\bibliographystyle{apalike}

%\newpage\bibliographystyle{plainnat}
\bibliography{ref-2.bib}

\begin{thebibliography}{}

\bibitem[Adrian et~al., 2019]{GiannoneQVAR}
Adrian, T., Boyarchenko, N., and Giannone, D. (2019).
\newblock Vulnerable growth.
\newblock {\em American Economic Review}, 109:1263--1269.

\bibitem[Aliprantis and Border, 2006]{InfiniteDimensionalAnalysis_2006}
Aliprantis, C.~D. and Border, K.~C. (2006).
\newblock {\em Infinite-Dimensional Analysis. A Hitchhiker's Guide}.
\newblock Springer-Verlag.

\bibitem[Bang et~al., 2015]{bang2015frontotemporal}
Bang, J., Spina, S., and Miller, B.~L. (2015).
\newblock Frontotemporal dementia.
\newblock {\em The Lancet}, 386:1672--1682.

\bibitem[Beer, 1993]{Beer1993TopologiesOC}
Beer, G. (1993).
\newblock {\em Topologies on Closed and Closed Convex Sets}.
\newblock Springer, Dordrecht.

\bibitem[Birkhoff, 1946]{Birkhoff.46}
Birkhoff, G. (1946).
\newblock Tres observaciones sobre el algebra lineal.
\newblock {\em Univ. Nac. Tucum\'an. Revista A.}, 5:147--151.

\bibitem[Bradley, 2005]{Bradley.Mixing.Survey}
Bradley, R.~C. (2005).
\newblock Basic properties of strong mixing conditions. {A} survey and some
  open questions.
\newblock {\em Probab. Surv.}, 2:107--144.
\newblock Update of, and a supplement to, the 1986 original.

\bibitem[Bronshtein, 1976]{Covering-convex}
Bronshtein, E.~M. (1976).
\newblock {$\varepsilon $}-entropy of convex sets and functions.
\newblock {\em Sibirsk. Mat. \v Z.}, 17:508--514, 715.

\bibitem[Cai, 2002]{Cai_2002}
Cai, Z. (2002).
\newblock Regression quantiles for time series.
\newblock {\em Econometric Theory}, 18:169--192.

\bibitem[Carlier et~al., 2016]{Carlier.Chernoz.Galichon.2016.AoS}
Carlier, G., Chernozhukov, V., and Galichon, A. (2016).
\newblock {Vector quantile regression: an optimal transport approach}.
\newblock {\em Ann. Statist.}, 44:1165 -- 1192.

\bibitem[Chaouch et~al., 2009]{ChaouchSaraccco2009}
Chaouch, M., Gannoun, A., and Saracco, J. (2009).
\newblock Estimation de quantiles g\' eom\' etriques conditionnels et non
  conditionnels.
\newblock {\em Journal de la Soci\' et\' e Fran\c caise de Statistique},
  150:1--27.

\bibitem[Chaudhuri, 1996]{chaudhuri1996geometric}
Chaudhuri, P. (1996).
\newblock On a geometric notion of quantiles for multivariate data.
\newblock {\em Journal of the American Statistical Association}, 91:862--872.

\bibitem[Chavleishvili and Manganelli, 2024]{Manganelli2023}
Chavleishvili, S. and Manganelli, S. (2024).
\newblock Forecasting and stress testing with quantile vector autoregression.
\newblock {\em Journal of Applied Econometrics}, 39:66--85.

\bibitem[Chernozhukov et~al., 2017]{chernoetal17}
Chernozhukov, V., Galichon, A., Hallin, M., and Henry, M. (2017).
\newblock Monge-{K}antorovich depth, quantiles, ranks and signs.
\newblock {\em Ann. Statist.}, 45:223--256.

\bibitem[Cheung et~al., 2024]{cheung2024quantiledeeplearningmodels}
Cheung, J., Rangarajan, S., Maddocks, A., Chen, X., and Chandra, R. (2024).
\newblock Quantile deep learning models for multi-step ahead time series
  prediction.

\bibitem[Chowdhury and Chaudhuri, 2019]{Chowdhury19}
Chowdhury, J. and Chaudhuri, P. (2019).
\newblock Nonparametric depth and quantile regression for functional data.
\newblock {\em Bernoulli}, 25:395--423.

\bibitem[Davis et~al., 2011]{Davis2011}
Davis, R.~A., Lii, K.-S., and Politis, D.~N. (2011).
\newblock {\em Density Estimates and Markov Sequences}, page 239–254.
\newblock Springer New York.

\bibitem[de~Castro et~al., 2023]{Castro.2023.Quantiles.BJ}
de~Castro, L., Costa, B.~N., Galvao, A.~F., and Zubelli, J.~P. (2023).
\newblock Conditional quantiles: an operator-theoretical approach.
\newblock {\em Bernoulli}, 29:2392--2416.

\bibitem[Deb et~al., 2021]{NabarunGhosalSenNips}
Deb, N., Ghosal, P., and Sen, B. (2021).
\newblock Rates of estimation of optimal transport maps using plug-in
  estimators via barycentric projections.
\newblock In Ranzato, M., Beygelzimer, A., Dauphin, Y., Liang, P.~S., and
  Wortman~Vaughan, J., editors, {\em Advances in Neural Information Processing
  Systems}, volume~34, pages 29736--29753. Curran Associates, Inc.

\bibitem[del Barrio and {Gonz{\' a}lez-Sanz}, 2024]{Barrio2023RegularityOC}
del Barrio, E. and {Gonz{\' a}lez-Sanz}, A. (2024).
\newblock Regularity of center-outward distribution functions in non-convex
  domains.
\newblock {\em Adv. Nonlinear Stud.}, 24:880--894.

\bibitem[{del Barrio} et~al., 2020]{DELBARRIO2020104671}
{del Barrio}, E., {Gonz{\' a}lez-Sanz}, A., and Hallin, M. (2020).
\newblock A note on the regularity of optimal-transport-based center-outward
  distribution and quantile functions.
\newblock {\em J. Multivariate Anal.}, 180:104671.

\bibitem[del Barrio et~al., 2024]{Barrio2022NonparametricMC}
del Barrio, E., {Gonz{\'a}lez-Sanz}, A., and Hallin, M. (2024).
\newblock Nonparametric multiple-output center-outward quantile regression.
\newblock {\em Journal of the American Statistical Association}, 120:818--832.

\bibitem[Diaconis and Freedman, 1999]{Diaconis.Freedman.SIAM.2000}
Diaconis, P. and Freedman, D. (1999).
\newblock Iterated random functions.
\newblock {\em SIAM Rev.}, 41:45--76.

\bibitem[Engle and Manganelli, 2004]{CAVIAR}
Engle, R. and Manganelli, S. (2004).
\newblock {CAViaR:} conditional autoregressive value at risk by regression
  quantiles.
\newblock {\em Journal of Business \& Economic Statistics}, 22:367--381.

\bibitem[Fan et~al., 2021]{JianqingFan-etal.JMLR.2021}
Fan, J., Jiang, B., and Sun, Q. (2021).
\newblock Hoeffding's inequality for general markov chains and its applications
  to statistical learning.
\newblock {\em Journal of Machine Learning Research}, 22(139):1--35.

\bibitem[Fan and Yao, 2005]{Fan.2005.Book.TimeSeries}
Fan, J. and Yao, Q. (2005).
\newblock {\em Nonlinear Time Series}.
\newblock Springer Series in Statistics. Springer, New York, NY.

\bibitem[Figalli, 2018]{FIGALLI2018413}
Figalli, A. (2018).
\newblock On the continuity of center-outward distribution and quantile
  functions.
\newblock {\em Nonlinear Anal.}, 177:413--421.

\bibitem[Ghosal and Sen, 2022]{GhosalSenAOS}
Ghosal, P. and Sen, B. (2022).
\newblock {Multivariate ranks and quantiles using optimal transport:
  Consistency, rates and nonparametric testing}.
\newblock {\em Ann. Statist.}, 50:1012--1037.

\bibitem[Giorgino, 2009]{giorgino2009computing}
Giorgino, T. (2009).
\newblock Computing and visualizing dynamic time warping alignments in {R}: the
  dtw package.
\newblock {\em Journal of Statistical Software}, 31:1--24.

\bibitem[Gonz\'{a}lez-Sanz and Sheng,
  2024]{gonzalezsanz2024linearizationmongeampereequationsstatistical}
Gonz\'{a}lez-Sanz, A. and Sheng, S. (2024).
\newblock Linearization of {M}onge-{A}mp\`ere equations and statistical
  applications.
\newblock {\em arXiv:2408.06534}.

\bibitem[Hallin et~al., 2021]{Hallin2020DistributionAQ}
Hallin, M., del Barrio, E., Cuesta-Albertos, J., and Matrán, C. (2021).
\newblock {Distribution and quantile functions, ranks and signs in dimension
  $d$: A measure transportation approach}.
\newblock {\em The Annals of Statistics}, 49:1139--1165.

\bibitem[Hallin et~al., 2022a]{HLLJASA}
Hallin, M., La~Vecchia, D., and Liu, H. (2022a).
\newblock Center-outward {R}-estimation for semiparametric {VARMA} models.
\newblock {\em J. Amer. Statist. Assoc.}, 117:925--938.

\bibitem[Hallin et~al., 2022b]{HLLBern}
Hallin, M., La~Vecchia, D., and Liu, H. (2022b).
\newblock Rank-based testing for semiparametric {VAR} models: a measure
  transportation approach.
\newblock {\em Bernoulli}, 29:229--273.

\bibitem[Hallin and Liu, 2022]{HLPortm}
Hallin, M. and Liu, H. (2022).
\newblock Center-outward rank- and sign-based {VARMA} portmanteau tests:
  {C}hitturi, {H}osking, and {L}i--{M}cleod revisited.
\newblock {\em Econometrics and Statistics, {\rm special issue in honor of
  {E}lvezio {R}onchetti and {P}eter {R}ousseeuw, to appear}. {\rm Available at
  arXiv.2208.12143}}.

\bibitem[Hallin et~al., 2015]{HallinPdSiman2}
Hallin, M., Lu, Z., Paindaveine, D., and {\v{S}}iman, M. (2015).
\newblock Local bilinear multiple-output quantile regression.
\newblock {\em Bernoulli}, 21:1435--1466.

\bibitem[Hallin et~al., 2010]{hallin2010multivariate}
Hallin, M., Paindaveine, D., and {\v{S}}iman, M. (2010).
\newblock Multivariate quantiles and multiple-output regression quantiles: From
  {L}1 optimization to halfspace depth.
\newblock {\em The Annals of Statistics}, 38:635--703.

\bibitem[Hallin and {\v{S}}iman, 2007]{HallinSiman2016}
Hallin, M. and {\v{S}}iman, M. (2007).
\newblock Elliptical multiple-output quantile regression and convex
  optimization.
\newblock {\em Statistics \& Probability Letters}, 109:232--237.

\bibitem[Hallin and {\v{S}}iman, 2017]{HallinSimanHandbook}
Hallin, M. and {\v{S}}iman, M. (2017).
\newblock Multiple-output quantile regression.
\newblock In Chernozhukov, V., He, X., Koenker, R., and Peng, L., editors, {\em
  Handbook of Quantile Regression}, pages 185--208. Chapman and Hall.

\bibitem[Hlubinka and {\v{S}}iman, 2013]{HlubinkaSiman2013}
Hlubinka, D. and {\v{S}}iman, M. (2013).
\newblock On elliptical quantiles in the quantile regression setup.
\newblock {\em Journal of Multivariate Analysis}, 116:163--171.

\bibitem[Hlubinka and {\v{S}}iman, 2015]{HlubinkaSiman2015}
Hlubinka, D. and {\v{S}}iman, M. (2015).
\newblock On generalized elliptical quantiles in the nonlinear quantile
  regression setup.
\newblock {\em TEST}, 24:240--264.

\bibitem[Iacopini et~al., 2023]{BayesQVAR2023}
Iacopini, M., Poon, A., Rossini, L., and Zhu, D. (2023).
\newblock Bayesian mixed-frequency quantile vector autoregression: Eliciting
  tail risks of monthly {US} {GDP}.
\newblock {\em Journal of Economic Dynamics and Control}, 157:104757.

\bibitem[Karlsen and Tj{\o}stheim, 2001]{Karlsen.AoS}
Karlsen, H.~A. and Tj{\o}stheim, D. (2001).
\newblock Nonparametric estimation in null recurrent time series.
\newblock {\em Ann. Statist.}, 29:372--416.

\bibitem[Koenker, 2005]{koenker2005quantile}
Koenker, R. (2005).
\newblock {\em Quantile Regression}.
\newblock Cambridge University Press.

\bibitem[Koenker, 2017]{koenker2017quantile}
Koenker, R. (2017).
\newblock Quantile regression: 40 years on.
\newblock {\em Annual Review of Economics}, 9:155--176.

\bibitem[Koenker and Bassett, 1978]{KoenkerBassett78}
Koenker, R. and Bassett, G. (1978).
\newblock Regression quantiles.
\newblock {\em Econometrica}, 46:33--50.

\bibitem[Koenker et~al., 2017]{Handbook2017}
Koenker, R., Chernozhukov, V., He, X., and Peng, L. (2017).
\newblock {\em Handbook of Quantile Regression}.
\newblock Chapman and Hall/CRC, New York.

\bibitem[Koenker and Xiao, 2006]{koenker2006quantile}
Koenker, R. and Xiao, Z. (2006).
\newblock Quantile autoregression.
\newblock {\em Journal of the American Statistical Association}, 101:980--990.

\bibitem[Koul and Saleh, 1995]{Koul1995}
Koul, H.~L. and Saleh, A. K. M.~E. (1995).
\newblock {Autoregression Quantiles and Related Rank-Scores Processes}.
\newblock {\em The Annals of Statistics}, 23:670--689.

\bibitem[Manole et~al., 2024]{Manole-et-al.AOS-plug-in}
Manole, T., Balakrishnan, S., Niles-Weed, J., and Wasserman, L. (2024).
\newblock Plugin estimation of smooth optimal transport maps.
\newblock {\em Ann. Statist.}, 52:966--998.

\bibitem[McCann, 1995]{McCann}
McCann, R.~J. (1995).
\newblock {Existence and uniqueness of monotone measure-preserving maps}.
\newblock {\em Duke Math. J.}, 80(2):309--323.

\bibitem[Mukherjee, 1999]{Mukherjee1999}
Mukherjee, K. (1999).
\newblock Asymptotics of quantiles and rank scores in nonlinear time series.
\newblock {\em Journal of Time Series Analysis}, 20:173--192.

\bibitem[Petitjean et~al., 2011]{petitjean2011global}
Petitjean, F., Ketterlin, A., and Gan{\c{c}}arski, P. (2011).
\newblock A global averaging method for dynamic time warping, with applications
  to clustering.
\newblock {\em Pattern Recognition}, 44:678--693.

\bibitem[Qu and Yoon, 2015]{QU2015}
Qu, Z. and Yoon, J. (2015).
\newblock Nonparametric estimation and inference on conditional quantile
  processes.
\newblock {\em Journal of Econometrics}, 185:1--19.

\bibitem[Rockafellar, 1970a]{rockafellar1970}
Rockafellar, R.~T. (1970a).
\newblock {\em Convex Analysis}.
\newblock Princeton Mathematical Series. Princeton University Press, Princeton,
  N. J.

\bibitem[Rockafellar, 1970b]{RockafellarMaximalMonot}
Rockafellar, R.~T. (1970b).
\newblock {On the maximal monotonicity of subdifferential mappings.}
\newblock {\em Pacific J. Math.}, 33(1):209--216.

\bibitem[Rockafellar and Wets, 2009]{rockafellar2009variational}
Rockafellar, R.~T. and Wets, R. J.-B. (2009).
\newblock {\em Variational {A}nalysis}.
\newblock Springer Science \& Business Media.

\bibitem[Rosenblatt, 1956]{Rosenblatt.CLT.1956}
Rosenblatt, M. (1956).
\newblock A central limit theorem and a strong mixing condition.
\newblock {\em Proceedings of the National Academy of Sciences of the United
  States of America}, 42:43--47.

\bibitem[Safiri et~al., 2024]{safiri2024alzheimer}
Safiri, S., Ghaffari~Jolfayi, A., Fazlollahi, A., Morsali, S., Sarkesh, A.,
  Daei~Sorkhabi, A., Golabi, B., Aletaha, R., Motlagh~Asghari, K., Hamidi, S.,
  et~al. (2024).
\newblock Alzheimer's disease: a comprehensive review of epidemiology, risk
  factors, symptoms diagnosis, management, caregiving, advanced treatments and
  associated challenges.
\newblock {\em Frontiers in Medicine}, 11:1474043.

\bibitem[Sancetta, 2009]{Sancetta.2009.JMA}
Sancetta, A. (2009).
\newblock Nearest neighbor conditional estimation for {H}arris-recurrent
  {M}arkov chains.
\newblock {\em J. Multivariate Anal.}, 100:2224--2236.

\bibitem[Segers, 2022]{Segers2022GraphicalAU}
Segers, J. (2022).
\newblock Graphical and uniform consistency of estimated optimal transport
  plans.
\newblock {\em arXiv:2208.02508}.

\bibitem[{van der {V}aart} and Wellner, 1996]{Vart_Well}
{van der {V}aart}, A.~W. and Wellner, J.~A. (1996).
\newblock {\em Weak Convergence and Empirical Processes}.
\newblock Springer, New York, NY.

\bibitem[Villani, 2003]{villani2003topics}
Villani, C. (2003).
\newblock {\em Topics in Optimal Transportation}, volume~58 of {\em Graduate
  Studies in Mathematics}.
\newblock American Mathematical Society, Providence, RI.

\bibitem[Wainwright, 2019]{Wainwright_2019}
Wainwright, M.~J. (2019).
\newblock {\em High-Dimensional Statistics: A Non-Asymptotic Viewpoint}.
\newblock Cambridge Series in Statistical and Probabilistic Mathematics.
  Cambridge University Press.

\bibitem[Yakowitz, 1993]{Yakowitz.NN.SPaA}
Yakowitz, S. (1993).
\newblock Nearest neighbor regression estimation for null-recurrent {M}arkov
  time series.
\newblock {\em Stochastic Process. Appl.}, 48:311--318.

\end{thebibliography}

\end{document}